\newtheorem{Th}{Theorem}[section]
\newtheorem{Cor}[Th]{Corollary}
\newtheorem{Lem}[Th]{Lemma}
\newtheorem{Prop}[Th]{Proposition}
\theoremstyle{definition}
\newtheorem{Def}[Th]{Definition}
\newtheorem{Rem}[Th]{Remark}
\newcommand{\Title}[1]{\begin{center}\Large \textbf{#1}\end{center}}
\newcommand{\Author}[3]{\begin{center}\large #1\\ {\small \textit{#2}, #3}\end{center}}
\newcommand\blfootnote[1]{%
	\begingroup
	\renewcommand\thefootnote{}\footnote{#1}%
	\addtocounter{footnote}{-1}%
	\endgroup}
\newcommand{\AMS}[1]{AMS Subject Classification : #1}
\newcommand{\Kex}[1]{Kex words : #1}
\newcommand{\Ack}[1]{\noindent\textbf{\underline{Acknowledgement} :} #1}
\begin{document}

\Title{On Tempered Ultradistributions in Classical Morrey Spaces}
	
	\Author{Anslem Uche Amaonyeiro}{Department of Mathematics, University, Ibadan, Nigeria}{anslemamaonyeiro@uam.edu.ng}
\Author{Murphy Egwe*}{Department of Mathematics, University of Ibadan, Nigeria}{murphy.egwe@ui.edu.ng}*\blfootnote{*Corresponding author.}	
		

\noindent\rule{\textwidth}{1pt}


\begin{abstract}
\noindent We introduce the notion of tempered ultradistributions in classical Morrey spaces by preserving their respective properties. Moreover we investigate some embedding results within the scale of classical Morrey spaces (local Morrey space $\mathcal{L}^{p,\beta}(\mathbb{C}^{n},\mu)$ or global Morrey space $L^{p,\beta}(\mathbb{C}^{n})$ where the underlying functions $\mu$ satisfy the growth condition by polxnomials. Finally we present some results on the new classical Morrey spaces described by tempered ultradistributions satisfying both the vanishing and slow growth conditions in form of embedding theorems.

\end{abstract}

\AMS{46F05. 46F10. 46E10. 42B35. 46E30}

\Kex{Morrey space. Local Morrey space. Global Morrey space. Tempered ultradistributions. Embeddings}

\section{Introduction}
\noindent The notion of well-known Morrey spaces was introduced bx C. Morrey in 1938 \cite{18} as a result of the study of partial differential equations involving the local behaviour of the solutions to nonlinear elliptic equations. Several works have been performed involving the structural properties of function theory in these spaces, refer to \cite{1*} for further discussion. Functions described in these spaces enjoy nice properties than those in $L^p$-spaces. Such properties could include the boundedness property as established by some functions like $\vert\xi\vert^{\alpha}$ for $\alpha=-n/p$ even though Morrey spaces are non-separable. Further studies on Morrey spaces were later developed by Campanato \cite{9}, Peetre \cite{20} and Brudnyi \cite{8}. In \cite{2}, C. Morrey considered the theory of the integral average over a ball with a certain growth condition.\\

\noindent In \cite{29} some set of functions in Morrey space with continuous translation in its Morrey norm play a significant impact in approximating or describing functions in the Morrey spaces. This concept was also discussed in \cite{11} and \cite{15}. Some inclusions of Morrey spaces were carried out.\\

\noindent The concept of generalized functions caused a significant approach in the research \cite{27}, and this theory has been developed over the years. The notion of ultradistributions as a new class of generalized functions are useful in many areas including different areas of analysis, quantum field, time-frequency analysis and many others. The extended form of generalized functions called ultradistributions has become one of the top analysis areas of mathematics. These spaces were developed to solve smooth and non-smooth problems in the theory of partial differential equations as they also exhibit additional property of Taylor series expansion. The theory of tempered ultradistributions plays a significant role in analysis. For details on tempered ultradistributions and their properties, see details in \cite{3} and \cite{5}. \\

\noindent This work carries out a survey on ultradistributions with polynomial growth called tempered ultradistributions in classical Morrey spaces with emphasis on the parameters. Some new Morrey spaces with slow growth conditions are introduced. Some known subspaces or pre-duals of Morrey spaces with vanishing properties are discussed. These properties combined with the slow growth conditions allow us to show all elements in the Morrey spaces may be approximated by tempered ultradistributions in the Morrey norm which is more general than tempered ultradistributions. For details on some of the subspaces of Morrey spaces, we refer to \cite{4}. \\

\noindent We organize this paper as follows: section 2 gives some insights on some notations and preliminaries concerning Morrey spaces and tempered ultradistribution spaces. Section 3 dwells on the new Morrey spaces and their properties including the translation invariancy with respect to convolution, and continuous embeddings. Section 4 discusses some main results surrounding the continuous inclusions or embedding.

\section{Preliminaries}
We begin with some definitions which shall be needed in the sequel.
\subsection{Classical Morrey Space}
\begin{Def}
\label{Def1}
Let $p\in \mathbb{Z}^{+}$, $1\leq p<\infty$ and $0\leq \beta\leq n$, $n\in\mathbb{N}$. We define the global classical Morrey space (also known as the homogeneous Morrey space), denoted by $L^{p,\beta}$, as
\begin{equation}
\label{eqn2.1**}
L^{p,\beta}(\Omega):=\Big\{ f\in L_{\text{loc}}^{p}:\sup_{y\in\Omega,r>0}\Big(\frac{1}{r^\beta}\int_{\bar{B}(y,r)}\vert f(x)\vert^{p}dx\Big)^{\frac{1}{p}}<\infty\Big\}
\end{equation}
where $L_{\text{loc}}^{p}$ denotes the space of all locally $p$-th integrable functions, $\bar{B}(x,r)=B(x,r)\cap\Omega$ for an open subset $\Omega$ of $n$-dimensional real or complex plane.
\end{Def}
 In a similar fashion we define the local version of the Morrey space.
\begin{Def}
\label{Def2}
Let $p\in [1,\infty)$ and $\beta\in[0,n)$. We define the local classical Morrey space (the inhomogeneous type), denoted $\mathcal{L}^{p,\beta}$ of $L^{p,\beta}$, as
\begin{equation}
\label{eqn2.1***}
\mathcal{L}^{p,\beta}(\Omega):=\Big\{ f\in L_{\text{loc}}^{p}:\sup_{y\in\Omega,r>0}\Big(\frac{1}{r^\beta}\int_{\bar{B}(y,r)}\vert f(x)\vert^{p}dx\Big)^{\frac{1}{p}}<\infty\Big\}
\end{equation}
\end{Def}
These spaces defined in (\ref{eqn2.1**}) and (\ref{eqn2.1***}) are also called complete normed spaces (also known as Banach spaces) equipped with the respective norms

\begin{equation}
\label{eqn2.3}
\Vert f\Vert_{L^{p,\beta}(\Omega)}:=\sup_{y\in\Omega;r>0}\Big(r^{-\beta}\int_{\bar{B}(y,r)}\vert f(x)\vert^{p}dx\Big)^{\frac{1}{p}}
\end{equation}

\begin{equation}
\label{eqn2.3*}
\Vert f\Vert_{\mathcal{L}^{p,\beta}(\Omega)}:=\sup_{y\in\Omega;r>0}\Big(\frac{1}{r^\beta}\int_{\bar{B}(y,r)}\vert f(x)\vert^{p}dx\Big)^{\frac{1}{p}}
\end{equation}
We write $\Vert\cdot\Vert_{p,\beta}$ for short to denote the Morrey norms defined in (\ref{eqn2.3}) and (\ref{eqn2.3*}) when it is clear which of the versions will be considered in each situation. In the subsequent definitions, we will use the complex plane $\mathbb{C}^n$. We can as well make reference to the parameter $\beta$.

\begin{Rem}
\label{rem*}
\begin{itemize}
\item[(i)] The parameters $p$ and $\beta$ play major role in the notion of classical Morrey space. We observed that $``p''$ plays the role of local integrability since $L^{p,\beta}$ is a subset of $L^{p}_{\text{loc}}$, and parameter $\beta$ seems to be the dilation index.
\item[(ii)] S. Campanato \cite{9} defined the Morrey space $L^{p,\beta}$ such that the following norm is finite:
\begin{equation}
\label{eqn5}
\Vert f\Vert_{L^{p,\beta}}=\Big(\sup_{(x,y)\in\mathbb{R}^{n}\times\mathbb{R}}r^{\beta-n}\int_{B(x,r)}\vert f(y)-f_{r}(x)\vert^{p}dy\Big)^{\frac{1}{p}}
\end{equation}
He observed from (\ref{eqn5}) that if $\beta=n$ then $L^{p,\beta}$ coincides with $L^p$; if $0<\beta<n$ then the local version $\mathcal{L}^{p,\beta}$ equals the global version $L^{p,\beta}$; if $\beta=0$, then $\mathcal{L}^{p,\beta}$ represents the space of all functions with bounded mean oscillation (BMO); and if $-p<\beta<0$, then $\mathcal{L}^{p,\beta}$ represents the space $C^{\alpha}$-the space of all functions satisfying the H$\ddot{\text{o}}$lder inequality with exponent $\alpha=-\beta/p$.
\item[(iii)]From definitions (\ref{Def1}) and (\ref{Def2}), Morrey spaces can be viewed as generalization of the $L^{p}$-spaces, and they constitute the subspaces of Morrey-Campanato spaces, H$\ddot{\text{o}}$lder spaces and the space BMO.
\item[(iv)] Few observations have been drawn from the classical Morrey spaces which can be highlighted as below:\\
Given the two versions $L^{p,\beta}$ and $\mathcal{L}^{p,\beta}$. If $\beta$ is strictly larger than the dimension $n$ of the underlying Euclidean space, that is, $\beta>n$, then
\[\mathcal{L}^{p,\beta}=L^{p,\beta}=\lbrace 0\rbrace\]
Again we have some following observations. For $\beta=n$, then $L^{p,n}(\Omega)=L^{\infty}(\Omega)=\mathcal{L}^{p,n}(\Omega)$ and for $\beta=0$, it can be written as $\mathcal{L}^{p,0}(\Omega)=\mathcal{L}^p$ and $L^{p,0}=L^{p}$.\\
Some continuous embedding can be derived as
\[L^{p,\beta}(\Omega)\hookrightarrow \mathcal{L}^{\beta}(\Omega)\quad\text{and}\quad L^{\infty}(\Omega)\hookrightarrow \mathcal{L}^{p,\beta}(\Omega).\]
\end{itemize}
\end{Rem}

Consider the following result.

\begin{Th}
\label{thmp}
Let $1\leq p\leq p'<\infty$ and $1\leq \beta,u\leq n$ be given. Then we have the continuous embedding
\begin{equation}
\label{eqnp}
L^{p',u}(\Omega)\hookrightarrow L^{p,\beta}
\end{equation}
\end{Th}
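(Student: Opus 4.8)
The plan is to reduce the embedding to a single estimate on an arbitrary ball and then take a supremum, the only tools being Hölder's inequality and the elementary volume bound for balls. Fix $f \in L^{p',u}(\Omega)$, a center $y \in \Omega$, and a radius $r > 0$. The goal is to dominate the local $L^{p}$-average that enters $\Vert f\Vert_{L^{p,\beta}}$ by the local $L^{p'}$-average that is already controlled by $\Vert f\Vert_{L^{p',u}}$.

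First, since $p \le p'$ the ratio $p'/p \ge 1$ is an admissible Hölder exponent. Writing $|f|^{p} = |f|^{p}\cdot 1$ and applying Hölder's inequality on $\bar B(y,r)$ with the conjugate pair $p'/p$ and $p'/(p'-p)$ gives
\[
\int_{\bar B(y,r)}|f(x)|^{p}\,dx \le \Big(\int_{\bar B(y,r)}|f(x)|^{p'}\,dx\Big)^{p/p'}\,\big|\bar B(y,r)\big|^{\,1-p/p'}.
\]
I would then estimate the volume factor by $\big|\bar B(y,r)\big| \le |B(y,r)| = c_{n} r^{n}$, and bound the first factor through the very definition of the $L^{p',u}$-norm in the form $\int_{\bar B(y,r)}|f|^{p'} \le r^{u}\,\Vert f\Vert_{L^{p',u}}^{p'}$.

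Combining these, multiplying by $r^{-\beta}$, and extracting the $p$-th root produces
\[
\Big(r^{-\beta}\int_{\bar B(y,r)}|f|^{p}\,dx\Big)^{1/p} \le c_{n}^{\,\frac{1}{p}-\frac{1}{p'}}\; r^{\frac{1}{p}\left(n-\beta-\frac{p}{p'}(n-u)\right)}\;\Vert f\Vert_{L^{p',u}}.
\]
Taking the supremum over $y\in\Omega$ and $r>0$ on the left produces $\Vert f\Vert_{L^{p,\beta}}$, and the right-hand side is a constant multiple of $\Vert f\Vert_{L^{p',u}}$ as soon as the displayed power of $r$ is independent of $r$.

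The main obstacle is precisely this last bookkeeping of the powers of $r$. The supremum over all $r>0$ is finite exactly when the bracket $n-\beta-\tfrac{p}{p'}(n-u)$ vanishes, that is, under the scaling compatibility relation $\tfrac{n-\beta}{p}=\tfrac{n-u}{p'}$; this is the natural hypothesis that should accompany the stated ranges of $p,p',\beta,u$, and under it one obtains the continuous embedding with the explicit constant, namely $\Vert f\Vert_{L^{p,\beta}}\le c_{n}^{\,1/p-1/p'}\Vert f\Vert_{L^{p',u}}$. If instead $\Omega$ is bounded, so that $r\le\operatorname{diam}\Omega$, only the small-$r$ regime must be watched and it suffices that the exponent be nonnegative, i.e. $\tfrac{n-\beta}{p}\ge\tfrac{n-u}{p'}$. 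The Hölder step and the volume estimate are routine; the sole delicate point is this matching of the dilation indices, which governs whether the embedding is global or merely local.
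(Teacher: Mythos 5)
Your proposal is correct and follows the same route as the paper: the paper's proof is a one-line appeal to H\"older's inequality under the side condition $(\beta-n)p'=p(u-n)$, and your argument is exactly that H\"older-plus-volume computation carried out in full, arriving at the identical scaling relation $\tfrac{n-\beta}{p}=\tfrac{n-u}{p'}$. You rightly observe that this relation (omitted from the theorem's statement but invoked in the paper's proof) is indispensable for the supremum over all $r>0$ to be finite, and your remark that a bounded $\Omega$ only requires the inequality $\tfrac{n-\beta}{p}\ge\tfrac{n-u}{p'}$ is a correct refinement consistent with Corollary \ref{corq}.
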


\begin{proof}
An application of the H$\ddot{\text{o}}$lder inequality (see Theorem \ref{th5.6}) yields (\ref{eqnp}) under the condition $(\beta-n)p'=p(u-n)$.
\end{proof}

\begin{Cor}
\label{corq}
Let $p,p'<\infty$, $p,p'\geq 1$, $p\leq p'$ and $1\leq \beta,u\leq n$. Assume that $\vert\Omega\vert=\infty$. Then we have
\[\mathcal{L}^{p',u}(\Omega)\hookrightarrow \mathcal{L}^{p,\beta}(\Omega)\]
with the following condition $\displaystyle (\beta-n)p'\leq (u-n)p$.
\end{Cor}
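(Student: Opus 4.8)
The plan is to deduce the inclusion from a single scale-by-scale estimate together with H\"older's inequality, mirroring the proof of Theorem \ref{thmp} but keeping track of the inequality between the exponents rather than forcing an equality. Fix $f\in\mathcal{L}^{p',u}(\Omega)$, a centre $y\in\Omega$ and a radius $r>0$. On the truncated ball $\bar B(y,r)=B(y,r)\cap\Omega$ I would write $|f|^{p}=|f|^{p}\cdot 1$ and apply the H\"older inequality (Theorem \ref{th5.6}) with the conjugate pair $\tfrac{p'}{p}$ and $\tfrac{p'}{p'-p}$, which is legitimate precisely because $p\le p'$. This produces
\[
\int_{\bar B(y,r)}|f(x)|^{p}\,dx\le\Big(\int_{\bar B(y,r)}|f(x)|^{p'}\,dx\Big)^{p/p'}\,\big|\bar B(y,r)\big|^{(p'-p)/p'}.
\]

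Next I would control the two factors separately. For the volume factor I use $\bar B(y,r)\subseteq B(y,r)$ and hence $|\bar B(y,r)|\le c_{n}r^{n}$, where $c_{n}$ is the volume of the unit ball; here the hypothesis $|\Omega|=\infty$ ensures that balls are not truncated by the domain and that $c_{n}r^{n}$ is the operative bound at every scale. For the remaining integral I insert the normalising weight, writing $\big(\int_{\bar B(y,r)}|f|^{p'}\big)^{1/p'}=r^{u/p'}\big(r^{-u}\int_{\bar B(y,r)}|f|^{p'}\big)^{1/p'}\le r^{u/p'}\,\|f\|_{\mathcal{L}^{p',u}(\Omega)}$. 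Substituting both bounds, dividing by $r^{\beta}$ and taking the $p$-th root gives
\[
\Big(\frac{1}{r^{\beta}}\int_{\bar B(y,r)}|f(x)|^{p}\,dx\Big)^{1/p}\le c_{n}^{(p'-p)/(pp')}\,r^{\theta}\,\|f\|_{\mathcal{L}^{p',u}(\Omega)},\qquad \theta=\frac{(n-\beta)p'+(u-n)p}{pp'}.
\]

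The whole point of the exponent condition now appears: the assumption $(\beta-n)p'\le(u-n)p$ is equivalent to $\theta\ge0$. For the inhomogeneous norm, where the supremum is effectively taken over balls of bounded radius, $\theta\ge0$ forces $r^{\theta}$ to stay bounded, so taking the supremum over $y$ and $r$ yields $\|f\|_{\mathcal{L}^{p,\beta}(\Omega)}\le C\,\|f\|_{\mathcal{L}^{p',u}(\Omega)}$ with $C$ depending only on $n,p,p'$, which is exactly the continuity of the embedding. I would contrast this with Theorem \ref{thmp}, where the homogeneous supremum runs over all $r>0$ and therefore forces the borderline equality $\theta=0$.

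The main obstacle I anticipate is bookkeeping the power of $r$ correctly and reconciling it with the quantifier on the radius: the estimate above is uniform in $y$, but its dependence on $r$ is governed entirely by the sign of $\theta$, so the argument only closes once one is careful that the inequality $(\beta-n)p'\le(u-n)p$ lands $\theta$ on the side that keeps $\sup_{r}r^{\theta}$ finite. Verifying the admissibility of the H\"older exponents in the degenerate case $p=p'$ (where the second factor disappears and the estimate reduces directly to $r^{(u-\beta)/p}$) and checking that $\bar B(y,r)$ has finite measure so that each average is well defined are routine points to dispatch along the way.
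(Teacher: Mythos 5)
Your argument is correct and is, in substance, the proof the paper intends but never writes down: Corollary~\ref{corq} is given no proof at all, and the parent Theorem~\ref{thmp} is proved by a one-line appeal to H\"older's inequality under the equality condition $(\beta-n)p'=p(u-n)$. You supply the actual computation — H\"older with exponents $p'/p$ and $p'/(p'-p)$, the volume bound $|\bar{B}(y,r)|\le c_n r^n$, and the normalisation by $r^{u/p'}$ — and you correctly identify that relaxing the equality to $(\beta-n)p'\le (u-n)p$ is exactly the statement $\theta\ge 0$ for your exponent $\theta=\frac{(n-\beta)p'+(u-n)p}{pp'}$. So you have done more than the paper, not less.

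One substantive caveat, though, at your closing step. You finish by saying the inhomogeneous supremum is ``effectively taken over balls of bounded radius,'' but the paper's own Definitions~\ref{Def1} and~\ref{Def2} are verbatim identical: both take $\sup_{y\in\Omega,\,r>0}$ with no restriction on $r$. Under that literal reading your factor $r^{\theta}$ is unbounded as $r\to\infty$ whenever $\theta>0$, the estimate does not close, and in fact the statement itself fails: applying the claimed embedding to the dilates $f(s\cdot)$ and invoking the scaling law \eqref{eqnref} gives $\Vert f\Vert_{p,\beta}\le C\,s^{\theta}\Vert f\Vert_{p',u}$ for every $s>0$, which for $\theta>0$ forces $f=0$ upon letting $s\to 0^{+}$. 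So the corollary is tenable only under the standard convention (which the paper does not state but which you implicitly adopt) that the local norm restricts the radii to $0<r\le l$ for some fixed $l$; with that convention your proof is complete, and the hypothesis $|\Omega|=\infty$ serves merely to separate this statement from the bounded-domain situation of Corollary~\ref{corr}. Two minor slips: $|\Omega|=\infty$ does not ensure balls are untruncated (a half-space has infinite measure), though this is harmless since $|\bar{B}(y,r)|\le c_nr^n$ holds unconditionally; and the H\"older inequality you cite following the paper as Theorem~\ref{th5.6} is actually Theorem~\ref{thapp2} in the appendix — the paper's cross-reference is itself erroneous.
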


\begin{Cor}
\label{corr}
From Theorem \ref{thmp}, assume that the domain $\Omega$ is bounded such that $\vert\Omega\vert<\infty$. Then we obtain the embedding
\[L^{\infty}(\Omega)\hookrightarrow L^{p,\beta}(\Omega)\hookrightarrow L^{p}(\Omega).\]
\end{Cor}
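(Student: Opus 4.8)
The plan is to establish the two inclusions in the chain separately, in each case producing an explicit comparison constant, and to exploit the hypothesis $|\Omega|<\infty$ decisively — indeed neither embedding survives on an unbounded domain. Throughout, the essential structural feature I would use is that the balls appearing in the Morrey seminorm are truncated to $\Omega$, that is $\bar B(y,r)=B(y,r)\cap\Omega$, so that their measure never exceeds $|\Omega|$. I keep the standing assumption $0\le\beta\le n$ (in particular $1\le\beta\le n$ as inherited from Theorem \ref{thmp}), which supplies the two sign conditions $n-\beta\ge 0$ and $\beta\ge 0$ that drive the estimates.

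For the left embedding $L^{\infty}(\Omega)\hookrightarrow L^{p,\beta}(\Omega)$, I would start from $f\in L^{\infty}(\Omega)$ and bound the inner integral crudely by $\int_{\bar B(y,r)}|f|^{p}\,dx\le \|f\|_{\infty}^{p}\,|\bar B(y,r)|$, which reduces matters to showing that $\sup_{y,r}\,r^{-\beta}|\bar B(y,r)|$ is finite. Here I would split the supremum at the crossover radius $r_{*}:=(|\Omega|/\omega_{n})^{1/n}$, where $\omega_{n}$ denotes the volume of the unit ball. For $r\le r_{*}$ the elementary bound $|\bar B(y,r)|\le \omega_{n}r^{n}$ together with $n-\beta\ge 0$ gives $r^{-\beta}|\bar B(y,r)|\le \omega_{n}r^{\,n-\beta}\le \omega_{n}r_{*}^{\,n-\beta}$, while for $r\ge r_{*}$ the bound $|\bar B(y,r)|\le|\Omega|=\omega_{n}r_{*}^{n}$ together with $\beta\ge 0$ gives $r^{-\beta}|\bar B(y,r)|\le \omega_{n}r_{*}^{n}r^{-\beta}\le \omega_{n}r_{*}^{\,n-\beta}$. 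Both regimes yield the same uniform bound, so $\|f\|_{L^{p,\beta}(\Omega)}\le C\|f\|_{\infty}$ with $C=(\omega_{n}r_{*}^{\,n-\beta})^{1/p}$ depending only on $n,\beta,p$ and $|\Omega|$.

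For the right embedding $L^{p,\beta}(\Omega)\hookrightarrow L^{p}(\Omega)$, I would fix an arbitrary point $y_{0}\in\Omega$ and a radius $R$ large enough that $\Omega\subseteq B(y_{0},R)$ — which is possible precisely because $\Omega$ is bounded — so that $\bar B(y_{0},R)=\Omega$. Testing the defining supremum at this single admissible pair $(y_{0},R)$ then yields $R^{-\beta}\int_{\Omega}|f|^{p}\,dx\le \|f\|_{L^{p,\beta}(\Omega)}^{p}$, hence $\|f\|_{L^{p}(\Omega)}\le R^{\beta/p}\|f\|_{L^{p,\beta}(\Omega)}$. Composing the two norm inequalities then delivers the full chain $L^{\infty}(\Omega)\hookrightarrow L^{p,\beta}(\Omega)\hookrightarrow L^{p}(\Omega)$.

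I expect the only genuine obstacle to lie in the left embedding: the supremum there runs over all $r>0$, and without the finiteness of $|\Omega|$ the factor $r^{\,n-\beta}$ would blow up as $r\to\infty$ whenever $\beta<n$. The two-regime split at $r_{*}$ is exactly what converts the finite measure of $\Omega$ into a finite Morrey norm, and is the step where the hypothesis $|\Omega|<\infty$ is genuinely consumed. The right embedding, by contrast, is essentially immediate once one observes that a single sufficiently large truncated ball already exhausts $\Omega$, so no supremum estimate is required there.
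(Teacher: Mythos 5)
Your proposal is correct, and it is in fact more complete than what the paper offers: the paper states this corollary without any proof of its own, presenting it as a consequence of Theorem \ref{thmp}, whose proof is itself a one-line appeal to H\"older's inequality under the parameter relation $(\beta-n)p'=p(u-n)$. Note that Theorem \ref{thmp} cannot literally yield the corollary, since $L^{p}=L^{p,0}$ and $L^{\infty}$ sit outside the stated parameter range $1\le\beta,u\le n$, $p'<\infty$, so a direct argument like yours is genuinely needed. Your route is elementary and quantitative where the paper's is schematic: the two-regime split at the crossover radius $r_{*}=(|\Omega|/\omega_{n})^{1/n}$ gives the explicit constant $\bigl(\omega_{n}^{\beta/n}|\Omega|^{(n-\beta)/n}\bigr)^{1/p}$ for the left embedding, and testing the Morrey supremum at a single pair $(y_{0},R)$ with $\Omega\subseteq B(y_{0},R)$ gives the constant $R^{\beta/p}$ for the right one; moreover your proof isolates exactly where each hypothesis is consumed ($|\Omega|<\infty$ on the left, boundedness of $\Omega$ on the right), which the paper leaves implicit. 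One minor caveat in your commentary, not affecting the proof: the assertion that the left embedding never survives on an unbounded domain is accurate only for $\beta<n$; in the endpoint case $\beta=n$ one has $L^{p,n}=L^{\infty}$ (as the paper itself records in Remark \ref{rem*}), so $L^{\infty}\hookrightarrow L^{p,n}$ holds on any domain, your own estimate $r^{-\beta}|\bar B(y,r)|\le\omega_{n}r^{\,n-\beta}=\omega_{n}$ then being uniform without any use of $|\Omega|$.
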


\begin{Rem}
Some calculations involving Morrey norm establishes the following property from the homogeneous Morrey space:
\begin{equation}
\label{eqnref}
\Vert g(s\cdot)\Vert_{L^{p,\beta}(\Omega)}\equiv s^{\frac{\beta-n}{p}}\Vert g\Vert_{L^{p,\beta}(\Omega)},\quad s>0.
\end{equation}
Also the application of the Young's convolution inequality is true for homogeneous Morrey space $L^{p,\beta}(\mathbb{R}^{n})$:
\begin{equation}
\label{eqnref*}
\Vert f*g\Vert_{p,\beta}\leq \Vert g\Vert_{1}\Vert f\Vert_{p,\beta},\quad 0\leq \beta\leq n,\quad 1\leq p,\infty.
\end{equation}
Again with condition $1<p<\infty$, $0<\beta<n$, we have the following chain of continuous embedding
\begin{equation}
\label{eqnS}
\mathcal{S}\hookrightarrow L^{\frac{pn}{n-\beta}}\hookrightarrow \mathcal{L}^{p,\beta}\hookrightarrow\mathcal{S}'
\end{equation}
where $\mathcal{S}$ denotes the space of all real-valued rapidly decreasing functions and $\mathcal{S}'$ denotes the space of all tempered distributions.\\
it is observed that if $\beta>0$ then the Morrey spaces are non-separable. Also, the usual classes of smooth functions like $C^{\infty}_{0}$ or $\mathcal{S}$ has its closure in $L^{p,\beta}$ and $\mathcal{L}^{p,\beta}$ but the space $C^{\infty}_{0}$ is dense in the dual of $L^{p,\beta}$, denoted $(L^{p,\beta})^{*}$, meaning that $(L^{p,\beta})^{*}$ denotes the closure of all $C_{0}^{\infty}$-functions in Morrey norm. Recall that
\begin{equation}
\label{eqn7}
\Big( (L^{p,\beta})^{*} \Big)''=\Big( (L^{p,\beta})^{*'} \Big)'=L^{p,\beta}\quad\text{for}\quad 1<p<\infty, 0\leq \beta<n.
\end{equation}
\end{Rem}

In what follows we introduce some of the duals and preduals of the classical Morrey spaces.

\subsection{Some Known Subspaces}
There is always a need to introduce some known subspaces of Morrey spaces where certain nice properties will hold, like approximation to the identity in the Morrey space does not hold due to the involved function with singularities (example, the homogeneous functions).\\
We will define the vanishing types of Morrey spaces at infinity and at the origin. We have a space called the vanishing at origin Morrey space, denoted $L^{p,\beta}_{0}(\Omega)$ as a predual to $L^{p,\beta}(\Omega)$ consisting of all functions such that
\begin{equation}
\label{eqnvo}
\lim_{r\to 0}\sup_{y\in\Omega}r^{-\beta}\int_{B(y,r)}\vert f(x)\vert^{p}dx=0
\end{equation}
Similarly the condition (\ref{eqnvo}) can be written as
\begin{equation}
\label{eqnvo*}
\sup_{y\in\Omega}r^{-\beta}\int_{B(y,r)}\vert f(x)\vert^{p}dx<\varepsilon\quad\forall\quad r>r_{0}.
\end{equation}
In \cite{11} Chiarenza and Franciosi described the preduals or subspaces satisfying (\ref{eqnvo}) or (\ref{eqnvo*}) as the closed subspace of $L^{p,\beta}$. \\
Recall that
\[\lim_{r\to 0}\sup_{y\in\Omega}r^{-\beta}\int_{B(y,r)}\vert f(x)\vert^{p}dx=0\Leftrightarrow \lim_{r\to 0}\sup_{y\in\Omega;0<r\leq t}r^{-\beta}\int_{B(y,r)}\vert f(x)\vert^{p}dx=0\]
\begin{Rem}
\label{rem2}
Recall that all $L^p$-functions posses some vanishing property where we have that $L_{0}^{p,\beta}=L^p$. However $L_{0}^{p,\beta}$ becomes a proper subset of $L^{p,\beta}$ if $\beta>0$. One of the important features of the above defined subspace not enjoyed by all functions in the Morrey space $L^{p,\beta}(\Omega)$, is the existence of approximation done by bounded functions majorly on bounded domains, as referenced in \cite{11}. This establishes the fact that $L_{0}^{p,\beta}$ is expressed by bounded functions provided the domain is bounded.
\end{Rem}
 \ \\
We also introduced another class of subset of $L^{p,\beta}(\Omega)$ denoted by $\mathbb{L}^{p,\beta}(\Omega)$ introduced in \cite{29}, consisting of all functions $f\in L^{p,\beta}(\Omega)$ whose translation is continuous, that is,
\begin{equation}
\label{eqn2.7}
\mathbb{L}^{p,\beta}(\Omega):=\Big\{\mu \in L^{p,\beta}(\Omega): \Vert\tau_{\eta}\mu-\mu\Vert_{p,\beta}\to 0\quad\text{as}\quad \eta\to 0\Big\}
\end{equation}
where $\tau_{\eta}\mu=\mu(\cdot-\eta), \eta\in\mathbb{R}^n$ (setting $\mu(x)=0$ for $\mathbb{R}^{n}\setminus\Omega$).\\
The space above defined in (\ref{eqn2.7}) is a closed subspace of $L^{p,\beta}$ which is approximated by mollifiers as seen in \cite{29}. Applying similar limiting case when $\beta=0$, the equality is given by $\mathbb{L}^{p,0}(\Omega)=L^{p,0}_{0}(\Omega)=L^{p}(\Omega)$. Let $\beta>0$, the following subspaces are related as follow:
\begin{equation}
\label{eqn2.8}
\mathbb{L}^{p,\beta}(\Omega)\subset L^{p,\beta}_{0}(\Omega) \subset L^{p,\beta}(\Omega)
\end{equation}
Moreover, taking into account for $\vert \Omega\vert<\infty$ (when the domain $\Omega$ is bounded), then
\[\mathbb{L}^{p,\beta}(\Omega)=L^{p,\beta}_{0}(\Omega)\]
The inclusions in (\ref{eqn2.8}) are strict if the underlying domain is unbounded.\\
We introduce another subspace of Morrey spaces satisfying the vanishing condition at infinity as contained in \cite{26*}. This predual is related to the previously defined subspace. Briefly we define another class of subspaces below with some known results.

\begin{Def}
\label{Def3.1}
Let $\beta\in[0,n)$ and $p\in [1,\infty)$. We denote by $L_{\infty}^{p,\infty}$ as a subspace of $L^{p,\beta}(\Omega)$ consisting of all functions $\mu$ such that the following condition is satisfied:
\begin{equation}
\label{eqnv8}
\lim_{r\to\infty}\sup_{y\in\mathbb{R}^{n}}\frac{1}{r^{\beta}}\int_{B(y,r)}\vert f(x)\vert^{p}dx=0
\end{equation}
The space space $L^{p,\beta}_{0}$ is same as the space $L^{p,\beta}_{\infty}(\Omega)$ defined in (\ref{eqnv8}) with a similar condition at infinity for $L^{p,\beta}_{\infty}(\Omega)$.
\end{Def}
\ \\
Another class of subspaces of $L^{p,\beta}$ as a special type of $L_{\infty}^{p,\beta}$ with an additional property of truncation in large balls is introduced, denoted by $L_{\infty,*}^{p,\beta}$.

\begin{Def}
\label{Def3.2}
Let $0\leq \beta<n$ and $p\in [1,\infty)$. The space $L_{\infty,*}^{p,\beta}$ is defined a space consisting of all vanishing functions $f$ such that the following property is fulfilled:
\begin{equation}
\label{eqnv*}
\int_{B(y,r)}\vert f(x)\vert^{p}\chi_{N}(x)dx\leq r^{-\beta}A<\varepsilon\quad\forall\quad N,r>N_{0}\quad\text{and}\quad A\quad\text{is the supremum}
\end{equation}
The above can also written as
\begin{equation}
\label{eqn}
\lim_{r,N\to\infty}\sup_{y\in\mathbb{R}^{n}}\int_{B(y,r)}\vert f(x)\vert^{p}\chi_{N}(x)dx=0
\end{equation}
\end{Def}

From definition \ref{Def3.2}, the truncation in (\ref{eqnv*}) makes it easier for functions to be defined in the uniform Lebesgue space $\mathcal{L}^{p}$ for $1\leq p<\infty$. When functions are not necessarily in the $L^p$-space, the Lebesgue dominated convergence theorem enables us in overcoming difficulties through this theorem.

\subsection{The Spaces $\mathcal{U}$ and $\mathcal{U}'$}
We begin this section with a brief description concerning the spaces $\mathcal{U}$ and $\mathcal{U}'$. We have different versions of generalized functions of great interest, one of which is the ultradistributions of polynomial growth termed the tempered ultradistributions. We adopt the notations used for spaces of rapidly decreasing infinite differentiable functions, spaces of tempered distributions, and the spaces of all distributions with compact support in \cite{28}.
\begin{Def}
\label{defultra}
We define the space $\mathcal{U}(\mathbb{C}^{n})$ as the space of all rapidly decreasing entire functions (or ultradifferentiable functions) $f\in\mathcal{U}\subset\mathcal{S}$ such that following condition is satisfied
\begin{equation}
\label{eqn1}
\Vert f\Vert_{p}=\sup_{\vert\text{Im}(\xi)\vert<p; \xi\in\mathbb{C}}\Big\{(1+\vert \xi\vert^{p})^{\alpha}\vert f(\xi)\vert\Big\}<\infty\quad\forall\quad p\in\mathbb{N}\quad\text{and}\quad \vert\alpha\vert\leq k,\quad k\in\mathbb{R}.
\end{equation}
\end{Def}

\begin{Def}
\label{defdual}
The topological dual of the space $\mathcal{U}$ is called the space of all tempered ultradistributions as introduced by Silva \cite{26}, denoted by $\mathcal{U}'(\mathbb{C}^{n})$.
\end{Def}
\ \\
The continuous linear functional $\mu:\mathcal{U}\longrightarrow\mathbb{C}^{n}$ can be described as the Fourier transform of exponential distributions or as finite order derivatives of bounded functions with exponential growth. The spaces of tempered ultradistributions can be represented by analytical functions. Next we present the vanishing condition and compact supported tempered ultradistributions.

\begin{Def}
\label{defvanish}
A tempered ultradistribution $\mu\in\mathcal{U}'$ is said to vanish in an open subset $\Omega$ if $\displaystyle (1+\vert\xi\vert^{p})^{\alpha}\mu(x+iy)-(1+\vert\xi\vert^{p})^{\alpha}\mu(x-iy)\longrightarrow 0$ as $\vert\xi\vert\to 0$ and $y\to 0$.
\end{Def}

\begin{Def}
\label{defcompact}
A tempered ultradistribution $\mu\in\mathcal{U}'$ has a compact support if there is a representative function $\hat{\mu}$ vanishing at $\infty$. Equivalently, $\mu$ vanishes outside a compact support.
\end{Def}
\ \\
Let $\mathbb{H}\equiv \mathbb{H}(\mathbb{C})$ denotes the space of all entire functions equipped with the topology of uniform convergence on compact subsets in the complex plane. Then we have the following inclusion
\[\mathcal{D}\subset \mathcal{U}\subset \mathcal{S}\subset\mathbb{H}\]
In next section we extend the concept of tempered ultradistributions or ultradistributions of slow growth defined on the space of all rapidly decreasing ultradifferentiable functions to the class of classical Morrey spaces.

\section{Inclusion of Tempered Ultradistributions in Morrey Spaces}
In this section we introduce a tempered Morrey space or slow growth Morrey space, denoted $\mathcal{L}^{p,\beta}_{\mathcal{U}}$ (respectively $L^{p,\beta}_{\mathcal{U}}$) for $1\leq p<\infty$ and $0\leq \beta<n$ by means of growth condition of rapid decrease also satisfying vanishing properties. In particular, the extension of the functions $f\in\mathcal{U}'$ into the Morrey space satisfying the polynomial growth condition is considered. We also show that they are connected with some of the known subspaces of the Morrey spaces introduced in section 2.2. The context here is an open subset $\Omega$ of a complex plane $\mathbb{C}^n$, so that we omit the domain in the notation subsequentlx.\\
We concern with space consisting entirelx of tempered ultradistributions, hence subsets of $L_{\text{loc}}^{1}(\mathbb{C}^{n})\cap \mathcal{U}'(\mathbb{C}^{n})$ since $\mathcal{U}\subset\mathcal{S}$, $\mathcal{S}'\subset\mathcal{U}'$ and $L_{\text{loc}}^{1}(\mathbb{C}^{n})\vert_{\mathbb{R}^{n}}\cap \mathcal{U}'(\mathbb{C}^{n})\vert_{\mathbb{R}^{}}(=\mathcal{S}'(\mathbb{R}^{n}))$. These spaces are developed in the context of Lebesgue-measurable functions.

\begin{Def}
Let $1\leq p<\infty$ and $\alpha$ is a multi-index. A set of all functions $\mu$ belongs to the space $\mathcal{U}'(\mathbb{C}^{n})$, called the tempered ultradistributions, if
\begin{equation}
\label{eqn(*)}
\lim_{\vert\xi\vert\to 0}\sup_{\xi\in\mathbb{C}^{n}}\int_{\mathbb{C}^{n}}(1+\vert\xi\vert^{p})^{\alpha}\vert \mu(\xi)\vert^{p}d\xi=0
\end{equation}
\end{Def}

We introduce the following class of tempered ultradistributons in the classical Morrey space with both global and local versions.

\begin{Def}
\label{def1*}
Let $0<\beta<n$ and $p\in[1,\infty)$ or simply put $0<p\leq \beta<\infty$. We say that a tempered ultradistribution $\mu\in\mathcal{U}'(\mathbb{C}^{n})$ belongs to the global Morrey space $L^{p,\beta}_{\mathcal{U}}$, if there is $\gamma\in\mathcal{U}$ with $\displaystyle \int_{\mathbb{C}^n}\vert\gamma(\xi)\vert d\xi\neq 0$ such that
\[M_{\gamma}\mu(\xi)=\sup_{t\in(0,\infty)}\vert\gamma_{l}*\mu(\xi)\vert=\sup_{\xi\in\mathcal{C}^n}\Big\{(1+\vert\xi\vert^{p})^{\alpha}\vert\gamma_{l}(x-\xi)\mu(\xi)\vert\Big\}<\infty\]
where $M_{\gamma}\mu(\xi)$ is known as the smooth maximal function.
\end{Def}
The functional $\displaystyle\Vert\mu\Vert_{L^{p,\beta}_{\mathcal{U}}}=\Vert M_{\varphi}\mu\Vert_{L^{p,\beta}_{\mathcal{U}}}$ defines an almost norm as $0<p<1$ and becomes a norm for $p\geq 1$. The space $L^{p,\beta}_{\mathcal{U}}$ is defined by the metric $\displaystyle d(f,g):=\Vert f-g\Vert_{L^{p,\beta}_{\mathcal{U}}}$ is a  metric space which is complete and hence a Banach space.\\
Moreover, some properties are inherited from both the space of tempered ultradistributions and Morrey spaces and merged. For example, the convexity in Morrey spaces can be extended by the space $\mathcal{U}'$. Let
\begin{equation}
\label{eqnK}
\mathfrak{K}:=\Big\{\Vert\cdot\Vert_{\eta,t}:\eta,t\in\mathbb{N}_{0}^{n}\quad\text{such that}\quad \vert\eta\vert\leq N',\vert t \vert\leq N'\Big\}
\end{equation}
 be a finite collection of norms defined on $\mathcal{U}(\mathcal{C})$ and consider
\[\mathcal{U}_{\mathfrak{K}}:=\Big\{\gamma\in\mathcal{U}(\mathbb{C}^{n}):\Vert\gamma\Vert_{\eta,t}=\sup_{\xi\in\mathbb{C}}\vert \xi^{\eta}\partial_{\xi}\gamma(\xi)\vert=\sup_{\xi\in\mathbb{C}}\vert (1+\vert\xi\vert^{p})\vert\partial_{\xi}\gamma(\xi)\vert<\infty,\quad\forall\quad\Vert\gamma\Vert_{\eta,t}\in\mathfrak{K}\Big\}.\]

The proof of Theorem \ref{th} is equivalent to the ones present in Lemma 2.1 and Lemma 2.4 in \cite{jia}.
\begin{Th}\cite{jia, almeida}
\label{th}
Given $0<p\leq \beta<\infty$ and $\mu\in\mathcal{U}'(\mathbb{C}^{n})$. Then the following equivalent statements hold:
\begin{itemize}
\item[(i)] There exists $\psi\in\mathcal{U}(\mathbb{C}^{n})$ with $\displaystyle \int_{\mathbb{C}^n}\vert\psi(\xi)\vert d\xi<\infty$ such that $M_{\psi}\mu\in L^{p,\beta}_{\mathcal{U}}$;
\item[(ii)] There is a family $\mathfrak{K}$ defined in (\ref{eqnK}) such that $M_{\mathfrak{K}}\mu\in L^{p,\beta}_{\mathcal{U}}(\mathbb{C})$
\item[(iii)] $\mu$ is a bounded tempered ultradistribution.
\end{itemize}
\end{Th}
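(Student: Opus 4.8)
The plan is to establish the three statements as equivalent by running the cyclic chain of implications $(ii)\Rightarrow(i)\Rightarrow(iii)\Rightarrow(ii)$, following the maximal-function characterization scheme for Hardy-type spaces adapted to the ultradistribution and Morrey setting of \cite{jia,almeida}. The elementary direction is $(ii)\Rightarrow(i)$: because the grand maximal function $M_{\mathfrak{K}}\mu$ is a supremum over the entire family $\mathcal{U}_{\mathfrak{K}}$, any fixed $\psi\in\mathcal{U}_{\mathfrak{K}}$ obeys the pointwise bound $M_{\psi}\mu(\xi)\leq M_{\mathfrak{K}}\mu(\xi)$, so $M_{\mathfrak{K}}\mu\in L^{p,\beta}_{\mathcal{U}}$ immediately yields $M_{\psi}\mu\in L^{p,\beta}_{\mathcal{U}}$; the normalisation $\int_{\mathbb{C}^n}\vert\psi(\xi)\vert\,d\xi<\infty$ is then read off from the defining seminorms (\ref{eqn1}) of $\mathcal{U}$.

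For $(i)\Rightarrow(iii)$ I would fix $\psi\in\mathcal{U}$ with $M_{\psi}\mu\in L^{p,\beta}_{\mathcal{U}}$ and reconstruct $\mu$ from its smooth maximal function by a Calder\'on-type reproducing identity: one selects $\eta\in\mathcal{U}$ with $\int_{0}^{\infty}\widehat{\eta}(t\zeta)\,\widehat{\psi}(t\zeta)\,\frac{dt}{t}=1$ on the relevant frequencies, so that $\mu=\int_{0}^{\infty}\eta_{t}*(\psi_{t}*\mu)\,\frac{dt}{t}$ holds in $\mathcal{U}'(\mathbb{C}^n)$. Pairing this identity against an arbitrary $\gamma\in\mathcal{U}$, invoking the pointwise control $\vert(\psi_{t}*\mu)(\xi)\vert\leq M_{\psi}\mu(\xi)$, and absorbing the polynomial weight $(1+\vert\xi\vert^{p})^{\alpha}$ into the rapid decay of $\gamma$, the Morrey bound on $M_{\psi}\mu$ produces a uniform estimate for $\vert\langle\mu,\gamma\rangle\vert$, which is exactly the assertion that $\mu$ is a bounded tempered ultradistribution.

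The loop is closed by $(iii)\Rightarrow(ii)$: starting from boundedness of $\mu$, the action $\langle\mu,\gamma_{t}(x-\cdot)\rangle$ is controlled uniformly in $t>0$ and uniformly over all seminorms in the family $\mathfrak{K}$ of (\ref{eqnK}); the exponential decay of the members of $\mathcal{U}$ together with the weight $(1+\vert\xi\vert^{p})^{\alpha}$ then converts this uniform control into finiteness of $M_{\mathfrak{K}}\mu$ in the Morrey norm, using the Young-type convolution estimate (\ref{eqnref*}) on $L^{p,\beta}$.

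The step I expect to be the main obstacle is the genuine upgrade from a single test function to the grand family --- that is, the heart of $(i)\Rightarrow(ii)$ lurking inside the chain $(i)\Rightarrow(iii)\Rightarrow(ii)$. The crude pointwise inequality $\vert\psi_{t}*\mu\vert\leq M_{\psi}\mu$ does not by itself dominate the supremum taken over all translated and dilated test functions, so one must interpose the Peetre-type tangential maximal function $M^{**}_{\psi}\mu(\xi)=\sup_{t>0}\,\sup_{\zeta\in\mathbb{C}^n}\frac{\vert(\psi_{t}*\mu)(\xi-\zeta)\vert}{(1+t^{-1}\vert\zeta\vert)^{N}}$ and establish the comparison $\Vert M^{**}_{\psi}\mu\Vert_{L^{p,\beta}_{\mathcal{U}}}\lesssim\Vert M_{\psi}\mu\Vert_{L^{p,\beta}_{\mathcal{U}}}$. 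This demands a Fefferman--Stein type vector-valued maximal inequality valid on the Morrey scale under the hypothesis $0<p\leq\beta<\infty$, with the tangential exponent $N$ chosen large relative to $\beta$ so that the off-diagonal tails are summable; it is exactly in this interplay between the Morrey parameters and the ultradifferentiable decay that the technical content of Lemmas 2.1 and 2.4 of \cite{jia} resides.
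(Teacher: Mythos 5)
Your proposal is essentially the paper's approach: the paper supplies no argument of its own, stating only that the proof is the one given in Lemma 2.1 and Lemma 2.4 of \cite{jia}, and your cyclic scheme --- the trivial pointwise domination $M_{\psi}\mu\leq M_{\mathfrak{K}}\mu$ for $(ii)\Rightarrow(i)$, a Calder\'on-type reproducing identity for $(i)\Rightarrow(iii)$, and the single-to-grand upgrade via the Peetre tangential maximal function with a Fefferman--Stein type estimate on the Morrey scale for $(iii)\Rightarrow(ii)$ --- is precisely the content of those cited lemmas. Your identification of the comparison $\Vert M^{**}_{\psi}\mu\Vert\lesssim\Vert M_{\psi}\mu\Vert$ as the technical heart is exactly where the arguments in \cite{jia} and \cite{almeida} concentrate their effort.
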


We present the definition for the localizable Morrey space below:

\begin{Def}
\label{def1**}
Let $0<\beta<n$ and $p\in [1,\infty)$ or $0<p\leq \beta<\infty$. We say that a tempered ultradistribution $\mu\in\mathcal{U}'(\mathbb{C}^{n})$ belongs to the local Morrey space $\mathcal{L}^{p,\beta}_{\mathcal{U}}$, if there is $\psi\in\mathcal{U}$ with $\displaystyle \int_{\mathbb{C}^n}\vert\psi(\xi)\vert d\xi\neq 0$ such that
\[\sup_{l\in(0,\infty)}\vert\psi_{l}*\mu(\xi)\vert=\sup_{\xi\in\mathcal{C}^n}\Big\{(1+\vert\xi\vert^{p})\vert\psi_{l}(x-\xi)\mu(\xi)\vert\Big\}<\infty\]
\end{Def}
The non-homogeneous version includes the localizable Morrey spaces. That is, we have the continuous inclusion $\displaystyle L^{p,\beta}_{\mathcal{U}}\hookrightarrow\mathcal{L}^{p,\beta}_{\mathcal{U}}$ holds.\\
We also provide alternative definition of the inclusion of Fourier decay estimates for ultradistributions in the Morrey spaces.

  \begin{Def}
  \label{def}
 Let $1\leq p<\infty$, $0<\beta<\infty$ and let $\mu$ be a tempered ultradistribution satisfying condition (\ref{eqn(*)}) on $\mathbb{C}^n$. Then we define the space
 \begin{equation}
 \label{eqn30}
 \mathcal{L}_{\mathcal{U}}^{p,\beta}(\mathbb{C}^{n}):=\Big\{ \mu\in\mathcal{U}\implies \mu\in L_{\text{loc}}^{p}(\mathbb{C}^{n}): \Vert\mu\Vert_{\mathcal{L}_{\mathcal{U}}^{p,\beta}}<\infty\Big\}
\end{equation}
where
\[\Vert\mu\Vert_{\mathcal{L}_{\mathcal{U}}^{p,\beta}}=\sup_{\xi\in\mathbb{C}^{n}}\Big(\sup_{y\in\Omega;r>0}\frac{1}{r^\beta}\int_{\tilde{B}(y,r)}(1+\vert\xi\vert^{p})^{\alpha}\vert \mu(\xi)\vert^{p}d\xi\Big)\]
and the double supremum is taken over a complex function $\xi$ and taken over a ball $\bar{B}$ in $\mathbb{C}^{n}$.
  \end{Def}

The following result helps us in transferring properties from the global tempered Morrey space $\displaystyle L^{p,\beta}_{\mathcal{U}}$ to the corresponding local tempered Morrey space $\mathcal{L}^{p,\beta}_{\mathcal{U}}$, all satisfying the growth condition.

\begin{Lem}
\label{lem}
Let $\gamma\in\mathcal{U}$, $\displaystyle \int_{\mathcal{C}^{n}}\vert\gamma(\xi)\vert d\xi=1$ and $\displaystyle \int_{\mathcal{C}^{n}}(1+\vert\xi\vert^{p})^{\alpha}\vert\gamma(\xi)\vert d\xi=0\quad\forall\quad$ multi-index $\alpha\in\mathbb{N}_{0}^{n}$ such that $\vert\alpha\vert\neq 0$. Then there is $M>0$ such that
\[\Vert\mu-\gamma*\mu\Vert_{L^{p,\beta}}\leq M\Vert\mu\Vert_{\mathcal{L}^{p,\beta}}\]
for every $\mu\in\mathcal{L}^{p,\beta}_{\mathcal{U}}$ with $0<p<1$ and $p\leq \beta<\infty$.
\end{Lem}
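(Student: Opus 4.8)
The plan is to use the normalization $\int_{\mathbb{C}^n}\gamma(\xi)\,d\xi=1$ to rewrite the difference as a $\gamma$-weighted superposition of translation differences,
\[
\mu-\gamma*\mu=\int_{\mathbb{C}^n}\gamma(y)\bigl(\mu-\tau_y\mu\bigr)\,dy,
\]
so that estimating $\Vert\mu-\gamma*\mu\Vert_{L^{p,\beta}}$ reduces to controlling an average of the translates $\tau_y\mu-\mu$ against $\gamma$. The vanishing higher moments of $\gamma$ are what turn this identity into a genuine gain: expanding $\mu(x-y)$ in a finite Taylor polynomial about $x$, every term of order $1\le|\alpha|<N$ is annihilated upon integration against $\gamma$, and only the $N$-th order remainder survives, providing the decay in $|y|$ that will absorb the large-ball part of the homogeneous norm.

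First I would split the supremum defining the homogeneous norm according to the radius $r$. For small radii the estimate is purely local: the Young-type convolution inequality (\ref{eqnref*}) together with the normalization of $\gamma$ gives $\Vert\gamma*\mu\Vert_{p,\beta}\le\Vert\mu\Vert_{p,\beta}$, so this part of $\mu-\gamma*\mu$ is dominated directly by $\Vert\mu\Vert_{\mathcal{L}^{p,\beta}}$. For large radii I would instead use the Taylor-remainder representation above and insert the maximal-function control guaranteed by $\mu\in\mathcal{L}^{p,\beta}_{\mathcal{U}}$, as provided by the equivalences of Theorem \ref{th}; the $|y|$-decay coming from the moment cancellation then converts the stronger homogeneous average over big balls into a finite multiple of the weaker inhomogeneous one.

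Since $0<p<1$ the Morrey functional is only a quasi-norm, so throughout I would replace the triangle inequality by the $p$-subadditivity $\Vert f+g\Vert_{p,\beta}^{p}\le\Vert f\Vert_{p,\beta}^{p}+\Vert g\Vert_{p,\beta}^{p}$, and correspondingly use the $p$-th power form of Minkowski's integral inequality when moving the norm inside the $y$-integral. Combining the small- and large-radius estimates and taking the supremum over $y$ and $r$ yields the claimed bound, with a constant $M$ depending only on $p,\beta,n$ and finitely many moments of $\gamma$.

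The step I expect to be the main obstacle is the large-radius regime, precisely because the homogeneous norm on the left is strictly stronger than the inhomogeneous norm on the right (one has $L^{p,\beta}\hookrightarrow\mathcal{L}^{p,\beta}$, not the reverse). Without the vanishing-moment hypothesis the inequality would fail at large scales, so the heart of the argument is to quantify how the cancellation encoded in the moments of $\gamma$ produces enough $|y|$-decay to offset the growth of the homogeneous average over big balls, and to check that this decay is not destroyed by the quasi-norm manipulations forced by the range $0<p<1$.
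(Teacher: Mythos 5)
Your proposal rests on three tools, each of which breaks down in the stated range $0<p<1$. First, the small-radius step uses the Young-type bound $\Vert\gamma*\mu\Vert_{p,\beta}\leq\Vert\gamma\Vert_{1}\Vert\mu\Vert_{p,\beta}$, i.e.\ the paper's inequality (\ref{eqnref*}); but that inequality is stated, and is only true, for $1\leq p$ — convolution with an $L^{1}$ kernel is not bounded on $L^{p}$-based quasi-normed spaces when $p<1$, so this part of your argument has no support. Second, the ``integral form'' of $p$-subadditivity you invoke to move the quasi-norm inside $\int_{\mathbb{C}^{n}}\gamma(y)(\mu-\tau_{y}\mu)\,dy$ is false for continuous superpositions: with $F(x,y)=g(x)h_{t}(y)$, $h_{t}=t^{-1}\chi_{[0,t]}$, one gets $\bigl\Vert\int F(\cdot,y)\,dy\bigr\Vert_{p}^{p}=\Vert g\Vert_{p}^{p}$ while $\int\Vert F(\cdot,y)\Vert_{p}^{p}\,dy=t^{1-p}\Vert g\Vert_{p}^{p}\to 0$ as $t\to 0$; only the discrete inequality $\Vert f+g\Vert_{p,\beta}^{p}\leq\Vert f\Vert_{p,\beta}^{p}+\Vert g\Vert_{p,\beta}^{p}$ survives for $p<1$. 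Third, and most structurally, $\mu\in\mathcal{L}^{p,\beta}_{\mathcal{U}}$ is a tempered ultradistribution specified through a smooth maximal function (Definitions \ref{def1*} and \ref{def1**}); it has no pointwise values, so you cannot Taylor-expand $\mu(x-y)$ about $x$ and speak of the order-$N$ remainder of $\mu$. The cancellation encoded in the moments of $\gamma$ must be placed on the test-function side, not on $\mu$.

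That relocation is exactly what the paper's proof does, following Lemma 2.1 and Lemma 2.4 of \cite{jia}: it decomposes $\mu-\gamma*\mu$ through the dilations $\varphi_{l}$ into the three pieces $\sup_{0<l<1}\vert\varphi_{l}*\mu\vert$, $\sup_{0<l<1}\vert(\varphi_{l}*\gamma)*\mu\vert$ and $\sup_{l>1}\vert(\varphi_{l}-\varphi_{l}*\gamma)*\mu\vert$. The vanishing moments of $\gamma$ enter at precisely one point: they make the family $\{\varphi_{l}-\varphi_{l}*\gamma\}_{l>1}$ bounded in $\mathcal{U}$ (here the Taylor expansion is performed on the smooth kernels, where it is legitimate). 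Each piece is then dominated pointwise by the grand maximal function $m_{\mathfrak{K}}\mu$, and the maximal-function characterization of Theorem \ref{th} converts $\Vert m_{\mathfrak{K}}\mu\Vert$ into $\Vert\mu\Vert_{\mathcal{L}^{p,\beta}_{\mathcal{U}}}$. Your instinct that the moment cancellation must produce decay at large scales is correct — it corresponds to the regime $l>1$ in the paper — but to repair your write-up you would need to (i) transfer the Taylor expansion from $\mu$ to the kernels $\varphi_{l}-\varphi_{l}*\gamma$, and (ii) replace Young's inequality and the integral Minkowski step by the maximal-function machinery, since neither of those tools is available for $0<p<1$.
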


\begin{proof}
Let $\mu\in\mathcal{L}^{p,\beta}_{\mathcal{U}}$, and let $\displaystyle m_{\mathfrak{K}}\mu(\xi)=\sup_{l\in(0,\infty)}\vert\psi_{l}*\mu(\xi)\vert=\sup_{\xi\in\mathcal{C}^n}\Big\{(1+\vert\xi\vert^{p})\vert\psi_{l}(x-\xi)\mu(\xi)\vert\Big\}$ such that it is finite. Then we have
\begin{align*}
\Vert\mu-\gamma*\mu\Vert_{L^{p,\beta}_{\mathcal{U}}} & \lesssim \Vert\sup_{0<l<1}\vert\varphi_{l}*\mu\vert\Vert_{\mathcal{L}^{p,\beta}_{\mathcal{U}}} +\Vert\sup_{0<l<1}\vert(\varphi_{l}*\gamma)*\mu\vert\Vert_{\mathcal{L}^{p,\beta}_{\mathcal{U}}}\\
    &  +\Vert\sup_{l>1}\vert(\varphi_{l}-\varphi_{l}*\gamma)*\mu\vert\Vert_{L^{p,\beta}_{\mathcal{U}}}=\Vert\mu\Vert_{\mathcal{L}^{p,\beta}_{\mathcal{U}}}+I_{1}(\mu)+I_{2}(\mu)
\end{align*}
Since $\displaystyle \Big\{ \varphi_{l}*\gamma\Big\}$ and $\displaystyle \Big\{ \varphi_{l}-\varphi_{l}*\gamma\Big\}$ are bounded in $\mathcal{U}(\mathbb{C})$, then
\[\vert (\varphi_{l}*\gamma)*\mu\vert\lesssim m_{\mathfrak{K}}\mu\quad\text{for}\quad l\leq 1\]
and
\[\vert (\varphi_{l}-\varphi_{l}*\gamma)*\mu\vert\lesssim m_{\mathfrak{K}}\mu\quad\text{for}\quad l>1\]
which gives
\[I_{i}(\mu)\lesssim \Vert m_{\mathfrak{K}}\mu\Vert_{L^{p,\beta}_{\mathcal{U}}}\lesssim \Vert m_{\mathfrak{K}}\mu\Vert_{\mathcal{L}^{p,\beta}_{\mathcal{U}}}\]
for any choice of $i$ from 1.
\end{proof}

\subsection{Some Properties of Morrey Spaces}
We present some important results concerning the inclusion of tempered ultradistribution in the Morrey space, and we give some of the results involving some subspaces of Morrey spaces.
\begin{Lem}
\label{lemq}
Given $p'\leq \beta<\infty$, $p',\beta>0$ and $p\leq \gamma<\infty$, $p,\gamma>0$ with $p\beta=p'\gamma$. If $\mu\in L^{p',\beta}_{\mathcal{U}}$ and $\beta\leq \gamma$ then there is a positive constant $M$ (not dependent on $\mu$) such that
\[\Vert\varphi_{l}*\mu\Vert_{L^{p,\beta}_{\mathcal{U}}}\leq M l^{n\Big(\frac{1}{\gamma}-\frac{1}{\beta}\Big)}\Vert\mu\Vert_{L^{p',\beta}_{\mathcal{U}}}\]
provided $\varphi\in\mathcal{U}(\mathbb{C}^n)$ and $\displaystyle \int_{\mathbb{C}^{n}}\vert\varphi(\xi)\vert d\xi\neq 0$.
\end{Lem}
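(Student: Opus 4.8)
The plan is to reduce the claim to a scaling computation for the smooth maximal function and then read off the power of $l$ from the homogeneity of the Morrey norm. By Theorem~\ref{th}, membership in $L^{p',\beta}_{\mathcal U}$ and the size of $\|\mu\|_{L^{p',\beta}_{\mathcal U}}$ are controlled by a maximal function $M_\varphi\mu$ built from a fixed $\varphi\in\mathcal U(\mathbb C^n)$ with $\int_{\mathbb C^n}|\varphi|\,d\xi\neq 0$; since $\varphi\in\mathcal U\subset\mathcal S$, every $L^1$-normalized dilate $\varphi_l=l^{-n}\varphi(\,\cdot/l)$ is again an admissible kernel, so $\varphi_l*\mu$ is a well-defined element of $\mathcal U'(\mathbb C^n)$ whose Morrey norm I may estimate ball by ball. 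First I would record the pointwise domination $\sup_{t\in(0,\infty)}|\varphi_t*(\varphi_l*\mu)|\lesssim m_{\mathfrak K}\mu$ exactly as in the proof of Lemma~\ref{lem}, using that the relevant families are bounded in $\mathcal U(\mathbb C^n)$; this step carries out the passage to the genuine (unweighted) Morrey scale before any dilation is performed, and it must be done first because the weight $(1+|\xi|^p)^\alpha$ in the definition of $M_\varphi$ does not commute with dilations.

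The heart of the argument is the dilation identity. A change of variables gives $(\varphi_l*\mu)(x)=\big(\varphi*\mu(l\,\cdot)\big)(x/l)$, so applying the homogeneity of the Morrey norm~\eqref{eqnref} with $s=1/l$ extracts a factor $l^{(n-\beta)/p}$ on the target side, while Young's convolution inequality for Morrey spaces~\eqref{eqnref*} absorbs the fixed kernel $\varphi$ at the cost of $\|\varphi\|_1$. A second application of~\eqref{eqnref} to $\mu(l\,\cdot)$ produces the complementary power of $l$ from the source side. In this way the estimate is organized as a product of two scaling factors and one convolution constant, all of which are independent of $\mu$.

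The exponent $n\big(\tfrac1\gamma-\tfrac1\beta\big)$ is produced by the mismatch between the target integrability $p$ and the source integrability $p'$. The hypotheses $p\beta=p'\gamma$ and $\beta\le\gamma$ force $p'\le p$, so the passage from the source exponent $p'$ to the target exponent $p$ must be handled by a Young-type convolution estimate in the Morrey scale with distinct integrability exponents, generalizing~\eqref{eqnref*}; the smoothing by $\varphi_l$ supplies exactly the gain in integrability, and the dilated kernel contributes the decisive power of $l$. After substituting the relation $p\beta=p'\gamma$ to eliminate $p'$, the three competing powers of $l$ are required to collapse to $l^{n(1/\gamma-1/\beta)}$. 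Collecting $\|\varphi\|_1$ and the constant from the integrability change into a single $M>0$, depending only on $n,p,p',\beta,\gamma,\varphi$ and not on $\mu$, yields the asserted bound for every $\mu\in L^{p',\beta}_{\mathcal U}$.

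The main obstacle I anticipate is precisely this bookkeeping of the powers of $l$: one arising from the dilation homogeneity~\eqref{eqnref} applied on the target side, one from rescaling $\mu(l\,\cdot)$ on the source side, and one from the kernel factor in the exponent-changing Young estimate. Only the exact relation $p\beta=p'\gamma$ makes them combine to the stated exponent, and verifying this cancellation, rather than any single inequality, is the delicate point; in particular the admissibility range of the auxiliary Young exponent has to be checked to be compatible with $p'\le p$ and $\beta\le\gamma$. A secondary technical care is that all dilation identities must be applied at the level of the smoothed function $\varphi_l*\mu$, using the uniform boundedness of the smoothed families in $\mathcal U(\mathbb C^n)$ to dominate the maximal function by $m_{\mathfrak K}\mu$, so that the weighted definition of $M_\varphi$ never interferes with the scaling.
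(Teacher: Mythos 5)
You have correctly isolated the delicate point, but you have not resolved it, and as you have organized the argument it fails. After your dilation identity $(\varphi_l*\mu)(x)=\bigl(\varphi*\mu(l\,\cdot)\bigr)(x/l)$ and two applications of the homogeneity law (\ref{eqnref}), the kernel left in the middle is the \emph{fixed}, unit-scale $\varphi$, so the exponent-changing ``Young-type estimate generalizing (\ref{eqnref*})'' that you invoke --- and never state or prove --- can contribute a constant but no power of $l$. The total power of $l$ your scheme produces is therefore
\begin{equation*}
l^{\frac{n-\beta}{p}}\cdot l^{\frac{\beta-n}{p'}}=l^{(n-\beta)\left(\frac{1}{p}-\frac{1}{p'}\right)}=l^{\frac{(n-\beta)(\beta-\gamma)}{p\beta}},
\end{equation*}
the last equality using $p'=p\beta/\gamma$. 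This equals the asserted $l^{n\left(\frac{1}{\gamma}-\frac{1}{\beta}\right)}=l^{\frac{n(\beta-\gamma)}{\beta\gamma}}$ only when $(n-\beta)/p=n/\gamma$, i.e.\ $\gamma=np/(n-\beta)$, the Sobolev-type exponent appearing in (\ref{eqnS}) --- a relation not among the hypotheses of Lemma \ref{lemq}; indeed for $\beta>n$, which the statement permits, your exponent and the asserted one have opposite signs. So the assertion that the powers ``are required to collapse'' after substituting $p\beta=p'\gamma$ is exactly the unverified step, and the verification does not go through. Nor can the missing estimate be supplied by the tools you name: plain Young with $\Vert\varphi\Vert_{1}$ gives no gain of integrability from $p'$ to $p$, and the maximal-function domination $\sup_{t}\vert\varphi_{t}*(\varphi_{l}*\mu)\vert\lesssim m_{\mathfrak{K}}\mu$ from the proof of Lemma \ref{lem} is uniform in $l$, hence structurally incapable of producing any decaying power of $l$. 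The genuine mechanism, as in the Hardy--Morrey literature \cite{jia, almeida}, is a pointwise bound $\vert\varphi*\nu(x)\vert\lesssim\inf_{y\in B(x,1)}m_{\mathfrak{K}}\nu(y)$, followed by averaging the $p'$-th power over $B(x,1)$, controlling that average by the Morrey norm of $\nu$ on the ball, and interpolating the resulting $L^{\infty}$ bound against the $l$-uniform $L^{p',\beta}$ bound; carrying this out again yields the exponent $(n-\beta)\bigl(\frac{1}{p}-\frac{1}{p'}\bigr)$, confirming that the discrepancy with $n\bigl(\frac{1}{\gamma}-\frac{1}{\beta}\bigr)$ lies in the parameter bookkeeping and is not an artifact of your particular route.

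For completeness on the comparison: the paper offers no argument to measure your sketch against --- its entire ``proof'' of Lemma \ref{lemq} is a sentence deferring to Proposition \ref{prop}, which is itself stated without proof and asserts something different (membership in $L^{q,\beta}_{\mathcal{U}}$ of bounded, compactly supported functions with vanishing moments, together with an $L^{\infty}$ bound). Your proposal is thus more substantive than the source, but it remains an outline whose two load-bearing steps --- the fixed-kernel integrability-gain estimate and the collapse of the $l$-exponents --- are respectively missing and, under the stated hypotheses alone, false.
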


The proof of the claim in Lemma \ref{lemq} can be found in Proposition \ref{prop}.

\begin{Prop}
\label{prop}
Let $0<p'\leq 1$ and $0\leq \beta<\infty$. Any bounded function with compact support which satisfies the following condition
\[\frac{1}{r^\beta}\int_{\mathbb{C}^{n}}(1+\vert\xi\vert^{p})^{\alpha}\vert \mu(\xi)\vert d\xi=0\quad\text{for}\quad \vert\alpha\vert\leq K\]
with $\displaystyle K\geq N_{p'}:=\lfloor \Big(\frac{1-p'}{p}\Big)n\rfloor$ belongs to $L^{q,\beta}_{\mathcal{U}}$. \\
Furthermore
\[\Vert\mu\Vert_{L^{p',\beta}_{\mathcal{U}}}\lesssim \Vert \mu\Vert_{L^{\infty}}\vert\Omega\vert^{\frac{1}{\beta}}\]
$\forall$ $\Omega$ such that $\text{supp}(\mu)\subset \Omega$.
\end{Prop}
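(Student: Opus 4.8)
The plan is to reduce everything to the smooth maximal function characterisation of Theorem \ref{th}. To place $\mu$ in $L^{q,\beta}_{\mathcal{U}}$ it is enough, by part (i) of that theorem, to exhibit a single $\psi\in\mathcal{U}(\mathbb{C}^n)$ with $\int_{\mathbb{C}^n}\vert\psi\vert\,d\xi\neq 0$ for which the smooth maximal function $M_{\psi}\mu(\xi)=\sup_{l>0}\vert\psi_{l}*\mu(\xi)\vert$, carrying the tempered weight $(1+\vert\xi\vert^{p})^{\alpha}$, has finite Morrey quasi-norm. After a translation I may assume $\text{supp}(\mu)\subset\Omega\subset\bar{B}(0,R)$ with $\vert\Omega\vert\sim R^{n}$, and I would estimate $M_{\psi}\mu$ by splitting $\mathbb{C}^{n}$ into the near zone $\bar{B}(0,2R)$ and the far zone $\{\vert\xi\vert>2R\}$.

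On the near zone the Young convolution inequality (\ref{eqnref*}) gives the crude uniform bound $\vert\psi_{l}*\mu(\xi)\vert\leq\Vert\mu\Vert_{L^{\infty}}\Vert\psi\Vert_{1}$, so that $M_{\psi}\mu(\xi)\lesssim\Vert\mu\Vert_{L^{\infty}}$ there. Inserting this into the Morrey average $r^{-\beta}\int_{\bar{B}(y,r)}(\cdot)^{p'}$ and optimising over the radius $r$ yields a contribution bounded by $\Vert\mu\Vert_{L^{\infty}}\vert\Omega\vert^{1/\beta}$, which already accounts for the asserted right-hand side.

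The far zone is where the moment hypothesis is used and is the substantive step. For $\vert\xi\vert>2R$ and any $l>0$ I would subtract from $\eta\mapsto\psi_{l}(\xi-\eta)$ its Taylor polynomial $T_{K}$ of order $K$ about the centre of $\Omega$; the vanishing conditions $\int(1+\vert\xi\vert^{p})^{\alpha}\vert\mu\vert\,d\xi=0$ for $\vert\alpha\vert\leq K$ annihilate the polynomial part, leaving only the Taylor remainder. Combining the remainder estimate with the rapid decrease of $\psi\in\mathcal{U}$ bounds the integrand by a constant multiple of $\Vert\mu\Vert_{L^{\infty}}R^{\,n+K+1}\vert\xi\vert^{-(n+K+1)}$, and hence $M_{\psi}\mu(\xi)\lesssim\Vert\mu\Vert_{L^{\infty}}R^{\,n+K+1}\vert\xi\vert^{-(n+K+1)}$ on the far zone. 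The threshold $K\geq N_{p'}=\lfloor\frac{(1-p')n}{p}\rfloor$ is exactly what makes this decay $p'$-integrable against the Morrey measure, so the far-zone contribution to the quasi-norm is again controlled by $\Vert\mu\Vert_{L^{\infty}}\vert\Omega\vert^{1/\beta}$.

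Adding the two pieces and taking the supremum over the balls $\bar{B}(y,r)$ produces the stated estimate $\Vert\mu\Vert_{L^{p',\beta}_{\mathcal{U}}}\lesssim\Vert\mu\Vert_{L^{\infty}}\vert\Omega\vert^{1/\beta}$, and finiteness of this quasi-norm delivers $\mu\in L^{q,\beta}_{\mathcal{U}}$ via Theorem \ref{th}. I expect the main obstacle to be the far-zone estimate: verifying that $K\geq N_{p'}$ is precisely the number of vanishing moments needed for convergence of the tail integral taken against the Morrey average (with its supremum over all balls, not merely a single $L^{p'}$ norm), while simultaneously tracking the tempered weight $(1+\vert\xi\vert^{p})^{\alpha}$ through the Taylor remainder so that it does not destroy the gain provided by the moments.
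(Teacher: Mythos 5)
The paper contains no proof of Proposition \ref{prop} to compare you against: the text merely remarks, circularly, that the proof of Lemma \ref{lemq} ``can be found in Proposition \ref{prop}'' and then leaves the proposition unproved, implicitly deferring to the atomic estimates for Hardy--Morrey spaces in \cite{jia} and \cite{almeida}. Your two-zone scheme (uniform bound on the maximal function near the support, Taylor subtraction against the vanishing moments in the far zone, decay $\vert\xi\vert^{-(n+K+1)}$) is exactly that standard route, and in outline it is the right one; the membership conclusion via Theorem \ref{th}(i) is also fine in spirit. But two concrete steps fail as written. First, the near-zone optimization does not produce the stated exponent. With $M_{\psi}\mu\lesssim\Vert\mu\Vert_{L^{\infty}}$ on $\bar{B}(0,2R)$ and $\vert\Omega\vert\sim R^{n}$ you get
\begin{equation*}
\sup_{y,r>0}\,r^{-\beta}\int_{\bar{B}(y,r)}\big(M_{\psi}\mu\big)^{p'}\,d\xi\;\lesssim\;\Vert\mu\Vert_{L^{\infty}}^{p'}\,\sup_{r>0}\,r^{-\beta}\min(r,R)^{n}\;\sim\;\Vert\mu\Vert_{L^{\infty}}^{p'}\,R^{\,n-\beta},
\end{equation*}
i.e.\ a contribution $\Vert\mu\Vert_{L^{\infty}}\vert\Omega\vert^{(n-\beta)/(np')}$, not $\Vert\mu\Vert_{L^{\infty}}\vert\Omega\vert^{1/\beta}$. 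The power $1/\beta$ is incompatible with the scaling law (\ref{eqnref}) and blows up as $\beta\to 0$, where the estimate must degenerate to the elementary $\Vert\mu\Vert_{L^{p'}}\leq\Vert\mu\Vert_{L^{\infty}}\vert\Omega\vert^{1/p'}$; so when you say the near zone ``already accounts for the asserted right-hand side,'' you are asserting the target rather than deriving it, and a correct execution of your own computation proves a different (dimensionally consistent) inequality, which you should state as such and flag the printed exponent as a misprint.

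Second, the moment count does not reconcile the way you claim. Your far-zone bound $M_{\psi}\mu(\xi)\lesssim\Vert\mu\Vert_{L^{\infty}}R^{\,n+K+1}\vert\xi\vert^{-(n+K+1)}$ is standard, but its $p'$-integrability forces $(n+K+1)p'>n$, i.e.\ $K\geq\lfloor n(1/p'-1)\rfloor$, with denominator $p'$; and tested against the Morrey functional (balls far from the support with radius comparable to their distance from it) the requirement even relaxes to $(n+K+1)p'>n-\beta$. Neither coincides with the paper's $N_{p'}=\lfloor(1-p')n/p\rfloor$, whose denominator is $p$, except when $p=p'$; your assertion that $K\geq N_{p'}$ is ``exactly'' the threshold silently identifies the two exponents instead of confronting the mismatch. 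Relatedly, the hypothesis as printed, $r^{-\beta}\int_{\mathbb{C}^{n}}(1+\vert\xi\vert^{p})^{\alpha}\vert\mu(\xi)\vert\,d\xi=0$, has a nonnegative integrand and literally forces $\mu\equiv 0$, making the proposition vacuous; the condition your Taylor-subtraction step actually consumes is the genuine moment condition $\int\xi^{\alpha}\mu(\xi)\,d\xi=0$ for $\vert\alpha\vert\leq K$, and a complete write-up must say so explicitly, since otherwise the annihilation of the Taylor polynomial has no hypothesis to invoke.
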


We present the following result as related to the space defined in (\ref{eqn30}).

\begin{Lem}
\label{lem}
A function $f\in\mathcal{L}_{\mathcal{U}}^{p,\beta}$ satisfies condition (\ref{eqn(*)}) if and only if
\begin{equation}
\label{eqn3.1}
\lim_{\vert\xi\vert\to\infty}\sup_{\xi\in\mathbb{C}^{n}}\sup_{y\in\Omega;r>0}\frac{1}{r^\beta}\int_{B(y,r)}(1+\vert\xi\vert^{p})^{\alpha}\vert \mu(\xi)\vert^{p}d\xi=0
\end{equation}
uniformly (absolutely) in $0<r\leq l$ for fixed $l>0$.
\end{Lem}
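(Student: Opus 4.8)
\section*{Proof proposal}

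The plan is to read both (\ref{eqn(*)}) and (\ref{eqn3.1}) as vanishing statements for one weighted family, the difference being that (\ref{eqn(*)}) is a \emph{global} whole-space condition while (\ref{eqn3.1}) is the \emph{localized} band-limited Morrey average of Definition \ref{def}, and that they are attached to opposite ends of the spectral parameter ($|\xi|\to 0$ versus $|\xi|\to\infty$). Keeping the paper's notation with $\xi$ as the internal variable, put
\[ G:=\int_{\mathbb{C}^{n}}(1+|\xi|^{p})^{\alpha}|\mu(\xi)|^{p}\,d\xi, \]
\[ \Phi_{l}:=\sup_{y\in\Omega}\ \sup_{0<r\le l}\frac{1}{r^{\beta}}\int_{B(y,r)}(1+|\xi|^{p})^{\alpha}|\mu(\xi)|^{p}\,d\xi, \]
so that (\ref{eqn(*)}) is the vanishing of $G$ and (\ref{eqn3.1}) the locally uniform vanishing of $\Phi_{l}$ under the respective spectral limits. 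The hypothesis $f\in\mathcal{L}_{\mathcal{U}}^{p,\beta}$ is exactly what lets the two speak to each other: by Definition \ref{def} it gives the finite majorant $\Phi_{\infty}=\Vert\mu\Vert_{\mathcal{L}_{\mathcal{U}}^{p,\beta}}<\infty$, under which every limit below is dominated and the supremum over $(y,r)$ may be exchanged with the spectral limit.

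For (\ref{eqn(*)})$\Rightarrow$(\ref{eqn3.1}) I would fix $l>0$ and bound $\Phi_{l}$ by splitting each localized average into the part carried by the global integral $G$ and a remainder. Since $B(y,r)\subset\mathbb{C}^{n}$, each localized weighted integral is majorized by $G$ up to the factor $r^{-\beta}$; restricting to $0<r\le l$ keeps that factor under control on the relevant scales, so the smallness of $G$ furnished by (\ref{eqn(*)}) is inherited by $\Phi_{l}$ uniformly in the center $y$. The finiteness of $\Vert\mu\Vert_{\mathcal{L}_{\mathcal{U}}^{p,\beta}}$ then provides the integrable majorant needed to apply the Lebesgue dominated convergence theorem — precisely the tool flagged after Definition \ref{Def3.2} — to pass the spectral limit inside $\sup_{y}\sup_{0<r\le l}$; the monotonicity of $\Phi_{l}$ in $l$ together with the truncated-supremum equivalence recalled just before Remark \ref{rem2} then upgrades pointwise vanishing to the locally uniform vanishing asserted in (\ref{eqn3.1}).

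For the converse (\ref{eqn3.1})$\Rightarrow$(\ref{eqn(*)}) I would argue by contradiction. If (\ref{eqn(*)}) failed there would be $\varepsilon_{0}>0$ and spectral parameters $\xi_{k}$ along which the global quantity stays $\ge\varepsilon_{0}$. Exhausting $\mathbb{C}^{n}$ by balls $B(y,r)$ with $0<r\le l$ and using that the global integral is the monotone limit of these localized averages — legitimate because Definition \ref{def} supplies the finite uniform bound and the weight $(1+|\xi|^{p})^{\alpha}$ remains integrable against $\mu$ by the rapid-decrease structure of $\mathcal{U}$ in Definition \ref{defultra} — the persistent mass $\varepsilon_{0}$ must surface in $\Phi_{l}$ for suitable $(y,r)$ on the band $0<r\le l$. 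This keeps $\Phi_{l}$ bounded below along $\xi_{k}$, contradicting (\ref{eqn3.1}); hence (\ref{eqn(*)}) holds.

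The main obstacle I expect is the interchange of the spectral limit with the double supremum over centers $y$ and radii $r$ in the presence of the unbounded weight $(1+|\xi|^{p})^{\alpha}$: the weight grows as $|\xi|\to\infty$, so the localized and global conditions pull in opposite directions and neither supremum is a priori continuous. The whole argument therefore hinges on extracting from $f\in\mathcal{L}_{\mathcal{U}}^{p,\beta}$ an equi-vanishing estimate, uniform in $y$ and in $0<r\le l$, for the family $\{(1+|\xi|^{p})^{\alpha}|\mu(\xi)|^{p}\}$ — a uniform-integrability statement — after which dominated convergence and the truncated-supremum equivalence make both implications routine.
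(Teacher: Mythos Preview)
Your plan has a concrete gap in the direction (\ref{eqn(*)})$\Rightarrow$(\ref{eqn3.1}). You propose to bound
\[
\frac{1}{r^{\beta}}\int_{B(y,r)}(1+|\xi|^{p})^{\alpha}|\mu(\xi)|^{p}\,d\xi
\ \le\ r^{-\beta}\,G,
\]
and then claim that ``restricting to $0<r\le l$ keeps that factor under control.'' For $\beta>0$ this is simply false: $r^{-\beta}$ is \emph{unbounded} on $(0,l]$ (it blows up as $r\to 0^{+}$), so the direct majorization by the global integral $G$ gives no uniform control over small radii. The dominated-convergence step you invoke afterwards does not repair this, because there is no integrable majorant uniform in $r\in(0,l]$ for the family $r^{-\beta}\chi_{B(y,r)}(\cdot)$; the finiteness of $\Vert\mu\Vert_{\mathcal{L}_{\mathcal{U}}^{p,\beta}}$ bounds the \emph{supremum} of the averages, not the individual averages in a way that lets you pass the limit through $\sup_{0<r\le l}$.

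The paper closes exactly this hole with a covering argument that you are missing. For fixed $l>0$ one uses that any ball $B(y,r)$ with $0<r\le l$ can be covered by at most $m=m(l,n)$ balls $B(y_{i},1)$ of unit radius, the number $m$ depending only on $l$ and $n$ and \emph{not} on $y$ or $r$. This yields
\[
\int_{B(y,r)}(1+|\xi|^{p})^{\alpha}|\mu(\xi)|^{p}\,d\xi
\ \le\ \sum_{i=1}^{m}\int_{B(y_{i},1)}(1+|\xi|^{p})^{\alpha}|\mu(\xi)|^{p}\,d\xi
\ \le\ m\,\sup_{z}\int_{B(z,1)}(1+|\xi|^{p})^{\alpha}|\mu(\xi)|^{p}\,d\xi,
\]
so the bound is uniform in $y$ and in $0<r\le l$ without ever having to control $r^{-\beta}$ separately; one then reads off (\ref{eqn3.1}) directly from (\ref{eqn(*)}). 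For the reverse implication the paper just observes that (\ref{eqn3.1}) trivially dominates (\ref{eqn(*)}), so your contradiction argument, while not wrong, is unnecessary. The essential missing idea in your proposal is this finite-covering step, which is what makes the uniformity in $r$ go through.
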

  \begin{proof}
  Assume condition (\ref{eqn3.1}), then clearly (\ref{eqn3.1}) implies condition (\ref{eqn(*)}). Conversely, let assume condition (\ref{eqn(*)}) we prove that (\ref{eqn3.1}) is satisfied. To see this, suppose that $\mu$ satisfies (\ref{eqn(*)}). Let $l>0$ be arbitrarily fixed and $y\in\Omega$. Then there is $m\in\mathbb{N}$ (dependent on $l$ and $n$) and $y_{i}\in B(y,t),i=1,\cdots,m$ such that
  \[B(y,r)\subset \bigcup_{i\in \mathbb{N}^{m}}B(y_{i},1)\]
  $\forall$ $r\in(0,l]$. But
  \begin{align*}
  \int_{B(y,r)}(1+\vert\xi\vert^{p})^{\alpha}\vert \mu(\xi)\vert^{p}d\xi &\leq \sum_{i=1}^{m} \int_{B(y_{i},1)}(1+\vert\xi\vert^{p})^{\alpha}\vert \mu(\xi)\vert^{p}d\xi\\
        & \leq M\sup_{\xi\in\mathbb{C}^{n}}\int_{B(y_{i},1)}(1+\vert\xi\vert^{p})^{\alpha}\vert \mu(\xi)\vert^{p}d\xi\\
        &\leq M \sup_{\xi\in\mathbb{C}^{n}}\sup_{y\in\Omega;r>0}\frac{1}{r^\beta}\int_{B(y,r)}(1+\vert\xi\vert^{p})^{\alpha}\vert \mu(\xi)\vert^{p}d\xi
  \end{align*}
For a constant $M>0$ independent of $y$ and $r$, we get
  \[\sup_{\xi\in\mathbb{C}^{n}}\sup_{y\in\Omega;r>0}\frac{1}{r^\beta}\int_{B(y,r)}(1+\vert\xi\vert^{p})^{\alpha}\vert \mu(\xi)\vert^{p}d\xi\leq M \sup_{y\in\Omega;r>0}\frac{1}{r^\beta}\int_{B(y,r)}\vert \mu(\xi)\vert^{p}d\xi\]
where $\displaystyle M=\sup_{\xi\in\mathbb{C}^{n}}\Big\{(1+\vert\xi\vert^{p})^{\alpha}\Big\}$ from which (\ref{eqn3.1}) follows, uniformly in $0<r\leq l$ for $l>0$.
  \end{proof}

  \begin{Def}
  \label{Def3.5}
  Let $0\leq \beta<n$ and $1\leq p<\infty$, we define the subspace $L^{p,\beta}_{0,\infty,*}$ of $L^{p,\beta}$ (respectively $\mathcal{L}^{p,\beta}_{0,\infty,*}$ of $\mathcal{L}^{p,\beta}$) as
  \[L^{p,\beta}_{0,\infty,*}=L^{p,\beta}_{0}\cap L^{p,\beta}_{\infty}\cap L^{p,\beta}_{*}\]
  Similarly, we have
  \[\mathcal{L}^{p,\beta}_{0,\infty,*}=\mathcal{L}^{p,\beta}_{0}\cap \mathcal{L}^{p,\beta}_{\infty}\cap \mathcal{L}^{p,\beta}_{*}\]
  \end{Def}

\begin{Rem}
We can define the function $\psi$ by
\[\psi(y)=\sum_{k=2}^{\infty}\chi_{B_{t}(y)}\]
where $B_{t}=B(2^{-t}e_{1},1)$, $t\in\mathbb{N}$, with $e_{1}=(1,0,\cdots,0)$. Then $\psi\in L^{p,\beta}_{0}\bigcap L^{p,\beta}_{\infty}$ but $\psi\notin L^{p,\beta}_{*}$
\end{Rem}

Consider the following result as related to the already defined subspaces of Morrey spaces.

\begin{Rem}
\label{Rem3.7}
Clearly, all the vanishing subsets $L^{p,\beta}_{0}, L^{p,\beta}_{\infty}$, $L^{p,\beta}_{*}$ and $L^{p,\beta}_{0,\infty}$ are closed in $L^{p,\beta}$
\end{Rem}

\begin{Lem}
\label{lem3.8}
The space $\mathcal{L}^{p,\beta}_{\mathcal{U}}$ is closed in $L^{p,\beta}$.
\end{Lem}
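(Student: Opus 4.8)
The plan is to prove sequential closedness: given a sequence $\{\mu_k\}\subset\mathcal{L}^{p,\beta}_{\mathcal{U}}$ with $\mu_k\to\mu$ in $L^{p,\beta}$, I would show that the limit $\mu$ again lies in $\mathcal{L}^{p,\beta}_{\mathcal{U}}$. Linearity of the subspace is immediate from Definition \ref{def}, so only the topological part requires work. The first step is to record the two-sided comparison of norms on the subspace: since $(1+|\xi|^p)^\alpha\ge 1$ we have $\Vert\cdot\Vert_{L^{p,\beta}}\le\Vert\cdot\Vert_{\mathcal{L}^{p,\beta}_{\mathcal{U}}}$, which gives the continuous inclusion $\mathcal{L}^{p,\beta}_{\mathcal{U}}\hookrightarrow L^{p,\beta}$, while the estimate carried out in the preceding Lemma, with the constant $M=\sup_{\xi\in\mathbb{C}^n}(1+|\xi|^p)^\alpha$, yields the reverse domination $\Vert\cdot\Vert_{\mathcal{L}^{p,\beta}_{\mathcal{U}}}\le M^{1/p}\Vert\cdot\Vert_{L^{p,\beta}}$. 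Thus the two norms are equivalent on $\mathcal{L}^{p,\beta}_{\mathcal{U}}$.

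Granting this equivalence, closedness follows cleanly from the completeness already asserted for these tempered Morrey spaces. Indeed, an $L^{p,\beta}$-convergent sequence is $L^{p,\beta}$-Cauchy, hence $\mathcal{L}^{p,\beta}_{\mathcal{U}}$-Cauchy by the equivalence; completeness of $\mathcal{L}^{p,\beta}_{\mathcal{U}}$ produces a limit $\nu\in\mathcal{L}^{p,\beta}_{\mathcal{U}}$, and the continuous inclusion together with uniqueness of limits in $L^{p,\beta}$ forces $\mu=\nu$. Hence $\mu\in\mathcal{L}^{p,\beta}_{\mathcal{U}}$, and the subspace is closed.

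A direct argument, preferable if one does not wish to invoke the norm equivalence, is to work with the weighted Morrey integral itself. Convergence in $L^{p,\beta}$ forces, for each fixed ball $B(y,r)$, convergence $\mu_k\to\mu$ in $L^p(\bar B(y,r))$; an exhaustion-and-diagonal extraction then yields a subsequence converging a.e.\ on $\mathbb{C}^n$. Applying Fatou's lemma to the integrand $(1+|\xi|^p)^\alpha|\mu_{k_j}(\xi)|^p$ on each ball and taking the supremum over $(y,r)$ gives the lower semicontinuity $\Vert\mu\Vert_{\mathcal{L}^{p,\beta}_{\mathcal{U}}}\le\liminf_j\Vert\mu_{k_j}\Vert_{\mathcal{L}^{p,\beta}_{\mathcal{U}}}$, while the defining vanishing condition (\ref{eqn(*)}), recast through the preceding Lemma as the uniform condition (\ref{eqn3.1}), is transported to $\mu$ by the same a.e.\ convergence.

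I expect the main obstacle to be guaranteeing that the right-hand side $\liminf_j\Vert\mu_{k_j}\Vert_{\mathcal{L}^{p,\beta}_{\mathcal{U}}}$ is finite, i.e.\ ruling out blow-up of the weighted Morrey norms along the sequence. This is exactly the point at which the boundedness of the weight, $M=\sup_{\xi}(1+|\xi|^p)^\alpha<\infty$, and hence the rapid-decrease structure built into $\mathcal{U}$, must be used; it is what converts mere $L^{p,\beta}$-boundedness of the convergent sequence into $\mathcal{L}^{p,\beta}_{\mathcal{U}}$-boundedness. The secondary delicate point is the stability of the vanishing condition (\ref{eqn(*)}) under an $L^{p,\beta}$-perturbation, which again relies on the uniform reformulation (\ref{eqn3.1}) so that the limit may legitimately be taken inside the supremum over balls.
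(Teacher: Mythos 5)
Your proposal misses the step that actually carries the lemma, in both of its routes. The paper's own proof is the elementary splitting argument: given $\mu_m\to\mu$ in $L^{p,\beta}$, choose $\bar k$ with $\Vert\mu-\mu_{\bar k}\Vert_{p,\beta}$ small and estimate, for every ball,
\[
\int_{B(y,r)}(1+\vert\xi\vert^{p})^{\alpha}\vert\mu(\xi)\vert^{p}\,d\xi\;\le\;2^{p}\int_{B(y,r)}(1+\vert\xi\vert^{p})^{\alpha}\vert\mu(\xi)-\mu_{\bar k}(\xi)\vert^{p}\,d\xi\;+\;2^{p}\int_{B(y,r)}(1+\vert\xi\vert^{p})^{\alpha}\vert\mu_{\bar k}(\xi)\vert^{p}\,d\xi,
\]
with the first term controlled by the convergence and the second by $\mu_{\bar k}\in\mathcal{L}^{p,\beta}_{\mathcal{U}}$; it is this \emph{upper} bound that transports both finiteness of the weighted quantity and the vanishing condition (\ref{eqn(*)}) to the limit. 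Your first route replaces this by norm equivalence plus completeness of $\mathcal{L}^{p,\beta}_{\mathcal{U}}$, and that is circular in this paper: completeness of these spaces is only asserted later (Theorem \ref{th3.1}, given essentially without proof), and the standard way to obtain it is precisely to prove the present closedness lemma first and inherit completeness from the Banach space $L^{p,\beta}$. Moreover the constant $M=\sup_{\xi\in\mathbb{C}^{n}}(1+\vert\xi\vert^{p})^{\alpha}$ you import is finite only if the weight is bounded, which fails on all of $\mathbb{C}^{n}$ when $\alpha>0$ (the paper commits the same abuse elsewhere, but a self-contained proof should not lean on it); and observe that if the two norms genuinely were equivalent, membership in $\mathcal{L}^{p,\beta}_{\mathcal{U}}$ would reduce entirely to the vanishing condition, so that stability of that condition under limits would be the \emph{whole} content of the lemma --- and your route never addresses it.

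Your second route exposes the same gap in sharper form. Fatou's lemma along an a.e.-convergent subsequence gives $\Vert\mu\Vert_{\mathcal{L}^{p,\beta}_{\mathcal{U}}}\le\liminf_{j}\Vert\mu_{k_j}\Vert_{\mathcal{L}^{p,\beta}_{\mathcal{U}}}$, i.e.\ lower semicontinuity, which is the wrong direction for what is needed. To show that the limit satisfies (\ref{eqn(*)}) (equivalently its uniform version (\ref{eqn3.1})), you must bound $\sup_{y}r^{-\beta}\int_{B(y,r)}(1+\vert\xi\vert^{p})^{\alpha}\vert\mu(\xi)\vert^{p}\,d\xi$ for $\mu$ itself \emph{from above} in terms of the approximants; a.e.\ convergence cannot supply such a bound, since the vanishing condition is not a pointwise property and is not preserved under a.e.\ limits without norm control. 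Your phrase that the condition ``is transported to $\mu$ by the same a.e.\ convergence'' is therefore unjustified as stated, and the reformulation (\ref{eqn3.1}) concerns uniformity in the radius, not stability under perturbation. The triangle-inequality splitting displayed above --- which appears nowhere in your proposal --- is exactly the missing idea.
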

\begin{proof}
We show that $\mathcal{L}^{p,\beta}_{\mathcal{U}}$ is closed in $L^{p,\beta}$ since it contains rapidly decreasing ultradifferentiable functions on $\mathbb{C}^n$. To do this, denote $(\mu_{m})_{m}$ as a sequence of tempered ultradistributions in $\mathcal{L}^{p,\beta}_{\mathcal{U}}$ converging to $\mu$ in $L^{p,\beta}$. For any $\varepsilon>0$ there exists $\bar{k}\in\mathbb{N}$ such that
\[(1+\vert\xi\vert^{p})^{\alpha}(\mu-\mu_{\bar{k}})\leq \Vert\mu-\mu_{\bar{k}}\Vert_{p,\beta}^{p}<\frac{\varepsilon}{2^{p}2^{q}}\quad\text{for}\quad q>0\]
Hence we have from
\[\sup_{\xi\in\mathbb{C}^{n}}(1+\vert\xi\vert^{p})^{\alpha}\vert\mu(\xi)\vert<\infty\]
Then $\forall$ $\bar{n}\geq n_{0}$
\[\int_{B(y,r)}(1+\vert\xi\vert^{p})^{\alpha}\vert\mu(\xi)\vert^{p}d\xi\leq 2^{p}\int_{B(y,r)}(1+\vert\xi\vert^{p})^{\alpha}\vert\mu(\xi)-\mu_{\bar{k}}(\xi)\vert^{p}d\xi+2^{q}\int_{B(y,r)}(1+\vert\xi\vert^{p})^{\alpha}\vert\mu_{\bar{k}}(\xi)\vert^{p}d\xi<\varepsilon\]
for arbitrary large $\bar{n}$.
\end{proof}

We present the preservation of polynomial growth condition as related to the convolution operator.
\begin{Th}
\label{th3.8}
Let $0\leq \beta\leq n$ and $1\leq p<\infty$. The preduals  $L^{p,\beta}_{0}, L^{p,\beta}_{\infty}, L^{p,\beta}_{*}$ and the dual space $\mathcal{L}^{p,\beta}_{\mathcal{U}}$ are invariant with respect to convolution with integrable operator.
\end{Th}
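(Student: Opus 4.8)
The plan is to read ``convolution with integrable operator'' as the operator $T_{g}:\mu\mapsto \mu*g$ for a fixed $g\in L^{1}(\mathbb{C}^{n})$, to regard $T_{g}$ as a bounded map on the ambient space $L^{p,\beta}$, and then to verify one space at a time that each of the four subspaces is carried into itself. The backbone of every estimate is the Young-type bound $\Vert\mu*g\Vert_{p,\beta}\leq\Vert g\Vert_{1}\Vert\mu\Vert_{p,\beta}$ recorded in (\ref{eqnref*}), refined at the level of a single ball by Minkowski's integral inequality: writing $(\mu*g)(x)=\int g(z)\mu(x-z)\,dz$ and applying Minkowski inside the $L^{p}(B(y,r))$ norm gives
\[\Big(r^{-\beta}\int_{B(y,r)}\vert(\mu*g)(x)\vert^{p}\,dx\Big)^{1/p}\leq \int_{\mathbb{C}^{n}}\vert g(z)\vert\Big(r^{-\beta}\int_{B(y-z,r)}\vert\mu(w)\vert^{p}\,dw\Big)^{1/p}\,dz.\]
Because the centre $y$ ranges over the whole space, the shift $y\mapsto y-z$ is absorbed: after taking $\sup_{y}$ the inner factor becomes independent of $z$ and equals the local Morrey quantity of $\mu$ at radius $r$, so the right-hand side is bounded by $\Vert g\Vert_{1}$ times that quantity.

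For the two genuinely vanishing spaces this makes the claim immediate. If $\mu\in L^{p,\beta}_{0}$, the displayed bound yields $\sup_{y}r^{-\beta}\int_{B(y,r)}\vert\mu*g\vert^{p}\leq\Vert g\Vert_{1}^{p}\,\sup_{y}r^{-\beta}\int_{B(y,r)}\vert\mu\vert^{p}$, and letting $r\to0$ the right side tends to $0$ by (\ref{eqnvo}); hence $\mu*g\in L^{p,\beta}_{0}$. The identical estimate, now sending $r\to\infty$ and invoking (\ref{eqnv8}), shows that $T_{g}$ preserves $L^{p,\beta}_{\infty}$.

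The truncation space $L^{p,\beta}_{*}$ is where I expect the main obstacle, since the weight $\chi_{N}$ in (\ref{eqnv*}) does not commute with translation and the clean shift-absorption above fails. Here I would split $g=g\chi_{B(0,R)}+g\chi_{B(0,R)^{c}}$. The far part contributes at most $\Vert g\chi_{B(0,R)^{c}}\Vert_{1}\Vert\mu\Vert_{p,\beta}<\varepsilon$ for $R$ large, by Young. For the near part, the shifted truncation $\chi_{N}(\cdot-z)$ with $\vert z\vert\leq R$ is dominated by $\chi_{N+R}$, so the truncated integral of $\mu*g$ is controlled by the corresponding truncated integral of $\mu$ at the enlarged radius $N+R$; taking $\limsup_{r,N\to\infty}$ and using (\ref{eqnv*}) for $\mu$ forces it to vanish. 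Balancing $R$ (chosen to kill the tail) against the enlarged truncation radius is the delicate point of the argument.

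Finally, for $\mathcal{L}^{p,\beta}_{\mathcal{U}}$ I would exploit associativity of convolution, $\psi_{l}*(\mu*g)=(\psi_{l}*\mu)*g$, so that the smooth maximal function satisfies $M_{\psi}(\mu*g)\lesssim (M_{\psi}\mu)*\vert g\vert$ and the Morrey estimate transfers through the first part; the one genuinely new point is that the polynomial weight survives the translation by $z$, which I would handle by a Peetre-type inequality of the form $(1+\vert\xi\vert^{p})^{\alpha}\lesssim(1+\vert z\vert^{p})^{\vert\alpha\vert}(1+\vert\xi-z\vert^{p})^{\alpha}$, requiring $g$ to be integrable against this weight. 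With boundedness of the maximal function thus preserved, I would then close the argument uniformly for all four spaces by invoking their closedness in $L^{p,\beta}$ (Remark \ref{Rem3.7} and Lemma \ref{lem3.8}): the images $T_{g}\mu$ arise as norm-limits of images of approximants, and closedness guarantees these limits remain inside the respective subspace.
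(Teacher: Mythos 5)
Your core estimate is the same as the paper's: its proof of Theorem \ref{th3.8} opens with exactly your Minkowski-at-a-single-ball computation (there labelled an application of H\"older's inequality), absorbing the shift $y\mapsto y-\xi$ into the supremum over centres, and from that single display the paper simply asserts the preservation of $L^{p,\beta}_{0}$ and $L^{p,\beta}_{\infty}$ by citing Theorem 3.8 of \cite{26*}, rather than carrying out your $r\to 0$ and $r\to\infty$ limits explicitly. Where you genuinely diverge is in the remaining two spaces. For $L^{p,\beta}_{*}$ the paper offers no argument at all (the case is silently subsumed in the citation), so your near/far splitting of $g$ is added content; note, however, that under the natural reading of $\chi_{N}$ as the cut-off outside $B(0,N)$ your domination runs the wrong way: for $\vert z\vert\leq R$ one has $\chi_{N}(\cdot-z)\leq\chi_{N-R}$, not $\chi_{N+R}$, since the translated exterior region sits inside a \emph{larger} exterior region; the argument survives with this trivial fix because $N-R\to\infty$ as well. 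For $\mathcal{L}^{p,\beta}_{\mathcal{U}}$ the paper argues directly on the weighted quantity of (\ref{eqn30}): Minkowski, then the change of variables $u=\xi-z$, which turns the weight into $(1+\vert u+z\vert^{p})^{\alpha}$, ending with a supremum ``over $u+z\in\mathbb{C}^{n}$'' --- a step that is not actually justified, since the supremum of an unbounded weight over all of $\mathbb{C}^{n}$ is infinite unless the translated weight is first compared with the untranslated one. Your Peetre-type inequality $(1+\vert\xi\vert^{p})^{\alpha}\lesssim(1+\vert z\vert^{p})^{\vert\alpha\vert}(1+\vert\xi-z\vert^{p})^{\alpha}$ is precisely the missing comparison, and your observation that it forces $g$ to be integrable against the polynomial weight (mere $g\in L^{1}$ does not suffice) identifies a hypothesis the theorem as stated omits --- a genuine sharpening rather than a detour; your maximal-function route via $\psi_{l}*(\mu*g)=(\psi_{l}*\mu)*g$ matches the characterization in Definition \ref{def1*}, whereas the paper works with the integral norm of Definition \ref{def}, the two being interchangeable here. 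Your closing appeal to closedness (Remark \ref{Rem3.7}, Lemma \ref{lem3.8}) is harmless but redundant once the direct estimates are in hand.
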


\begin{proof}
Let $\Phi\in L^1$ and $\mu\in L^{p,\beta}$ then $\mu*\Phi\in L^{p,\beta}$. Let $y\in\Omega$ and $r>0$. Then by Holder's inequality, we get
\begin{align*}
\frac{1}{r^{\beta}}\int_{B(y,r)}\vert (\mu*\Phi)(x)\vert^{p}dx & \leq \frac{1}{r^{\beta}}\Big[\int_{\mathbb{C}^n}\Big(\int_{B(y,r)}\vert \mu(x-\xi)\phi(\xi)\vert^{p}dx\Big)^{\frac{1}{p}}d\xi\Big]^{p}\\
      &=\frac{1}{r^{\beta}}\Big[\int_{\mathbb{C}^n}\vert\Phi(\xi)\vert^{p} \Big(\int_{B(y-\xi,r)}\vert \mu(u)\vert^{p}du\Big)^{\frac{1}{p}}d\xi\Big]^{p}\\
   \leq \Vert\Phi\Vert_{1}^{p}\sup_{\psi\in\mathbb{C}^{n}}r^{-\beta}\int_{B}\vert\mu(u)\vert^{p}du
\end{align*}
It shows that $\mu\in L^{p,\beta}_{0}\implies \mu*\Phi\in L^{p,\beta}_{0}$ and $\mu\in L^{p,\beta}_{\infty}\implies \mu*\Phi\in L^{p,\beta}_{\infty}$ from Theorem 3.8 \cite{26*}.\\
Next we consider the preservation of condition (\ref{eqn30}) by convolution operator. Let $\mu\in\mathcal{L}_{\mathcal{U}}^{p,\beta}$, we get
\begin{equation}
\label{eqnconvolution}
\Big[\int_{B(0,r)}(1+\vert\xi\vert^{p})^{\alpha}\vert(\mu*\Phi)(\xi)\vert^{p}d\xi\Big]^{\frac{1}{p}}\leq \int_{\mathbb{C}^{n}}\vert\Phi(z)\vert^{p} \Big(\int_{B(0,r)}(1+\vert\xi\vert^{p})^{\alpha}\vert f(\xi-z)\vert^{p}d\xi\Big)^{\frac{1}{p}}dz
\end{equation}
From the change of variables on the right-hand side of (\ref{eqnconvolution}). we get
\[\Big[\int_{B(0,r)}(1+\vert\xi\vert^{p})^{\alpha}\vert(\mu*\Phi)(\xi)\vert^{p}d\xi\Big]^{\frac{1}{p}}\leq \int_{\mathbb{C}^{n}}\vert\Phi(z)\vert^{p} \Big(\sup_{\psi\in\mathbb{C}^{n}}\int_{B(\psi,r)}(1+\vert u+z\vert^{k})^{p}\vert \mu(u)\vert^{p}du\Big)^{\frac{1}{p}}dz\]
Since this is uniform with respect to the origin, we obtain
\begin{align*}
\frac{1}{r^\beta}\int_{B(0,r)}(1+\vert\xi\vert^{p})^{\alpha}\vert(\mu*\Phi)(\xi)\vert^{p}d\xi & \leq \frac{1}{r^\beta} \Big[\int_{\mathbb{C}^{n}}\Big(\int_{B(0,r)}(1+\vert\xi\vert^{p})^{\alpha}\vert \mu(\xi-z)\Phi(z)\vert^{p}d\xi\Big)^{\frac{1}{p}}dz\Big]^{p}\\
      &\leq \sup_{r>0}\frac{1}{r^\beta} \Big[\int_{\mathbb{C}^{n}}\Big(\int_{B(0,r)}(1+\vert\xi\vert^{p})^{\alpha}\vert \mu(\xi-z)\Phi(z)\vert^{p}d\xi\Big)^{\frac{1}{p}}dz\Big]^{p}\\
      &\leq \sup_{\xi\in\mathbb{C}^{n}}\sup_{r>0}\frac{1}{r^\beta} \Big[\int_{\mathbb{C}^{n}}\Big(\int_{B(0,r)}(1+\vert\xi\vert^{p})^{\alpha}\vert \mu(\xi-z)\Phi(z)\vert^{p}d\xi\Big)^{\frac{1}{p}}dz\Big]^{p}\\
      &\leq \sup_{u+z\in\mathbb{C}^{n}}\int_{\mathbb{C}^{n}}\vert\Phi(z)\vert^{p} \Big(\sup_{\psi\in\mathbb{C}^{n}}\int_{B(\psi,r)}(1+\vert u+z\vert^{p})^{\alpha}\vert \mu(u)\vert^{p}du\Big)^{\frac{1}{p}}dz
\end{align*}
This shows that $(1+\vert\xi\vert^{p})^{\alpha}(\mu*\Phi)(\xi)$ converges to zero as $\vert\xi\vert\to\infty$.
\end{proof}

\section{Embeddings in Morrey Spaces}
Here we are concerned with the relation between different versions of Morrey spaces or the relationship between the tempered Morrey spaces, Morrey pre-duals and spaces of tempered ultradistributions. We dwell with questions about embedding of related spaces under consideration.\\
We begin with some known results on strict embeddings.

\begin{Th}
\label{th4.1}
Let  $\beta\in (0,n)$ and $p\in [1,\infty)$. Then there exist functions in $L^{p,\beta}_{0}\bigcap L^{p,\beta}_{\infty}$ in which the property (\ref{eqnv*}) fails.
\end{Th}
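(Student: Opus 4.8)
The plan is to prove the statement by exhibiting an explicit witness, namely the function $\psi$ singled out in the Remark preceding the theorem. I would take $\psi=\sum_{k\ge 2}\chi_{B_k}$, a superposition of indicator functions of unit balls $B_k=B(x_k,1)$ whose centres $x_k=2^{k}e_1$ march off to infinity along the first coordinate axis with exponentially increasing gaps, so that the $B_k$ have pairwise disjoint supports. The whole theorem then reduces to three verifications: that $\psi$ lies in $L^{p,\beta}$ with finite norm, that it satisfies the two vanishing conditions (\ref{eqnvo}) and (\ref{eqnv8}) so that $\psi\in L^{p,\beta}_{0}\cap L^{p,\beta}_{\infty}$, and that it fails the truncation condition (\ref{eqnv*})--(\ref{eqn}) defining $L^{p,\beta}_{*}$.

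Since the balls are disjoint, $0\le\psi\le 1$ and $\psi^{p}=\psi$, which turns the vanishing estimates into geometric counting arguments. For the vanishing-at-origin condition I would note that for any centre $y$ and any small $r$ one has $r^{-\beta}\int_{B(y,r)}\psi^{p}\,dx\le r^{-\beta}\lvert B(y,r)\rvert=c\,r^{d-\beta}$, where $d$ is the real dimension of the ambient space; since $\beta$ lies strictly below that dimension, the exponent $d-\beta$ is positive and the expression tends to $0$ as $r\to0$, giving (\ref{eqnvo}). For finiteness of the norm and the vanishing-at-infinity condition (\ref{eqnv8}) the key estimate is a bound, \emph{uniform over all centres} $y$, on the number $\mathcal N(y,r)$ of bumps meeting $B(y,r)$. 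Because the centres $x_k=2^{k}e_1$ are exponentially separated, any ball of radius $r$ can capture at most $O(\log r)$ of them, whence $r^{-\beta}\int_{B(y,r)}\psi^{p}\,dx\le c\,r^{-\beta}\mathcal N(y,r)\lesssim r^{-\beta}\log r$. The right-hand side is bounded for all $r$ (the supremum being attained near $r\approx1$, where it equals $c$) and tends to $0$ as $r\to\infty$, giving simultaneously $\psi\in L^{p,\beta}$ and $\psi\in L^{p,\beta}_{\infty}$.

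It remains to show $\psi\notin L^{p,\beta}_{*}$. The decisive observation is that a single bump already carries a fixed normalised mass: taking $y=x_k$ and $r=1$ gives $\tfrac{1}{r^{\beta}}\int_{B(x_k,1)}\psi^{p}\,dx=c>0$, a constant independent of $k$. Since $\lvert x_k\rvert=2^{k}\to\infty$, for every truncation level $N$ there is an index $k$ for which $B(x_k,1)$ lies entirely in the far region $\{\lvert x\rvert>N\}$ retained by $\chi_N$; hence the normalised supremum in (\ref{eqnv*}) stays $\ge c$ for every $N$, and its limit as $N\to\infty$ is at least $c>0$. Thus property (\ref{eqnv*}) fails for $\psi$, and $\psi\in\bigl(L^{p,\beta}_{0}\cap L^{p,\beta}_{\infty}\bigr)\setminus L^{p,\beta}_{*}$, which is exactly the assertion.

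The step I expect to be the main obstacle is the uniform counting estimate for $\mathcal N(y,r)$: one must control the worst case over \emph{all} ball centres $y$, not merely centres lying on the axis, and confirm that exponential separation of the $x_k$ forces the count to grow only logarithmically in $r$; obtaining this bound uniformly in $y$ is precisely what yields both finiteness of the Morrey norm and the vanishing at infinity. A secondary point requiring care is to fix the precise meaning of the truncation $\chi_N$ in (\ref{eqnv*})--(\ref{eqn}), since the definition as written is slightly ambiguous; once $\chi_N$ is read as the cut-off $\chi_{\{\lvert x\rvert>N\}}$ retaining the far field and the supremum is normalised by $r^{-\beta}$ as in (\ref{eqnv*}), the argument above goes through verbatim.
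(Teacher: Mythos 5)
Your proposal is correct and follows essentially the paper's own route: the paper gives no written proof of Theorem \ref{th4.1} beyond the Remark preceding it, which asserts exactly your witness $\psi=\sum_{k\ge 2}\chi_{B_k}$ --- where the Remark's centres $2^{-t}e_{1}$ are an evident misprint for $2^{t}e_{1}$ (otherwise the unit balls all overlap near the origin and the sum diverges there), a correction you tacitly make. Your verifications --- the bound $r^{n-\beta}$ for small $r$, the uniform counting estimate $\mathcal{N}(y,r)\lesssim\log(2+r)$ giving both finiteness of the Morrey norm and membership in $L^{p,\beta}_{\infty}$, and the single-bump lower bound defeating (\ref{eqnv*}) under the natural reading $\chi_{N}=\chi_{\{\vert x\vert>N\}}$ with the limit $N\to\infty$ taken after the supremum over $y$ and $r$ --- are sound and simply supply the details the paper omits.
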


\begin{Rem}
 In Remark \ref{rem*},  emphasis was laid on the roles of the parameters $p$ and $\beta$ in the tempered classical Morrey spaces both in local and global versions. In what follows, we present the extension of the scaling law or the multiplication law from the classical Morrey space to the tempered classical Morrey spaces for all $0\leq \beta<n$ and $1\leq p<\infty$.
\end{Rem}

\begin{Th}
\label{thmscaling}
Given $1\leq p<\infty$ and $0\leq \beta<n$ for $n\in\mathbb{N}$. Then for all $\mu\in L_{\mathcal{U}}^{p,\beta}$ and $l>0$, the following property holds:
\begin{equation}
\label{eqnscaling}
\Vert\mu(l\cdot)\Vert_{L_{\mathcal{U}}^{p,\beta}}\leq l^{-n/p}\Vert\mu\Vert_{L_{\mathcal{U}}^{p,\beta}}
\end{equation}
\end{Th}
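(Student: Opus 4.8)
The plan is to reduce \eqref{eqnscaling} to a change of variables, following the template of the classical dilation law \eqref{eqnref} but keeping the weight $(1+\vert\xi\vert^{p})^{\alpha}$ under control throughout. Writing $\mu_{l}:=\mu(l\cdot)$, I would first check that $\mu_{l}$ is again an admissible element of $L^{p,\beta}_{\mathcal{U}}$, so that the left-hand side of \eqref{eqnscaling} is even meaningful: since $\mathcal{U}(\mathbb{C}^{n})$ is stable under the dilation $\gamma\mapsto\gamma(l\cdot)$, the smooth maximal function $M_{\gamma}\mu_{l}$ of Definition \ref{def1*} is finite whenever $M_{\gamma}\mu$ is, and the growth/vanishing condition \eqref{eqn(*)} is inherited. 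With admissibility in hand, the whole estimate reduces to controlling, uniformly in the center $y$ and radius $r$, the quantity
\[
\frac{1}{r^{\beta}}\int_{B(y,r)}(1+\vert\xi\vert^{p})^{\alpha}\,\vert\mu(l\xi)\vert^{p}\,d\xi .
\]

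Next I would substitute $u=l\xi$ in the defining weighted integral of Definition \ref{def}. The measure transforms as $d\xi=l^{-n}\,du$, and it is precisely this Jacobian that, after extracting the $p$-th root, produces the asserted exponent $-n/p$ in \eqref{eqnscaling}. The ball $B(y,r)$ is carried to $B(ly,lr)$; because the supremum defining the norm runs over all centers in $\mathbb{C}^{n}$ and all radii $r>0$, the reparametrization $(y,r)\mapsto(ly,lr)$ is a bijection of the index set and merely relabels the supremum without changing it. The only term left to handle is the weight, which becomes $(1+\vert u/l\vert^{p})^{\alpha}$ and is the single factor that does not scale homogeneously — this is the source of the inequality rather than an equality.

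The heart of the argument is therefore the weight. I would dominate $(1+\vert u/l\vert^{p})^{\alpha}$ by a constant multiple of $(1+\vert u\vert^{p})^{\alpha}$ using the monotonicity of $t\mapsto(1+t)^{\alpha}$: for $l\ge 1$ one has $\vert u/l\vert\le\vert u\vert$, giving the bound directly, while for $0<l<1$ the elementary estimate $1+\vert u\vert^{p}/l^{p}\le l^{-p}(1+\vert u\vert^{p})$ absorbs the extra power of $l$ into an admissible constant. Inserting this domination and recollecting the surviving $l^{-n}$ factor leaves the weighted average of $\mu$ over $B(ly,lr)$, which is bounded by $\Vert\mu\Vert_{L^{p,\beta}_{\mathcal{U}}}^{p}$; taking the supremum over $(y,r)$ and the $p$-th root then yields \eqref{eqnscaling}.

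I expect the main obstacle to be exactly this weight step: one must ensure that the domination constant is independent of $\mu$ and of $(y,r)$, so that no spurious $\beta$-dependent factor survives and the clean exponent $-n/p$ is produced — in contrast to the exponent $(\beta-n)/p$ appearing in the unweighted classical identity \eqref{eqnref}. This is the place where the weighted, maximal-function structure of $L^{p,\beta}_{\mathcal{U}}$ does genuine work, and where the computation departs substantively from the classical scaling law; the remaining steps are routine changes of variables and suprema.
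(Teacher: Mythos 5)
Your overall route is the same as the paper's: substitute $u=l\xi$, pick up the Jacobian $l^{-n}$ (hence $l^{-n/p}$ after the $p$-th root), carry $\bar{B}(y,r)$ to $\bar{B}(ly,lr)$, and then deal with the weight. The paper is in fact less careful than you at the two delicate points: it writes the Jacobian as $(lr)^{-n}$ rather than $l^{-n}$, ends with the mismatched prefactor $r^{-(\beta+n)}$ against balls $\bar{B}(ly,lr)$, and never transforms the weight at all, closing with the one-line remark that $(1+\vert\xi\vert^{p})^{\alpha}\vert\mu(\xi)\vert$ is bounded. So you correctly identified where the real work lies; the problem is that your proposed fixes for those two points do not actually close, and these are genuine gaps, not cosmetic ones.

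First, the reparametrization $(y,r)\mapsto(ly,lr)$ is \emph{not} a pure relabelling of the supremum. After the substitution the prefactor is still $r^{-\beta}$ while the ball is $B(ly,lr)$; setting $r'=lr$, $y'=ly$ converts $r^{-\beta}$ into $l^{\beta}(r')^{-\beta}$, so the relabelled supremum equals $l^{\beta/p}$ times the Morrey norm (weight aside). This is exactly why the classical law \eqref{eqnref} carries the exponent $(\beta-n)/p$ rather than $-n/p$. For $l\geq 1$ your weight domination uses $\vert u/l\vert\leq\vert u\vert$, i.e.\ constant $1$, so nothing in your argument compensates the leftover factor $l^{\beta/p}\geq 1$: you obtain at best $l^{(\beta-n)/p}\Vert\mu\Vert_{L^{p,\beta}_{\mathcal{U}}}$, which is \emph{weaker} than the asserted $l^{-n/p}\Vert\mu\Vert_{L^{p,\beta}_{\mathcal{U}}}$ whenever $\beta>0$. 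Second, for $0<l<1$ the estimate $1+\vert u\vert^{p}/l^{p}\leq l^{-p}(1+\vert u\vert^{p})$, raised to the power $\alpha$, produces the constant $l^{-p\vert\alpha\vert}$, hence an extra factor $l^{-\vert\alpha\vert}\to\infty$ as $l\to 0$ after the $p$-th root. Since \eqref{eqnscaling} claims the exact constant $l^{-n/p}$ with no multiplicative slack, this constant is not ``admissible'' and cannot be absorbed; the small-$l$ case therefore does not close either. What your method honestly yields is $\Vert\mu(l\cdot)\Vert\leq C_{l}\,l^{(\beta-n)/p}\Vert\mu\Vert$ with $C_{l}=\max\{1,l^{-\vert\alpha\vert}\}$, which recovers \eqref{eqnref} when $\alpha=0$ but does not establish the theorem as stated; the paper's own computation, suffering from the same two defects, does not establish it either.
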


\begin{proof}
Let $l,\xi\in\mathbb{C}^n$ then $l\xi\in\mathbb{C}^n$. Thus we have

\begin{align*}
\Vert\mu\Vert_{\mathcal{L}_{\mathcal{U}}^{p,\beta}}& =\sup_{\xi\in\mathbb{C};y\in\Omega;r>0}\frac{1}{r^\beta}\Big(\int_{\bar{B}(y,r)}(1+\vert\xi\vert^{p})^{\alpha}\vert\mu(l\xi)\vert^{p}d\xi\Big)^{\frac{1}{p}}\\
   &=\sup_{\xi\in\mathbb{C};y\in\Omega;r>0}\frac{1}{r^\beta}\Big(\frac{1}{(lr)^{n}}\int_{\bar{B}(ly,lr)}(1+\vert\xi\vert^{p})^{\alpha}\vert\mu(\xi)\vert^{p}d\xi\Big)^{\frac{1}{p}}\\
     &=\sup_{\xi\in\mathbb{C};y\in\Omega;r>0}\frac{1}{r^\beta}\Big(\frac{1}{l^{n}r^{n}}\int_{\bar{B}(ly,lr)}(1+\vert\xi\vert^{p})^{\alpha}\vert\mu(\xi)\vert^{p}d\xi\Big)^{\frac{1}{p}}\\
    &= l^{-n/p}\sup_{\xi\in\mathbb{C};y\in\Omega;r>0}\frac{1}{r^{\beta+n}}\Big(\int_{\bar{B}(ly,lr)}(1+\vert\xi\vert^{p})^{\alpha}\vert\mu(\xi)\vert^{p}d\xi\Big)^{\frac{1}{p}}
\end{align*}
Since $\displaystyle \Big\{ (1+\vert\xi\vert^{p})^{\alpha}\vert\mu(\xi)\vert \Big\}<\infty$. Hence the result follows.
\end{proof}

It is important to show the local integrability of tempered ultradistributions in the classical Morrey space via the parameter $\beta$ as can be seen in the next result:

\begin{Th}
\label{thmlocalintegral}
Let $0<p<p'<\beta<\infty$. Then $\displaystyle L_{\mathcal{U}}^{p,\beta}\cap L_{c}^{0}\cap L_{0,\infty}^{p,\beta}\setminus L^{\text{loc}}_{p',\beta}\neq \emptyset.$
\end{Th}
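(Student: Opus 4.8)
The plan is to prove nonemptiness constructively: I would exhibit a single explicit function lying in $L_{\mathcal{U}}^{p,\beta}\cap L_{c}^{0}\cap L_{0,\infty}^{p,\beta}$ yet failing the local $p'$-Morrey integrability. The natural candidate is a radial power singularity truncated to a compact set. Working in the nontrivial regime $\beta<n$ (so that $n-\beta>0$; otherwise the Morrey spaces degenerate to $\{0\}$ by Remark~\ref{rem*}), I would fix an exponent $\theta$ in the open interval
\[
\Big(\tfrac{n-\beta}{p'},\ \tfrac{n-\beta}{p}\Big),
\]
which is nonempty \emph{precisely because} $p<p'$, and set $g(\xi)=|\xi|^{-\theta}\psi(\xi)$, where $\psi\in C_{c}^{\infty}(\mathbb{C}^{n})$ equals $1$ near the origin and is supported in the unit ball; by construction $g\in L_{c}^{0}$. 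The computational backbone is the elementary power count: whenever $\theta q<n$,
\[
r^{-\beta}\int_{B(0,r)}|\xi|^{-\theta q}\,d\xi\simeq r^{\,n-\theta q-\beta},
\]
while for balls whose centres lie away from the support the average is $O(r^{n-\beta})$.

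Taking $q=p$ and using $\theta<(n-\beta)/p$ yields $n-\theta p-\beta>0$, so the supremum over centres of the $p$-average tends to $0$ as $r\to0$, giving $g\in L^{p,\beta}_{0}$. Since $\theta p<n-\beta<n$, one also has $g\in L^{p}$, and because $g$ is compactly supported, for large $r$ one estimates $r^{-\beta}\int_{B(y,r)}|g|^{p}\le r^{-\beta}\Vert g\Vert_{p}^{p}\to0$, so $g\in L^{p,\beta}_{\infty}$. Together these give $g\in L^{p,\beta}_{0,\infty}$.

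For membership in $L_{\mathcal{U}}^{p,\beta}$ I would argue that, being compactly supported and locally integrable, $g$ defines a compactly supported, hence tempered, ultradistribution in $\mathcal{U}'(\mathbb{C}^{n})$ (Definition~\ref{defcompact}); on its compact support the polynomial weight $(1+|\xi|^{p})^{\alpha}$ is bounded, so the weighted Morrey quantity in Definition~\ref{def} / \eqref{eqn30} is controlled by a constant multiple of the ordinary Morrey quantity already shown finite, and the same boundedness of the weight controls the smooth maximal function of Definition~\ref{def1*}. For the failure $g\notin L^{\mathrm{loc}}_{p',\beta}$ I would take $q=p'$ and invoke $\theta>(n-\beta)/p'$: if $\theta p'\ge n$ the integral $\int_{B(0,r)}|\xi|^{-\theta p'}d\xi$ diverges for every $r>0$, whereas if $\theta p'<n$ the displayed estimate gives $r^{-\beta}\int_{B(0,r)}|\xi|^{-\theta p'}d\xi\simeq r^{\,n-\theta p'-\beta}$ with exponent $n-\theta p'-\beta<0$, which blows up as $r\to0$. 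In either case the local $p'$-Morrey average is unbounded, so $g$ escapes $L^{\mathrm{loc}}_{p',\beta}$, and the intersection is nonempty.

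I expect the only genuinely delicate step to be the $L_{\mathcal{U}}^{p,\beta}$-membership through the maximal-function/ultradistribution formalism, since for $0<p\le1$ one cannot simply cite boundedness of a maximal operator on Morrey spaces; the cleanest route is to verify finiteness directly for the specific $g$, exploiting that convolution with $\gamma\in\mathcal{U}$ preserves the singularity order $|\xi|^{-\theta}$ near the origin while inheriting rapid decay at infinity. All remaining steps are routine power-counting for radial singularities, and the essential structural input is simply that $p<p'$ makes the admissible exponent window $\big((n-\beta)/p',(n-\beta)/p\big)$ nonempty.
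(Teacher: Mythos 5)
Your construction is genuinely different from the paper's argument, and considerably more concrete. The paper does not exhibit an explicit function at all: it gestures at a self-referential ``example'' $\mu\equiv\sum\mu\chi_{B(0,l)}$, asserts the vanishing and compact-support properties from that, and closes by claiming $\liminf\big(\Vert\mu\Vert_{L^{p'}_{\text{loc}}}/\Vert\mu\Vert_{L^{p,\beta}_{\mathcal{U}}}\big)<\infty$ --- a bound which, even taken at face value, points in the \emph{wrong} direction: non-membership in $L^{\text{loc}}_{p',\beta}$ would require the local $p'$-Morrey quantity to be unbounded, not for a ratio involving it to be finite. Your radial power singularity $g(\xi)=|\xi|^{-\theta}\psi(\xi)$ with $\theta\in\big((n-\beta)/p',(n-\beta)/p\big)$ supplies exactly what that argument lacks: the window is nonempty precisely because $p<p'$, the power count $r^{-\beta}\int_{B(0,r)}|\xi|^{-\theta q}\,d\xi\simeq r^{n-\theta q-\beta}$ gives membership in $L^{p,\beta}_{0}\cap L^{p,\beta}_{\infty}$ (exponent positive for $q=p$) and failure of the local $p'$-condition (exponent negative, or outright divergence, for $q=p'$), and your restriction to $\beta<n$ is the right repair of the statement, which is vacuous for $\beta>n$ under Definition \ref{Def1} by Remark \ref{rem*}. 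What your route buys is an actual proof; what the paper's route buys is, at best, a sketch of which properties an example would need to have.

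One sub-claim in your final paragraph is wrong as stated, and it sits exactly at the step you yourself flagged as delicate. The smooth maximal function $M_{\gamma}g=\sup_{t>0}|\gamma_t*g|$ does \emph{not} inherit rapid decay at infinity when $\int g\neq 0$: taking $t\simeq|\xi|$ shows $M_{\gamma}g(\xi)\gtrsim|\xi|^{-n}$, so for $0<p<1$ the large-ball contribution to the Morrey quantity behaves like $r^{n(1-p)-\beta}$, which is unbounded unless $\beta\geq n(1-p)$ --- a condition the hypotheses $0<p<p'<\beta$ do not guarantee. This is consistent with the paper's own Proposition \ref{prop}, which demands vanishing moments up to order $\lfloor((1-p')/p)n\rfloor$ for compactly supported functions to lie in the maximal-function space of Definition \ref{def1*}. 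Your argument is complete under the weighted-integral reading of Definition \ref{def} (where boundedness of $(1+|\xi|^p)^{\alpha}$ on the unit ball does control everything), and for $p\geq1$ in the maximal-function reading; for $p<1$ in the latter reading you should correct $g$ by subtracting a smooth compactly supported bump matching the offending moments, which leaves the singularity at the origin, the vanishing conditions, and the failure of the local $p'$-bound all intact.
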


\begin{proof}
Since $p<p'<\beta$, it implies that $\Vert\cdot\Vert_{p'}\leq \Vert\cdot\Vert_{p}$. We prove that $\displaystyle \mu \in L_{\mathcal{U}}^{p,\beta}\cap L_{c}^{0}\cap L_{0,\infty}^{p,\beta} $ is not empty provided $\mu \notin L^{\text{loc}}_{p',\beta}$, it shows that $\mu$ must have a compact, and must satisfy both vanishing and slow growth condition. To see this, since $\displaystyle \Big\{ (1+\vert\xi\vert^{p})^{\alpha}\vert\mu(x)\vert \Big\}<\infty$, using an example of a function to back up our claim as $\mu\equiv\sum\mu\chi_{B(0,l)}$, we see that $\mu\in L_{0,\infty}^{p,\beta}$ for as $l\to \infty$ and $l\to 0$. Similarly $\mu\in L_{c}^{0}$ since it has a support in $\bar{B}(0,l)$. Then take the $\liminf$ of the function $\mu$, we have that $\displaystyle \liminf\Big( \frac{\Vert\mu\Vert_{L^{p'}_{\text{loc}}}}{\Vert\mu\Vert_{L_{\mathcal{U}}^{p,\beta}}}\Big)<\infty$. Hence the proof.
\end{proof}

\begin{Cor}
\label{cor4.2}
Let $\beta\in (0,n)$ and $p\in [1,\infty)$. Then we have $\displaystyle L^{p,\beta}_{0,\infty,*}\subsetneqq L^{p,\beta}_{0}\bigcap L^{p,\beta}_{\infty}\subsetneqq L^{p,\beta}_{0} \subsetneqq L^{p,\beta}$
\end{Cor}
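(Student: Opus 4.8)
The plan is to prove the three inclusions one at a time, observing first that every set-theoretic containment in the chain is immediate: $L^{p,\beta}_{0,\infty,*}\subseteq L^{p,\beta}_{0}\cap L^{p,\beta}_{\infty}$ holds because the left-hand side is by Definition \ref{Def3.5} a triple intersection whose third factor is simply dropped, $L^{p,\beta}_{0}\cap L^{p,\beta}_{\infty}\subseteq L^{p,\beta}_{0}$ is the removal of a further factor, and $L^{p,\beta}_{0}\subseteq L^{p,\beta}$ because $L^{p,\beta}_{0}$ is a (pre-dual) subspace of $L^{p,\beta}$. Hence the entire content of the corollary is the strictness of each step, and I would devote the proof to exhibiting, at each stage, a function in the larger space that fails the defining vanishing condition of the smaller one.

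For the two outer steps I can quote earlier results. The first strict inclusion is exactly Theorem \ref{th4.1}: it produces a function lying in $L^{p,\beta}_{0}\cap L^{p,\beta}_{\infty}$ for which the truncation condition (\ref{eqnv*}) fails, so this function is not in $L^{p,\beta}_{*}$ and therefore not in the triple intersection $L^{p,\beta}_{0,\infty,*}$; the function $\psi$ assembled from the unit balls $B(2^{-t}e_{1},1)$ in the remark following Definition \ref{Def3.5} is an explicit witness. The third strict inclusion is recorded in Remark \ref{rem2}, where it is noted that $L^{p,\beta}_{0}$ is a proper subspace of $L^{p,\beta}$ as soon as $\beta>0$; a concrete witness is the homogeneous profile $h(x)=|x|^{-(n-\beta)/p}$, for which a one-line polar-coordinate computation (the $p$-th power turns the exponent into $n-\beta$) gives $r^{-\beta}\int_{B(0,r)}|h|^{p}\,dx=c_{\beta}$ for a positive constant $c_{\beta}$ independent of $r$, so that $h\in L^{p,\beta}$ while the limit in (\ref{eqnvo}) as $r\to0$ equals $c_{\beta}\neq0$ and $h\notin L^{p,\beta}_{0}$.

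The only genuinely new step is the middle strict inclusion $L^{p,\beta}_{0}\cap L^{p,\beta}_{\infty}\subsetneqq L^{p,\beta}_{0}$, which requires a function vanishing at the origin but not at infinity. I would truncate the previous profile and set $g(x)=|x|^{-(n-\beta)/p}\chi_{\{|x|\geq1\}}(x)$. Since $g$ is bounded by $1$, the uniform estimate $r^{-\beta}\int_{B(y,r)}|g|^{p}\,dx\leq c_{n}r^{\,n-\beta}\to0$ as $r\to0$ places $g\in L^{p,\beta}_{0}$; conversely, the same polar computation applied to origin-centred balls of radius $r>1$ yields $r^{-\beta}\int_{B(0,r)}|g|^{p}\,dx=c_{\beta}(1-r^{-\beta})\to c_{\beta}>0$, so the limit in (\ref{eqnv8}) as $r\to\infty$ stays bounded below by $c_{\beta}$ and $g\notin L^{p,\beta}_{\infty}$. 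The main obstacle I anticipate is purely technical: verifying that $h$ and $g$ really have finite Morrey norm once the supremum ranges over all centres $y$, not merely over balls centred at the origin. This is the standard membership $|x|^{-(n-\beta)/p}\in L^{p,\beta}$ flagged in the introduction, and I would settle it using the scaling identity (\ref{eqnref}) together with the homogeneity of the profile; with finiteness of the norms in hand, the three witnesses close out the chain of strict inclusions.
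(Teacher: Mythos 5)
Your proposal is correct, and it is actually more complete than what the paper itself provides: the paper never writes out a proof of Corollary \ref{cor4.2}, offering only the one-line remark that the proof ``makes use of the vanishing properties and approximation theory by functions with compact support,'' together with two pointers that coincide exactly with your outer steps --- Theorem \ref{th4.1} (with the explicit witness $\psi=\sum_{k\ge 2}\chi_{B_{t}}$, $B_{t}=B(2^{-t}e_{1},1)$, from the remark following Definition \ref{Def3.5}) for the failure of (\ref{eqnv*}) inside $L^{p,\beta}_{0}\cap L^{p,\beta}_{\infty}$, and Remark \ref{rem2} for the properness of $L^{p,\beta}_{0}$ in $L^{p,\beta}$ when $\beta>0$. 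The genuinely new content of your write-up is the middle strictness, which the paper leaves entirely unaddressed: your truncated profile $g(x)=|x|^{-(n-\beta)/p}\chi_{\{|x|\ge 1\}}$ does the job, since boundedness gives $r^{-\beta}\int_{B(y,r)}|g|^{p}\,dx\le c_{n}r^{n-\beta}\to 0$ as $r\to 0$ (using $\beta<n$), so $g\in L^{p,\beta}_{0}$, while the polar computation on origin-centred balls gives $r^{-\beta}\int_{B(0,r)}|g|^{p}\,dx=c_{\beta}\bigl(1-r^{-\beta}\bigr)\to c_{\beta}>0$, so the limit in (\ref{eqnv8}) is bounded below and $g\notin L^{p,\beta}_{\infty}$; note $\beta>0$ is used here, consistent with the hypothesis $\beta\in(0,n)$. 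Your computations for $h(x)=|x|^{-(n-\beta)/p}$ also check out, including the technical point you flagged about the supremum over all centres: for $|y|>2r$ one has $|x|\ge |y|-r>r$ on $B(y,r)$, so the integral is $\lesssim r^{n}\cdot r^{-(n-\beta)}=r^{\beta}$, while for $|y|\le 2r$ the inclusion $B(y,r)\subset B(0,3r)$ reduces to the origin-centred case; alternatively your scaling-plus-homogeneity argument via (\ref{eqnref}) works equally well. In short, where the paper has an outline you follow it, and where the paper has nothing (the middle inclusion) you supply a correct explicit witness, so your version is a strict improvement on the paper's treatment.
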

\ \\
Corollary \ref{cor4.2} shows that the predual $L_{0,\infty,*}^{p,\beta}$ is strictly smaller than $\mathbb{L}^{p,\beta}$ as presented in Theorem \ref{th4.3}. That is, $\displaystyle L_{0,\infty,*}^{p,\beta}\subset \mathbb{L}^{p,\beta}$. The proof of Corollary \ref{cor4.2} makes use of the vanishing properties and approximation theory by functions with compact support.

\begin{Th}
\label{th4.3}
Given $\beta\in [0,n)$ and $p\in[1,\infty)$ for $n\in\mathbb{N}$. Then the following inclusion holds: $\displaystyle L^{p,\beta}_{0,\infty,*} \subset \mathbb{L}^{p,\beta}$.
\end{Th}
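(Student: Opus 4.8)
The plan is to show that every $f\in L^{p,\beta}_{0,\infty,*}$ has continuous translation in the Morrey norm, i.e. $\Vert\tau_\eta f-f\Vert_{p,\beta}\to 0$ as $\eta\to 0$, which is precisely membership in $\mathbb{L}^{p,\beta}$ by (\ref{eqn2.7}). The engine is the elementary bound, valid because $\Vert\cdot\Vert_{p,\beta}$ is translation invariant (the supremum over centres $y$ absorbs the shift, so $\Vert\tau_\eta(f-g)\Vert_{p,\beta}=\Vert f-g\Vert_{p,\beta}$),
\[\Vert\tau_\eta f-f\Vert_{p,\beta}\leq 2\Vert f-g\Vert_{p,\beta}+\Vert\tau_\eta g-g\Vert_{p,\beta}\]
for any auxiliary $g$. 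Thus it suffices to approximate $f$ in the Morrey norm by a function $g$ whose translation is manifestly continuous and for which $\Vert\tau_\eta g-g\Vert_{p,\beta}\to 0$ is directly verifiable.

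First I would cut off the tail. Setting $f_R=f\chi_{B(0,R)}$, I claim $\Vert f-f_R\Vert_{p,\beta}\to 0$ as $R\to\infty$. Here the two conditions defining $L^{p,\beta}_\infty$ and $L^{p,\beta}_*$ do the work jointly: the vanishing-at-infinity condition (\ref{eqnv8}) kills the contribution of balls of large radius uniformly in the centre, while the truncation condition (\ref{eqnv*})--(\ref{eqn}) controls, uniformly over all centres $y$, the $p$-mass of $f$ sitting outside the large ball $B(0,R)$. Combining them produces, for every $\varepsilon>0$, a radius $R$ with $\Vert f-f_R\Vert_{p,\beta}<\varepsilon$.

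Next I would treat the compactly supported piece $f_R$. Multiplication by $\chi_{B(0,R)}$ only shrinks the small-ball integrals, so $f_R$ still satisfies the vanishing-at-origin condition (\ref{eqnvo}); restricted to the bounded set $B(0,R)$ it therefore lies in $L^{p,\beta}_0$, and by the bounded-domain identity $\mathbb{L}^{p,\beta}=L^{p,\beta}_0$ recorded after (\ref{eqn2.8}) it can be approximated in the Morrey norm by a continuous, compactly supported function $g$ (say $g\in C_c(\mathbb{C}^n)$). For such $g$ the quantity $\Vert\tau_\eta g-g\Vert_{p,\beta}$ is easy to estimate: $\tau_\eta g-g$ is supported in a fixed ball $B(0,M+1)$ for $\vert\eta\vert\leq 1$ and tends to $0$ uniformly by uniform continuity of $g$, so a direct computation of $\sup_{y,r}r^{-\beta}\int_{B(y,r)}\vert\tau_\eta g-g\vert^p$ — whose worst case occurs at radius comparable to the support size, using $\beta<n$ — gives $\Vert\tau_\eta g-g\Vert_{p,\beta}\leq C(M,n,\beta)\Vert\tau_\eta g-g\Vert_\infty\to 0$.

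Assembling the pieces, given $\varepsilon>0$ I choose $R$ with $\Vert f-f_R\Vert_{p,\beta}<\varepsilon$ and then $g$ with $\Vert f_R-g\Vert_{p,\beta}<\varepsilon$, so $\Vert f-g\Vert_{p,\beta}<2\varepsilon$; feeding this into the displayed inequality yields $\limsup_{\eta\to 0}\Vert\tau_\eta f-f\Vert_{p,\beta}\leq 4\varepsilon$, and letting $\varepsilon\to 0$ places $f$ in $\mathbb{L}^{p,\beta}$. The main obstacle is the tail estimate of the second paragraph: the Morrey supremum runs over all centres, including those lying deep in the region where $f-f_R$ is nonzero, so the cutoff error must be controlled uniformly in $y$, and this is exactly where the combination of $L^{p,\beta}_\infty$ with the truncation space $L^{p,\beta}_*$ — rather than either alone — is indispensable; reconciling the precise form of (\ref{eqnv*}) with the required uniform-in-centre bound is the delicate step.
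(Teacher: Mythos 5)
Your argument is correct in substance, but you should know that the paper never actually proves Theorem \ref{th4.3}: the statement is left bare, the inclusion is later reused in the proof of Theorem \ref{th*} where it is dispatched by citation (``Theorem 4.3 of \cite{26*}''; the result is essentially the Almeida--Samko characterization \cite{4}), and the only hint given is the remark that the argument ``makes use of the vanishing properties and approximation theory by functions with compact support.'' Your proposal reconstructs exactly that standard proof --- tail truncation, approximation of the compactly supported piece, mollification to a $C_c$ function, and the translation-invariance triangle inequality --- so it supplies what the paper outsources. Three refinements. First, under the only coherent reading of (\ref{eqnv*})--(\ref{eqn}), namely the uniform condition $\lim_{N\to\infty}\sup_{y,\,r>0} r^{-\beta}\int_{B(y,r)}\vert f\vert^{p}\chi_{N}\,dx=0$ (the literal joint limit in (\ref{eqn}), with no $r^{-\beta}$ weight, would force $f\in L^{p}(\mathbb{C}^{n})$ and cannot be intended), your tail estimate follows from the truncation condition \emph{alone}: $\Vert f-f_R\Vert_{p,\beta}^{p}$ is literally $\sup_{y,r}r^{-\beta}\int_{B(y,r)}\vert f\vert^{p}\chi_{\{\vert x\vert>R\}}\,dx$, so the ``delicate combination'' of $L^{p,\beta}_{\infty}$ with $L^{p,\beta}_{*}$ that worries you in your last paragraph dissolves ($L^{p,\beta}_{\infty}$ is in any case automatic for the compactly supported piece $f_R$, since $f_R\in L^{p}$ and $r^{-\beta}\Vert f_R\Vert_p^p\to 0$). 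Second, when you invoke the bounded-domain identity $\mathbb{L}^{p,\beta}(\Omega)=L^{p,\beta}_{0}(\Omega)$ stated after (\ref{eqn2.8}), add one line transferring it to the whole-space norm: for $f_R$ extended by zero, any ball centred outside $\Omega$ that meets $\mathrm{supp}\, f_R$ sits inside a ball of doubled radius centred in $\Omega$, so the two Morrey norms are comparable up to a factor $2^{\beta/p}$, and the $C_c$ approximant $g$ then comes from the mollifier property of $\mathbb{L}^{p,\beta}$ recorded after (\ref{eqn2.7}) (mollifications of a compactly supported function are themselves continuous with compact support). Third, your bound $\Vert h\Vert_{p,\beta}\leq C(M,n,\beta)\Vert h\Vert_{\infty}$ for $h$ supported in $B(0,M+1)$ is right, with the supremum attained at $r\sim M+1$ and $\beta<n$ used exactly where you say. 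With these touches your write-up is a complete self-contained proof, which is more than the paper itself provides.
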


\begin{Rem}
The continuous embedding $X\hookrightarrow Y$ is also as a result of immediate consequence of triangle inequality. As a consequence of Theorem 3.1 in \cite{22}, $\mathcal{D}\subset \mathcal{U}\subset\mathcal{S}\subset\mathcal{E}$, where $\mathcal{D}$ and $\mathcal{E}$ are spaces of test functions with compact support and exponential type respectively, and $\mathcal{E}'\subset \mathcal{S}'\subset\mathcal{U}'\subset\mathcal{D}'$, we present the result on $\mathcal{L}^{p,\beta}_{\mathcal{U}}$ and $L^{p,\beta}$ by the approximation via tempered ultradistributions
\end{Rem}

\begin{Th}
\label{th3.1}
Given $p\in (1,\infty)$, $-\frac{n}{p}<\beta<0$ and $\beta =-n/u$. Then the spaces $\mathcal{L}^{p,\beta}_{\mathcal{U}}$ and $L^{p,\beta}_{\mathcal{U}}$ are Banach spaces, for $1/p<u/p<1$ and
\begin{equation}
\label{eqn3.5}
\mathcal{U}(\mathbb{C}^{n})\hookrightarrow L^{p} \hookrightarrow \mathcal{L}^{p,\beta}_{\mathcal{U}} \hookrightarrow L^{p,\beta}_{\mathcal{U}}\hookrightarrow L^{u}_{\mathcal{U}}
\end{equation}
Moreover, the spaces $\mathcal{D}, \mathcal{U},\mathcal{S}$ and $L^p$ are dense in $\mathcal{L}^{p,\beta}_{\mathcal{U}}$ and $L^{p,\beta}_{\mathcal{U}}$
\end{Th}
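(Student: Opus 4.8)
The plan is to split the statement into three tasks: completeness of the two tempered Morrey spaces, the four links of the embedding chain treated one at a time, and the density assertion. For completeness I would appeal directly to Lemma \ref{lem3.8}: since $\mathcal{L}^{p,\beta}_{\mathcal{U}}$ is closed in the Banach space $L^{p,\beta}$ and the weighted norm $\|\cdot\|_{\mathcal{L}^{p,\beta}_{\mathcal{U}}}$ dominates the underlying Morrey norm, any $\|\cdot\|_{\mathcal{L}^{p,\beta}_{\mathcal{U}}}$-Cauchy sequence is Cauchy in $L^{p,\beta}$, hence has a limit there, and that limit lies in the closed subspace. The same argument applied to the global version (whose completeness was already recorded after Definition \ref{def1*}) shows both spaces are Banach.

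For the embedding chain I would handle the four arrows separately. The arrow $\mathcal{U}(\mathbb{C}^n)\hookrightarrow L^p$ follows from the rapid-decrease estimate (\ref{eqn1}): for $|\alpha|$ large enough relative to $n/p$ the weight $(1+|\xi|^p)^{-\alpha}$ is $p$-integrable, so the seminorm $\|f\|_p$ controls the $L^p$-norm and the inclusion is continuous. The arrow $L^p\hookrightarrow\mathcal{L}^{p,\beta}_{\mathcal{U}}$ is obtained by bounding the weighted local averages appearing in the norm of Definition \ref{def} through Hölder's inequality together with the negativity of $\beta$ (so that $r^{-\beta}=r^{|\beta|}$ stays controlled on the relevant range of radii), in the spirit of the chain (\ref{eqnS}). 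The arrow $\mathcal{L}^{p,\beta}_{\mathcal{U}}\hookrightarrow L^{p,\beta}_{\mathcal{U}}$ is a comparison of the two defining suprema, the inhomogeneous (local) norm controlling the homogeneous (global) one up to a constant on the admissible radius range.

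The deepest step, which I expect to be the main obstacle, is the final arrow $L^{p,\beta}_{\mathcal{U}}\hookrightarrow L^u_{\mathcal{U}}$, governed by the identity $\beta=-n/u$. Here I would invoke the scaling law of Theorem \ref{thmscaling}, equation (\ref{eqnscaling}): writing an arbitrary ball through a dilation $\mu\mapsto\mu(l\cdot)$ and then optimising in the dilation parameter $l>0$ turns the weighted Morrey bound into a genuine $L^u$ bound, precisely because the exponent $-n/p$ produced by scaling matches the homogeneity forced by $\beta=-n/u$. The constraint $1/p<u/p<1$ is what makes this optimisation admissible, and checking that the extremal radius stays in the allowed range is the delicate point; the weight $(1+|\xi|^p)^{\alpha}$ is carried through unchanged since it is bounded on the support in question.

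Finally, for density I would use the inclusions $\mathcal{D}\subset\mathcal{U}\subset\mathcal{S}$ together with $\mathcal{S}\subset L^p$ and the already-proved embedding $L^p\hookrightarrow\mathcal{L}^{p,\beta}_{\mathcal{U}}$, so it suffices to approximate an arbitrary $\mu$ in the tempered Morrey norm by an $L^p$-function and then that function by a test function. The first approximation is delivered by mollification: the estimate $\|\mu-\gamma*\mu\|_{L^{p,\beta}}\le M\|\mu\|_{\mathcal{L}^{p,\beta}}$ from the convolution lemma, combined with the vanishing conditions (\ref{eqn(*)})–(\ref{eqn3.1}) and the translation-continuity of the subspace $\mathbb{L}^{p,\beta}$ in (\ref{eqn2.7}), shows $\gamma_l*\mu\to\mu$ in norm as $l\to0$; since each $\gamma_l*\mu$ is smooth with controlled decay, a further truncation places it in $\mathcal{D}$. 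Chaining these approximations yields density of all four classes, and passing through the continuous inclusions of the chain propagates the density to the global space $L^{p,\beta}_{\mathcal{U}}$.
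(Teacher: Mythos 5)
Your plan diverges from the paper at the decisive step, and that step contains a genuine gap. For the last arrow $L^{p,\beta}_{\mathcal{U}}\hookrightarrow L^{u}_{\mathcal{U}}$ you propose to dilate and optimise in the parameter $l$ using Theorem \ref{thmscaling}. That cannot work: inequality (\ref{eqnscaling}) bounds $\Vert\mu(l\cdot)\Vert_{L^{p,\beta}_{\mathcal{U}}}$ by $l^{-n/p}\Vert\mu\Vert_{L^{p,\beta}_{\mathcal{U}}}$, i.e.\ it compares a dilate with the original \emph{inside the same space}; optimising over $l$ in such an endomorphic estimate can never produce membership in the different space $L^{u}_{\mathcal{U}}$, since no covering or summation over balls is encoded in it. The homogeneity bookkeeping also fails: by (\ref{eqnref}) the Morrey norm scales as $l^{(\beta-n)/p}$, while $\Vert\mu(l\cdot)\Vert_{L^{u}}=l^{-n/u}\Vert\mu\Vert_{L^{u}}$, and with $\beta=-n/u$ the exponents $(\beta-n)/p$ and $-n/u$ coincide only when $p=u+1$; on the rest of the admissible range $1<u<p$ your optimisation degenerates as $l\to0$ or $l\to\infty$, so the ``extremal radius'' you propose to check does not exist. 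The paper's own proof of this link is different and more elementary: it fixes the conjugate splitting (\ref{eqn3.9}), namely $1/u=1/p+1/p'$ with $1/p'=-1/p-\beta/n$ (equivalently $1/u=-\beta/n$, which is exactly $\beta=-n/u$), and applies H\"older's inequality ball by ball — the same device as in Theorem \ref{thmp} — so that the factor $r^{-\beta}$ and the weight $(1+\vert\xi\vert^{p})^{\alpha}$ are absorbed into $u$-integrability. You should replace the scaling step by this H\"older argument.

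The remaining pieces are sound but follow a different route from the paper. Completeness: the paper merely asserts that $L^{p,\beta}_{\mathcal{U}}$ and $\mathcal{L}^{p,\beta}_{\mathcal{U}}$ are Banach; your reduction to closedness in $L^{p,\beta}$ via Lemma \ref{lem3.8} is a reasonable way to substantiate that assertion. Density: the paper does not mollify here; it expands $\hat{\mu}$ into the finite weighted sum $\hat{\mu}^{\alpha}=\sum_{\vert h\vert\leq\alpha,\vert k\vert\leq\alpha}(1+\vert\xi\vert^{p})^{\alpha}\hat{\mu}_{h,k}$, shows $\Vert\hat{\mu}-\hat{\mu}^{\alpha}\Vert_{\mathcal{L}^{p,\beta}_{\mathcal{U}}}\to0$ as $\vert\alpha\vert\to\infty$, and represents each piece by a function in $\mathcal{D}$ — a decomposition argument, not an approximate identity. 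Your mollification-plus-truncation scheme is essentially the machinery the paper deploys later in Theorems \ref{th5.2} and \ref{th5.3} and Corollary \ref{cor5.4}; it is viable, but only because the vanishing conditions (\ref{eqn(*)}) and (\ref{eqn3.1}) are in force — in a general Morrey space truncation density fails ($C^{\infty}_{0}$ is not dense in $L^{p,\beta}$ for $\beta>0$, which is the raison d'\^etre of the subspaces $L^{p,\beta}_{0}$ and $\mathbb{L}^{p,\beta}$) — so if you keep that route you must invoke the vanishing hypotheses explicitly at the truncation step rather than treating truncation as automatic.
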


\begin{proof}
For $1<u<p$ then (\ref{eqn3.5}) is obvious. Note that the continuous embedding $L^{p,\beta}(\mathbb{R}^{n})\hookrightarrow \mathcal{L}^{p,\beta}(\mathbb{R}^{n})$ is not complete. The spaces $L^{p,\beta}_{\mathcal{U}}(\mathbb{C}^{n})$ and  $\mathcal{L}^{p,\beta}_{\mathcal{U}}(\mathbb{C}^{n})$ are Banach.\\
The embedding of $\mathcal{L}^{p,\beta}_{\mathcal{U}}(\mathbb{C}^{n})$ into $L^{p,\beta}_{\mathcal{U}}(\mathbb{C}^{n})$ is obvious be definition since the local Morrey space is embedded into global Morrey space. We now prove the last embedding. To do this, let $\mu\in L^{p,\beta}_{\mathcal{U}}$. Let
\begin{equation}
\label{eqn3.9}
\frac{1}{u}=\frac{1}{p}+\frac{1}{p'},\quad \text{where}\quad \frac{1}{p'}=-\frac{1}{p}-\frac{\beta}{n}
\end{equation}
Then we have
\begin{align*}
\Vert\mu\Vert_{L^{u}_{\mathcal{U}}} & \leq M\sum_{\xi\in \mathbb{C}^n}(1+\vert\xi\vert^{p})^{\alpha}\Vert\mu\Vert_{L^{p,\beta}_{\mathbb{U}}} \\
      &\leq M\sup_{\xi\in \mathbb{C}^n}\Vert\mu\Vert_{L^{p,\beta}_{\mathcal{U}}}
\end{align*}
This shows that $\displaystyle L^{p,\beta}_{\mathcal{U}}\hookrightarrow L^{u}_{\mathcal{U}}$.\\
Finally we prove that $\mathcal{D}$ is dense in both $\mathcal{L}_{\mathcal{U}}^{p,\beta}$ and $L^{p,\beta}_{\mathcal{U}}$. Let $\hat{\mu}\in \mathcal{U}'(\mathbb{C}^{n})$ we get
\[\hat{\mu}^{\alpha}=\sum_{\vert h\vert\leq \alpha, \vert k\vert\leq \alpha}(1+\vert\xi\vert^{p})^{\alpha}\hat{\mu}_{h,k},\quad \alpha=(\alpha_{1},\cdots,\alpha_{n})\in \mathbb{N}\]
Then
\[\Vert \hat{\mu}-\hat{\mu}^{\alpha}\Vert_{\mathcal{L}^{p,\beta}_{\mathcal{U}}}\longrightarrow 0\quad\text{as}\quad \vert\alpha\vert\to \infty\]
Any $\vert h\vert\leq \alpha, \vert k\vert\leq \alpha$ can be represented in $L^{p,\beta}$ by function belonging to $\mathcal{D}$. Then $\mathcal{D}$ is dense in $L^{p,\beta}_{\mathcal{U}}$. By similar argument, one has that $\mathcal{D}$ is dense in $\mathcal{L}_{\mathcal{U}}^{p,\beta}$.
\end{proof}

\begin{Th}
\label{th*}
For $n\in\mathbb{N}$, let $0\leq \beta<n$ and $1\leq p<\infty$. Then we have the following inclusion: $\displaystyle \mathcal{L}_{\mathcal{U}}^{p,\beta}\subset L^{p,\beta}_{0,\infty,*}\subset \mathbb{L}^{p,\beta}$.
\end{Th}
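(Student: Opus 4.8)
The plan is to split the asserted chain into its two links. The right-hand inclusion $L^{p,\beta}_{0,\infty,*}\subset\mathbb{L}^{p,\beta}$ is precisely Theorem \ref{th4.3} and may be quoted verbatim, so the whole task reduces to the left-hand inclusion $\mathcal{L}_{\mathcal{U}}^{p,\beta}\subset L^{p,\beta}_{0,\infty,*}$. For this I would argue by approximation. Recall from Theorem \ref{th3.1} that $\mathcal{U}$ is dense in $\mathcal{L}^{p,\beta}_{\mathcal{U}}$, and from Definition \ref{Def3.5} together with Remark \ref{Rem3.7} that $L^{p,\beta}_{0,\infty,*}=L^{p,\beta}_{0}\cap L^{p,\beta}_{\infty}\cap L^{p,\beta}_{*}$ is an intersection of subspaces each closed in $L^{p,\beta}$, hence itself closed in $L^{p,\beta}$. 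Thus it suffices to prove the containment $\mathcal{U}\subset L^{p,\beta}_{0,\infty,*}$ on the dense subclass of rapidly decreasing ultradifferentiable functions and then pass to the limit.

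First I would verify the three vanishing conditions for a fixed $\mu\in\mathcal{U}$. Since such a $\mu$ is a rapidly decreasing entire function it is bounded, say $\sup_{x}|\mu(x)|^{p}=:C<\infty$, and satisfies $|\mu|^{p}\in L^{1}(\mathbb{C}^{n})$ with $\int_{\mathbb{C}^{n}}|\mu|^{p}=:M_{0}<\infty$. For the origin condition (\ref{eqnvo}) one estimates $r^{-\beta}\int_{B(y,r)}|\mu|^{p}\lesssim C\,r^{n-\beta}\to 0$ as $r\to 0$, uniformly in $y$ because $\beta<n$. For the infinity condition (\ref{eqnv8}) one uses $r^{-\beta}\int_{B(y,r)}|\mu|^{p}\leq M_{0}\,r^{-\beta}\to 0$ as $r\to\infty$, uniformly in $y$; the degenerate value $\beta=0$ is treated separately, since there $L^{p,0}=L^{p}$ and the whole scale of subspaces collapses accordingly. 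For the truncation condition (\ref{eqn}), writing $\chi_{N}$ for the cutoff onto $\{|x|>N\}$, the rapid decrease gives $\int_{B(y,r)}|\mu|^{p}\chi_{N}\leq\int_{\{|x|>N\}}|\mu|^{p}\to 0$ as $N\to\infty$, uniformly in $y$ and $r$, in line with the dominated-convergence remark following Definition \ref{Def3.2}. Hence $\mu\in L^{p,\beta}_{0,\infty,*}$.

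To close the argument I would take an arbitrary $\mu\in\mathcal{L}^{p,\beta}_{\mathcal{U}}$ and, by the density just invoked, choose $\mu_{k}\in\mathcal{U}$ with $\mu_{k}\to\mu$ in $\Vert\cdot\Vert_{\mathcal{L}^{p,\beta}_{\mathcal{U}}}$. The elementary weight bound $(1+|\xi|^{p})^{\alpha}\geq 1$ forces $\Vert\cdot\Vert_{L^{p,\beta}}\leq\Vert\cdot\Vert_{\mathcal{L}^{p,\beta}_{\mathcal{U}}}$, so $\mu_{k}\to\mu$ also in $L^{p,\beta}$; since each $\mu_{k}\in L^{p,\beta}_{0,\infty,*}$ and this set is closed in $L^{p,\beta}$, the limit satisfies $\mu\in L^{p,\beta}_{0,\infty,*}$. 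Composing with Theorem \ref{th4.3} then yields the full inclusion. The main obstacle is the uniformity in the centre $y$ of the three limits for $\mu\in\mathcal{U}$: the rapid decrease of $\mu$ must genuinely dominate the growth of $B(y,r)$ across all translates, and the interplay of this uniformity with the Morrey normalisation $r^{-\beta}$ is the delicate point, together with the careful treatment of the boundary value $\beta=0$, where the infinity condition would otherwise fail.
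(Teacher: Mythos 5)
Your argument is essentially correct on the range $0<\beta<n$, but it is a genuinely different proof from the paper's, and in one respect a more complete one. For the right-hand link both you and the paper simply quote Theorem \ref{th4.3} (the paper defers it to Theorem 4.3 of \cite{26*}). For the rest, however, the paper never proves the intermediate inclusion $\mathcal{L}_{\mathcal{U}}^{p,\beta}\subset L^{p,\beta}_{0,\infty,*}$ at all: it argues directly that $\mathcal{L}_{\mathcal{U}}^{p,\beta}\subset\mathbb{L}^{p,\beta}$, by approximating $\mu$ in Morrey norm by a compactly supported $\gamma$, using the triangle bound $\Vert\tau_{\varepsilon}\mu-\mu\Vert_{p,\beta}\leq 2Q\Vert\mu-\gamma\Vert_{p,\beta}+\Vert\tau_{\varepsilon}\gamma-\gamma\Vert_{p,\beta}$, and then splitting the supremum over radii into three ranges ($r<r_{0}$, $r>l$, $r\in[r_{0},l]$), where the outer ranges are controlled by the vanishing and growth conditions (\ref{eqn(*)}), (\ref{eqn30}) and the compact middle range by $L^{p}$-continuity of translation. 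Your route --- verify the three conditions (\ref{eqnvo}), (\ref{eqnv8}), (\ref{eqn}) by direct estimates on the class $\mathcal{U}$, then pass to the limit using the closedness of $L^{p,\beta}_{0,\infty,*}$ in $L^{p,\beta}$ (Remark \ref{Rem3.7} plus Definition \ref{Def3.5}) and the weight bound $(1+\vert\xi\vert^{p})^{\alpha}\geq 1$ --- is softer, and it actually delivers the first inclusion of the stated chain, which the paper's own proof does not. What the paper's approach buys in exchange is that it needs no density theorem for $\mathcal{U}$, only approximation by compactly supported Morrey functions; density of smooth classes in Morrey scales is a delicate point, so that is not a negligible advantage.

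Two concrete caveats. First, the density you invoke is not available where you cite it: Theorem \ref{th3.1} asserts density of $\mathcal{U}$ in $\mathcal{L}^{p,\beta}_{\mathcal{U}}$ only under the hypotheses $p\in(1,\infty)$ and $-\frac{n}{p}<\beta<0$, which do not cover the range $0\leq\beta<n$ of the present theorem. The statement you actually need is Corollary \ref{cor5.4}, which in the paper appears \emph{after} Theorem \ref{th*} and is derived from Theorem \ref{th5.3}; to avoid circularity you would have to check (as one can, since the proof of Theorem \ref{th5.3} is a direct cutoff argument not using Theorem \ref{th*}) that the results can be reordered, or reprove density on $[0,n)$ yourself. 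Second, your treatment of the endpoint $\beta=0$ does not work as stated: the collapse $L^{p,0}=L^{p}$ does not rescue the vanishing-at-infinity condition (\ref{eqnv8}), because for $\beta=0$ and $f\neq 0$ one has $\sup_{y}\int_{B(y,r)}\vert f(x)\vert^{p}dx\to\Vert f\Vert_{p}^{p}>0$ as $r\to\infty$, so the class $L^{p,0}_{\infty}$ degenerates to $\lbrace 0\rbrace$ and $\mathcal{U}\not\subset L^{p,0}_{0,\infty,*}$. This defect is inherited from the theorem statement itself (note that the neighbouring results Theorem \ref{th4.1} and Corollary \ref{cor4.2} are confined to $\beta\in(0,n)$), and the honest repair is to restrict the claim to $0<\beta<n$ rather than to assert a collapse. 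On that restricted range your three estimates --- $Cr^{n-\beta}$ at the origin, $M_{0}r^{-\beta}$ at infinity, and the tail bound $\int_{B(y,r)}\vert\mu\vert^{p}\chi_{N}\leq\int_{\lbrace\vert x\vert>N\rbrace}\vert\mu\vert^{p}$ for the truncation --- are correct and uniform in $y$ and $r$, precisely because rapid decrease gives both a uniform bound and a global integrable majorant for $\vert\mu\vert^{p}$.
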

\begin{proof}
The proof of $L^{p,\beta}_{0,\infty,*}\subset \mathbb{L}^{p,\beta}$ can be found in (Theorem 4.3 \cite{26*}) . We show that $\displaystyle \mathcal{L}_{\mathcal{U}}^{p,\beta}\subset \mathbb{L}^{p,\beta}$. Let $\mu\in \mathcal{L}_{\mathcal{U}}^{p,\beta}$. Since $\displaystyle \mathcal{L}_{\mathcal{U}}^{p,\beta}\subset L^{p,\beta}$ then $\forall$ $\varepsilon>0$ there is a $\gamma\in L_{\mathcal{U}}^{p,\beta}$ with compact support such that
\[\Vert\mu-\gamma\Vert_{p,\beta}<\frac{\varepsilon}{4Q}\quad\text{and}\quad \Vert\tau\psi-\psi\Vert_{p,\beta}<\frac{\varepsilon}{2Q}\quad\text{for}\quad Q>0\]
For any $\xi\in\mathbb{C}^n$ we have
\[\Vert\tau_{\varepsilon}\mu-\mu\Vert_{p,\beta}\leq 2Q \Vert\mu-\gamma\Vert_{p,\beta}+\Vert\tau_{\varepsilon}\gamma-\gamma\Vert_{p,\beta} \leq 2\Big(\frac{\varepsilon}{4Q}\Big)+\frac{\varepsilon}{2Q}< \frac{\varepsilon}{Q}<\varepsilon\]
Thus $\displaystyle \Vert\tau_{\varepsilon}\mu-\mu\Vert_{p,\beta}< \varepsilon$. In view of conditions of (\ref{eqn(*)}) and (\ref{eqn30}), there are $r,l>0$ we have
\begin{align*}
C_{1}\equiv\sup_{\xi\in\mathbb{C}^{n}}\sup_{y\in\Omega; r\in (0,l)}(1+\vert \xi\vert^{p})^{\alpha}r^{-\beta}\int_{B(y,r)}\vert\tau\psi-\psi(x)\vert^{p}dx & \\
 \leq M 2 ^{p}\sup_{y\in\Omega; r\in (0,l}(1+\vert \xi\vert^{p})^{\alpha}r^{-\beta}\int_{B(y,r)}\vert\psi(x)\vert^{p}dx<M\Big(\frac{\varepsilon}{2}\Big)^{p}
\end{align*}
and
\begin{align*}
C_{2}\equiv\sup_{\xi\in\mathbb{C}^{n}}\sup_{y\in\Omega; r>l}(1+\vert \xi\vert^{p})^{\alpha}r^{-\beta}\int_{B(y,r)}\vert\tau\psi-\psi(x)\vert^{p}dx & \\
leq M 2 ^{p}\sup_{y\in\Omega; r>l}(1+\vert \xi\vert^{p})^{\alpha}r^{-\beta}\int_{B(y,r)}\vert\psi(x)\vert^{p}dx<M_{1}\Big(\frac{\varepsilon}{2}\Big)^{p}
\end{align*}

 By fixing $l>0$ and $r_0$ we have
 \[\Vert\tau_{\varepsilon}\gamma-\gamma\Vert_{p,\beta}\leq Q\max\lbrace C_{1},C_{2},C_{3}\rbrace\]
 with
 \[C_{3}:=\sup_{y\in\Omega; r\in[r_{0},t]}(1+\vert \xi\vert^{p})^{\alpha}r^{-\beta}\int_{B(y,r}\vert\tau\psi-\psi(x)\vert^{p}dx\]
 Thus
 \[C_{3}\lesssim\sup_{\xi\in\mathbb{C}^{n}}\sup_{y\in\Omega; r}\int_{B(y,t)}(1+\vert \xi\vert^{p})^{\alpha}\vert\psi(x-\xi)-\psi(x)\vert^{p}d\xi\leq N\max\Big\{ \sup_{\vert y\vert\leq M}(\cdots),\sup_{\vert y\vert>M}(\cdots)\Big\}\]
 with $N=M_{1}M>0$ and $N,\vert\alpha\vert>0$. Since $\psi$ has a compact support in $\bar{B}(y,r)$, we have $T>0$ such that $\psi(\cdot)=0$ if $\text{supp}(\psi)\subseteq \Omega$ and $\psi(\cdot)\neq 0$ if $\text{supp}(\psi)\subset \Omega$. In the case $\vert\xi\vert>M$ we have
 \[\int_{\vert y-x\vert<l}(1+\vert \xi\vert^{p})^{\alpha}\vert\gamma(y-\xi)-\gamma(x)\vert^{p}dx=\int_{\vert z\vert<l}\vert\gamma(z+y-\xi)-\gamma(z+y)\vert^{p}dz\]
Thus we obtain
 \[C_{\varepsilon}<\Big(\frac{\varepsilon}{2Q}\Big)<\varepsilon\]
with $L^p$-norm continuity with respect to translation. Hence the proof.
\end{proof}

\begin{Rem}
\label{rem4.7}
With the consideration of proof of Theorem \ref{th*}, we obtain that for every $\varepsilon>0$ and  $\mu\in L^{p,\beta}_{0}\cap \mathcal{L}^{p,\beta}_{\mathcal{U}}\cap L^{p,\beta}_{\infty}$, there exist  $r_{0},l>0$ such that
\[\Vert\tau_{\varepsilon}\mu-\mu\Vert_{p,\beta}^{p}\leq N\sup\lbrace \varepsilon,C_{r_{0},t(\xi)}\rbrace\]
$\forall$ $\xi\in\mathbb{C}^n$, $C_{r_{0},l}(\xi)$ is expressed
\[C_{r_{0},l}(\xi)=\sup_{\xi\in\mathbb{C}^{n}}\Big[\sup_{y\in\mathbb{R}^{n},r>0}\int_{B(y,l)}(1+\vert\xi\vert^{p})^{\alpha}\vert\mu(y-\xi)-\mu(\xi)\vert^{p}d\xi\Big]\]
Note that the rapidly decreasing condition ensures the uniform continuity of the translation operator in $\mathcal{L}^{p,\beta}_{\mathcal{U}}$, and controls the vanishing condition for Morrey tempered ultradistributions in $\displaystyle L^{p,\beta}_{0}\cap\mathcal{L}^{p,\beta}_{\mathcal{U}}\cap L^{p,\beta}_{\infty}$. If we take rapidly decreasing ultradifferentiable functions, say $\psi\in\mathcal{U}$, then the mollifier $\mu*\psi_{l}\in \mathbb{L}^{p,\beta}\cap C^{\infty}$ $\forall$ $l>0$.
\end{Rem}
In view of Remark \ref{rem4.7} we present the following theorem.

\begin{Th}
\label{th5.1}
Assume $0\leq \beta<n$ for $n\in\mathbb{N}$ and $1\leq p<\infty$. Then for every Morrey function with vanishing condition in Morrey norm there exists a $C^\infty$ function. Furthermore, we have
\[\mu\in\overline{\mathbb{L}^{p,\beta}\cap C^{\infty}}\implies \mu\in \mathbb{L}^{p,\beta}.\]
\end{Th}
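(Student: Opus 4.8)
The plan is to reduce the stated implication to the fact, recorded just after (\ref{eqn2.7}), that $\mathbb{L}^{p,\beta}$ is a closed subspace of $L^{p,\beta}$, together with the translation invariance of the Morrey norm. The two assertions in the theorem are in fact the two inclusions making up the identity $\overline{\mathbb{L}^{p,\beta}\cap C^{\infty}}=\mathbb{L}^{p,\beta}$: the first clause produces, for each Morrey function with continuous translation, a smooth approximant (so that $\mathbb{L}^{p,\beta}\subseteq\overline{\mathbb{L}^{p,\beta}\cap C^{\infty}}$), while the displayed implication is the reverse containment $\overline{\mathbb{L}^{p,\beta}\cap C^{\infty}}\subseteq\mathbb{L}^{p,\beta}$.

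For the displayed implication I would argue directly. Suppose $\mu\in\overline{\mathbb{L}^{p,\beta}\cap C^{\infty}}$ and pick a sequence $(\mu_{k})\subset\mathbb{L}^{p,\beta}\cap C^{\infty}$ with $\Vert\mu_{k}-\mu\Vert_{p,\beta}\to 0$. The key observation is that each ball $\bar{B}(y,r)$ in the supremum defining the norm is carried to $\bar{B}(y-\eta,r)$ by the translation $\tau_{\eta}$, so the Morrey norm is translation invariant and $\Vert\tau_{\eta}\mu-\tau_{\eta}\mu_{k}\Vert_{p,\beta}=\Vert\mu-\mu_{k}\Vert_{p,\beta}$. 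The triangle inequality then gives
\[
\Vert\tau_{\eta}\mu-\mu\Vert_{p,\beta}\leq 2\Vert\mu-\mu_{k}\Vert_{p,\beta}+\Vert\tau_{\eta}\mu_{k}-\mu_{k}\Vert_{p,\beta}.
\]
Given $\varepsilon>0$ I would first fix $k$ so the first term is below $2\varepsilon/3$, and then, since $\mu_{k}\in\mathbb{L}^{p,\beta}$, choose $\delta>0$ so that $\Vert\tau_{\eta}\mu_{k}-\mu_{k}\Vert_{p,\beta}<\varepsilon/3$ whenever $\vert\eta\vert<\delta$; this yields $\Vert\tau_{\eta}\mu-\mu\Vert_{p,\beta}<\varepsilon$, that is $\mu\in\mathbb{L}^{p,\beta}$.

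For the first clause I would invoke mollification. Given $\mu\in\mathbb{L}^{p,\beta}$ and $\psi\in\mathcal{U}$ a standard mollifier, Remark \ref{rem4.7} already records that $\mu*\psi_{l}\in\mathbb{L}^{p,\beta}\cap C^{\infty}$ for every $l>0$, so it suffices to show $\Vert\mu*\psi_{l}-\mu\Vert_{p,\beta}\to 0$ as $l\to 0$. I would estimate, by Minkowski's integral inequality applied to the Morrey norm,
\[
\Vert\mu*\psi_{l}-\mu\Vert_{p,\beta}\leq\int_{\mathbb{C}^{n}}\vert\psi_{l}(\eta)\vert\,\Vert\tau_{\eta}\mu-\mu\Vert_{p,\beta}\,d\eta,
\]
and then use that $\psi_{l}$ concentrates its mass near the origin together with the continuity $\Vert\tau_{\eta}\mu-\mu\Vert_{p,\beta}\to 0$ to drive the right-hand side to zero. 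This furnishes the required $C^{\infty}$ approximant and gives $\mathbb{L}^{p,\beta}\subseteq\overline{\mathbb{L}^{p,\beta}\cap C^{\infty}}$.

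The main obstacle is the mollifier step of the first clause: one must verify that Minkowski's integral inequality is valid for the Morrey supremum norm (interchanging the convolution integral with the supremum over balls) and that the translation continuity may be integrated against the approximate identity, which is exactly where the defining property of $\mathbb{L}^{p,\beta}$ is used. By contrast, the displayed implication is routine once the translation invariance of $\Vert\cdot\Vert_{p,\beta}$ is in hand, and conceptually it is nothing more than the statement that the closure of a subset of the closed space $\mathbb{L}^{p,\beta}$ remains inside $\mathbb{L}^{p,\beta}$.
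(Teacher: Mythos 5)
Your proposal is correct and follows the route the paper intends: the paper states this theorem without a separate proof, relying on Remark \ref{rem4.7} (which records that $\mu*\psi_{l}\in\mathbb{L}^{p,\beta}\cap C^{\infty}$ for all $l>0$) for the approximation clause, and on the closedness of $\mathbb{L}^{p,\beta}$ in $L^{p,\beta}$ noted after (\ref{eqn2.7}) with reference to \cite{29} for the displayed implication. Your three-epsilon argument via translation invariance of the Morrey norm, and the Minkowski-integral-inequality estimate $\Vert\mu*\psi_{l}-\mu\Vert_{p,\beta}\leq\int_{\mathbb{C}^{n}}\vert\psi_{l}(\eta)\vert\,\Vert\tau_{\eta}\mu-\mu\Vert_{p,\beta}\,d\eta$ with the mass of $\psi_{l}$ split near and away from the origin, supply exactly the details the paper leaves implicit.
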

In view of all the results presented we have the following important theorem which characterizes the functions in the classical Morrey space with both vanishing condition and slow growth condition.

\begin{Th}
\label{th5.2}
For $\beta\in [0,n)$ and $p\in [1,\infty)$,  $n\in \mathbb{N}$. If $\mu\in L^{p,\beta}_{0}\cap \mathcal{L}_{\mathcal{U}}^{p,\beta}\cap L^{p,\beta}_{\infty}$ is uniformly continuous. Then $\mu$ can be represented in Morrey classical norm by function from $\displaystyle L^{p,\beta}_{0}\cap \mathcal{L}_{\mathcal{U}}^{p,\beta}\cap L^{p,\beta}_{\infty}\cap C^\infty$.
\end{Th}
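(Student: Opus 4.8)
The plan is to approximate $\mu$ by mollification against a rapidly decreasing ultradifferentiable kernel, and then to verify that the mollified functions simultaneously stay in all three subspaces, acquire $C^\infty$-smoothness, and converge to $\mu$ in the Morrey norm. First I would fix $\psi\in\mathcal{U}(\mathbb{C}^n)$ with $\int_{\mathbb{C}^n}\psi(\xi)\,d\xi=1$ and set $\psi_l(\xi)=l^{-n}\psi(\xi/l)$ for $l>0$, so that each $\psi_l$ lies in $\mathcal{U}\subset L^1$ and forms an approximate identity. Define the mollification $\mu_l:=\mu*\psi_l$. Since $\mu\in L^{p,\beta}\subset L^1_{\text{loc}}$ and $\psi_l$ is a rapidly decreasing ultradifferentiable function, $\mu_l$ is a genuine function (not merely an ultradistribution) and, by Remark \ref{rem4.7}, $\mu_l\in\mathbb{L}^{p,\beta}\cap C^\infty$ for every $l>0$; this already secures the required smoothness.

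Next I would show that $\mu_l$ remains in each of the three spaces. By Theorem \ref{th3.8} the preduals $L^{p,\beta}_{0}$ and $L^{p,\beta}_{\infty}$ and the dual space $\mathcal{L}^{p,\beta}_{\mathcal{U}}$ are all invariant under convolution with an integrable kernel. As $\psi_l\in L^1$, each convolution preserves membership, so $\mu_l\in L^{p,\beta}_{0}\cap\mathcal{L}^{p,\beta}_{\mathcal{U}}\cap L^{p,\beta}_{\infty}$, and combining with the previous paragraph gives $\mu_l\in L^{p,\beta}_{0}\cap\mathcal{L}^{p,\beta}_{\mathcal{U}}\cap L^{p,\beta}_{\infty}\cap C^\infty$, exactly the target intersection.

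The crux is the norm convergence $\|\mu_l-\mu\|_{p,\beta}\to 0$ as $l\to 0$. Using the normalization of $\psi$ and a change of variables, the difference can be written as
\[
\mu_l(x)-\mu(x)=\int_{\mathbb{C}^n}\psi(z)\big(\tau_{lz}\mu(x)-\mu(x)\big)\,dz,
\]
and the Minkowski-type integral inequality for the Morrey norm yields
\[
\|\mu_l-\mu\|_{p,\beta}\leq\int_{\mathbb{C}^n}|\psi(z)|\,\|\tau_{lz}\mu-\mu\|_{p,\beta}\,dz.
\]
The uniform continuity of $\mu$, together with its membership in the vanishing subspaces, furnishes through Remark \ref{rem4.7} the continuity of translation $\|\tau_\eta\mu-\mu\|_{p,\beta}\to 0$ as $\eta\to 0$; hence the integrand tends to $0$ pointwise in $z$. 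It is dominated by $2\|\mu\|_{p,\beta}\,|\psi(z)|$, which is integrable since $\psi\in\mathcal{U}\subset L^1$, so the dominated convergence theorem forces the integral to $0$. Given $\varepsilon>0$, choosing $l$ small enough that $\|\mu_l-\mu\|_{p,\beta}<\varepsilon$ produces a function $\mu_l\in L^{p,\beta}_{0}\cap\mathcal{L}^{p,\beta}_{\mathcal{U}}\cap L^{p,\beta}_{\infty}\cap C^\infty$ representing $\mu$ in the Morrey norm to within $\varepsilon$, which is the asserted representation.

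I expect the third step to be the main obstacle: in a general nonseparable Morrey space translation is not continuous and approximate identities need not converge in norm, so the usual mollification argument can break down. The uniform continuity hypothesis, reinforced by the vanishing conditions of $L^{p,\beta}_{0}\cap L^{p,\beta}_{\infty}$ encoded in Remark \ref{rem4.7}, is precisely what restores continuity of translation and makes the dominated convergence legitimate. Secondary care is needed to justify the Minkowski integral inequality in the Morrey norm and to ensure $\mu*\psi_l$ is well defined as a function; both are guaranteed by $\mu\in L^{p,\beta}\subset L^1_{\text{loc}}$ and $\psi_l\in\mathcal{U}$, after which the conclusion follows.
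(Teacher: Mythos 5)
Your proposal is correct and shares the paper's skeleton---mollify $\mu$ against $\psi_l$ with $\psi\in\mathcal{U}$, check membership of $\mu*\psi_l$ in the triple intersection, and prove Morrey-norm convergence starting from the same Minkowski-type integral inequality---but the decisive convergence step is handled by a genuinely different mechanism. The paper argues directly: it uses the vanishing conditions of $L^{p,\beta}_{0}\cap L^{p,\beta}_{\infty}$ to dispose of small and large radii $r$, then invokes the uniform continuity of $\mu$ to choose $\delta$ and splits the $z$-integral into $\vert z\vert<\delta$ (controlled by uniform continuity, as in (\ref{eqn5.4})) and $\vert z\vert\geq\delta$ (controlled by the rapid decay of $\psi$ together with $\mu\in\mathcal{L}^{p,\beta}_{\mathcal{U}}$, as in (\ref{eqn5.5}), which carries the factor $l$), in effect re-proving translation continuity inline. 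You instead quote translation continuity wholesale---$\Vert\tau_{\eta}\mu-\mu\Vert_{p,\beta}\to 0$ as $\eta\to 0$, available from Theorem \ref{th*} and Remark \ref{rem4.7}---and finish by dominated convergence with majorant $2\Vert\mu\Vert_{p,\beta}\vert\psi(z)\vert$, so the $\delta$-splitting is absorbed into the earlier theorem. Your route is cleaner, it makes visible that the uniform continuity hypothesis enters only through translation continuity (Theorem \ref{th*} already asserts $\mathcal{L}^{p,\beta}_{\mathcal{U}}\subset\mathbb{L}^{p,\beta}$, so on your reading the hypothesis is nearly redundant), and it justifies membership of $\mu*\psi_l$ in $L^{p,\beta}_{0}\cap\mathcal{L}^{p,\beta}_{\mathcal{U}}\cap L^{p,\beta}_{\infty}$ honestly via the convolution-invariance Theorem \ref{th3.8}, where the paper merely asserts it. What the paper's longer computation buys is control of the weighted quantity carrying the factor $(1+\vert\xi\vert^{p})^{\alpha}$, i.e., convergence measured in the $\mathcal{L}^{p,\beta}_{\mathcal{U}}$-flavored expression, whereas you prove convergence only in the classical norm $\Vert\cdot\Vert_{p,\beta}$---which is, however, exactly what the statement asks for. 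Two small points you should make explicit: the Minkowski integral inequality for the Morrey norm deserves a word of justification (it follows as in the Young-type estimate (\ref{eqnref*})), and the measurability and pointwise convergence of $z\mapsto\Vert\tau_{lz}\mu-\mu\Vert_{p,\beta}$, needed for dominated convergence, follow from continuity of $\eta\mapsto\tau_{\eta}\mu$ granted by $\mu\in\mathbb{L}^{p,\beta}$.
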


\begin{proof}
We show that any function that satisfies growth condition and vanishing condition can define a function in $\mathcal{L}_{\mathcal{U}}^{p,\beta}$ or $L_{\mathcal{U}}^{p,\beta}$. To see this, assume that $\vert\psi\Vert_{k}=k$ for $k>0$. Then $\mu*\psi_{l}\in L^{p,\beta}_{0}\cap \mathcal{L}_{\mathcal{U}}^{p,\beta}\cap L^{p,\beta}_{\infty}\cap C^\infty$ for any $l>0$. To do this, let us show that $\mu*\psi_{l}\longrightarrow\mu$ in $\mathcal{L}_{\mathcal{U}}^{p,\beta}$ as $l\longrightarrow 0$. But $L^{p,\beta}\hookrightarrow \mathcal{L}_{\mathcal{U}}^{p,\beta}$. \\
For any $y\in \mathbb{R}^n$, let $\varepsilon>0$ be given and $\xi\in\mathbb{C}^n$, $r>0$ and $l>0$. Then we have
\[\Big(\int_{B(y,r)}(1+\vert \xi\vert^{p})^{\alpha}\vert(\mu*\psi_{l})(\xi)-\mu(\xi)\vert^{p}d\xi\Big)^{\frac{1}{p}}\leq\int_{\mathbb{C}^n}(1+\vert x+z\vert^{k})^{p}\vert\psi_{l}(z)\vert^{p}\Big( \int_{B(y,r)}\vert\mu(\xi-z)-\mu(\xi)\vert^{p}d\xi\Big)^{\frac{1}{p}}dz\]
Since $\mu\in L^{p,\beta}_{0}\cap L^{p,\beta}_{\infty}$, there exist $r_{0},L>0$ such that
\[\frac{1}{r^\beta}\int_{B(y,r)}\vert\mu(\xi-z)-\mu(\xi)\vert^{p}d\xi<\varepsilon\]
for any $r<r_{0}$ or $r>L$. Then we have
\begin{equation}
\label{eqn5.2}
\sup_{\xi\in\mathbb{C}^{n}}\Big(\sup_{r>0}\frac{1}{r^\beta}\int_{B(y,r)}(1+\vert \xi\vert^{p})^{\alpha}\vert(\mu*\psi_{l})(\xi)-\mu(\xi)\vert^{p}d\xi\Big)\leq M\sup\lbrace\varepsilon,C_{r_{0},L(x,l)}\rbrace
\end{equation}
where
$C_{r_{0},R(x,t)}$ has been defined in the proof of Theorem \ref{th*}. For any $\varepsilon>0$ we can find $\delta>0$ such that $\displaystyle \vert\mu(\xi-z)-\mu(\xi)\vert^{p}<\varepsilon^p$ $\forall$ $\xi$ and $z$ with $\vert z\vert<\delta$. We now split the integral into vanishing at origin and at infinity, then
\begin{equation}
\label{eqn5.3}
C_{r_{0},L(x,l)}=\int_{\vert y\vert<\delta}(1+\vert\cdot \vert^{p})^{\alpha}(\cdots)dz+\int_{\vert y\vert\geq \delta}(1+\vert \cdots\vert^{p})^{\alpha}(\cdots)dz
\end{equation}
By uniform continuity of $\mu$, we get
\begin{equation}
\label{eqn5.4}
\int_{\vert y\vert<\delta}(1+\vert\cdot \vert^{p})^{\alpha}(\cdots)dz\leq \varepsilon^{p}r_{0}^{-\beta}\vert B(y,L)\vert(1+\vert\xi \vert^{p})^{\alpha}\int_{\vert y\vert<\delta}\vert\varphi_{l}(z)\vert^{p}dz\leq \vert B(0,1)\vert r_{0}^{-\beta}(1+\vert\xi \vert^{p})^{\alpha} L^{n}\varepsilon^{p}
\end{equation}
for any $y\in\mathbb{R}^n$ and $l>0$. We derive the following inequality using the fact that $\mu\in\mathcal{L}_{\mathcal{U}}^{p,\beta}$ and $\psi\in\mathcal{U}$,
\begin{equation}
\label{eqn5.5}
\int_{\vert y\vert\geq \delta}(1+\vert\cdot \vert^{p})^{\alpha}(\cdots)dz\leq M (1+\vert\xi \vert^{p})^{\alpha}r_{0}^{-\beta} L^{\beta}l\Vert\mu\Vert_{p,\beta}
\end{equation}
Now using (\ref{eqn5.4}) and (\ref{eqn5.5}) in (\ref{eqn5.3}), from (\ref{eqn5.2}) we obtain
\[\sup_{\xi\in\mathbb{C}^{n}}\sup_{x\in\mathbb{R}^{n},r>0}(1+\vert\xi\vert^{p})^{\alpha}r^{-\beta}\int_{B(y,r)}\vert\mu*\psi_{l}-\mu(x)\vert^{p}dx   \lesssim\varepsilon\]
for very small $l>0$. Hence
\[\Big\vert(1+\vert\xi\vert^{p})^{\alpha}\Vert(\mu*\varphi_{l}-\mu\Vert_{p,\beta}\Big\vert<\varepsilon\quad\text{for}\quad l>l_{0}.\]
Hence the result.
\end{proof}

We now present the inclusion of tempered ultradistributions satisfying both vanishing and slow growth conditions since they share some common properties.

\begin{Th}
\label{th5.3}
Given $\beta\in [0,n)$ for $n\in\mathbb{N}$ and $ p\in [1,\infty)$. Let $\mu\in\mathcal{U}'$. Then $f\in\mathcal{L}_{\mathcal{U}}^{p,\beta}$ and $L^{p,\beta}_{0,\infty,*}$ can be represented (described) by $\mu$ in Morrey norm.
\end{Th}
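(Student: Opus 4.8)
The plan is to obtain the representation by a mollification argument that exploits the invariance and density results already established. Given $\mu\in\mathcal{U}'$, I would fix a mollifier $\psi\in\mathcal{U}$ with $\int_{\mathbb{C}^n}\psi(\xi)\,d\xi=1$ and with vanishing higher moments $\int_{\mathbb{C}^n}(1+|\xi|^p)^\alpha\psi(\xi)\,d\xi=0$ for $|\alpha|\neq 0$, as in Lemma \ref{lem}, and form the family $\mu*\psi_l$ for $l>0$. The first step is to verify that each $\mu*\psi_l$ already lies in the target intersection $\mathcal{L}_{\mathcal{U}}^{p,\beta}\cap L^{p,\beta}_{0,\infty,*}\cap C^\infty$: smoothness is immediate since $\psi\in\mathcal{U}\subset\mathcal{S}$ is $C^\infty$, while membership in each of $L^{p,\beta}_0$, $L^{p,\beta}_\infty$, $L^{p,\beta}_*$ and in the slow-growth space $\mathcal{L}_{\mathcal{U}}^{p,\beta}$ follows from Theorem \ref{th3.8}, which shows these spaces are invariant under convolution with an integrable kernel (and $\psi_l\in\mathcal{U}\subset L^1$).

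The second step is to establish convergence $\mu*\psi_l\to\mu$ in the Morrey norm as $l\to 0$. Here I would invoke the estimate recorded in Remark \ref{rem4.7} together with Theorem \ref{th5.2}: writing the difference through the translation operator and using that the rapid-decrease condition forces uniform continuity of the translation in $\mathcal{L}_{\mathcal{U}}^{p,\beta}$, the quantity $C_{r_0,l}(\xi)$ is split into an integral over $|z|<\delta$ and one over $|z|\geq\delta$. The first piece is controlled by the uniform continuity of $\mu$ (using the vanishing condition at the origin, $\mu\in L^{p,\beta}_0$), and the second by the vanishing condition at infinity $\mu\in L^{p,\beta}_\infty$ combined with the rapid decay of $\psi_l$. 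This yields $\Vert\mu*\psi_l-\mu\Vert_{p,\beta}\lesssim\varepsilon$ for $l$ small, exactly in the spirit of the conclusion of Theorem \ref{th5.2}.

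The third step is to transfer the representation to both named spaces simultaneously. Because Theorem \ref{th*} gives the inclusion chain $\mathcal{L}_{\mathcal{U}}^{p,\beta}\subset L^{p,\beta}_{0,\infty,*}\subset\mathbb{L}^{p,\beta}$, any approximation valid in $\mathcal{L}_{\mathcal{U}}^{p,\beta}$ descends automatically to $L^{p,\beta}_{0,\infty,*}$ in the Morrey norm; combining this with the density of $\mathcal{U}$, and hence of the mollified family, furnished by Theorem \ref{th3.1}, yields the claimed representation of $f$ by $\mu$ in both spaces.

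The hardest part will be the uniform control of the weighted factor $(1+|\xi|^p)^\alpha$ throughout the mollification, in particular verifying that the truncation condition defining $L^{p,\beta}_*$ survives the convolution: mollification spreads mass, so one must show the tails in large balls remain uniformly small, which is precisely where the rapid decrease of $\psi\in\mathcal{U}$ (rather than mere integrability) is needed. Making the splitting in Remark \ref{rem4.7} simultaneously uniform in the ultradistributional weight and in the ball radius $r$ is the delicate estimate on which the whole argument rests.
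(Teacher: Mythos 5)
Your proposal is internally coherent and every step leans on results the paper does prove elsewhere, but it takes a genuinely different route from the paper's own proof of Theorem \ref{th5.3}, and the difference matters. The paper argues by \emph{truncation}, not mollification: for $\mu\in\mathcal{L}_{\mathcal{U}}^{p,\beta}$ it sets $\mu_l=\mu\chi_{B(0,l)}$, so that $\mu-\mu_l=\mu\,\delta_l$ with $\delta_l$ the indicator of $\mathbb{C}^{n}\setminus B(0,l)$, and estimates $\Vert\mu-\mu_l\Vert_{p,\beta}$ by splitting the \emph{radii} rather than the translation variable: small radii $r<r_0$ and large radii $r>L$ are disposed of by the vanishing conditions of type (\ref{eqnvo}) and (\ref{eqnv8}), while the intermediate range $r\in[r_0,L]$ is controlled by the truncation condition of type (\ref{eqnv*}), namely $\sup_{y}r^{-\beta}\int_{B(y,L)}\vert\mu(x)\vert^{p}\delta_t(x)\,dx<\varepsilon$ for $t$ sufficiently large. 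The payoff of that decomposition is that the approximants $\mu_l$ are \emph{compactly supported} tempered ultradistributions; that is the actual content of the paper's proof, it is the half of the truncate-then-mollify pipeline that Corollary \ref{cor5.4} (density of $\mathcal{U}$) requires, and it is exactly the half that Theorem \ref{th5.2} does not supply.

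Your mollification argument, by contrast, is essentially a re-run of Theorem \ref{th5.2}: you convolve with $\psi_l$, use convolution-invariance (Theorem \ref{th3.8}) for membership, and translation continuity via $\mathcal{L}_{\mathcal{U}}^{p,\beta}\subset\mathbb{L}^{p,\beta}$ (Theorem \ref{th*}, Remark \ref{rem4.7}) for convergence. That yields smooth approximants, but $\mu*\psi_l$ has no reason to be compactly supported, so your construction does not deliver the compact-support representation the paper's proof establishes, and it renders Theorem \ref{th5.3} redundant with Theorem \ref{th5.2} instead of complementary to it. Your closing concern --- whether the $L^{p,\beta}_{*}$ tail condition survives convolution --- is legitimate but is precisely what Theorem \ref{th3.8} is invoked to guarantee; the genuinely delicate point in the paper's route lies elsewhere, in the dominated-convergence step that kills $\int_{B(y,L)}\vert\mu\vert^{p}\delta_t\,dx$ uniformly in $y$ over the intermediate radii. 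That is where the $*$-condition is indispensable: by Theorem \ref{th4.1} there are functions in $L^{p,\beta}_{0}\cap L^{p,\beta}_{\infty}$ for which (\ref{eqnv*}) fails, so the vanishing conditions you rely on in your second step would not by themselves make the truncation tails small, and any proof of this theorem that aims at compactly supported approximants must pass through (\ref{eqnv*}) as the paper's does.
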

\begin{proof}
We show that function from $\mathcal{L}_{\mathcal{U}}^{p,\beta}$ can be defined by tempered ultradistribution with compact support in Morrey norm. Take $\mu\in\mathcal{L}_{\mathcal{U}}^{p,\beta}$. Let $\delta_{t}:=\delta_{\mathbb{C}^{n}\setminus B(0,t)}$ for $t\in\mathbb{N}$. Then for each $m\in\mathbb{N}$, we get $\delta_{t}=\delta=1$ if $\vert\xi\vert\neq 0$ on the ball $B(0,t)$ and otherwise.\\
Assume that $\varepsilon>0$ is arbitrary. Using the growth condition one finds $r_{0},L>0$ such that
\[\sup_{\xi,y\in\mathbb{C}^n}(1+\vert\xi\vert^{p})^{\alpha}r^{-\beta}\int_{B(y,r)}\vert\mu-\mu_{l}(x)\vert^{p}dx=\sup_{\xi,x\in\mathbb{C}^n}(1+\vert\xi\vert^{p})^{\alpha}r^{-\beta}\int_{B(y,r)}\vert\mu\delta_{l}(x)\vert^{p}dx<\varepsilon\]
Thus for any $r<r_{0}$ or $r>L$, we have
\[\Vert\mu-\mu_{l}\Vert_{p,\beta}^{p}<M\sup\lbrace \varepsilon,C_{0,L}\rbrace\]
By the condition of tempered ultradistributions, we get
\begin{align*}
C_{0,R}(t) &=\sup_{y\in\mathbb{C}^n,r>0}r^{-\beta}\int_{B(y,r)}\vert\mu(x)\vert^{p}\delta_{t}(x)dx\\             &\leq \sup_{y\in\mathbb{C}^{n},r\in[r_{0},L]}r_{0}^{-\beta}\int_{B(y,L)}\vert\mu(x)\vert^{p}\delta_{t}(x)dx     <\varepsilon
\end{align*}
for sufficiently large $t$. Therefore
\[\Vert\mu-\mu_{l}\Vert_{p,\beta}^{p}<\varepsilon\quad\forall\quad l>l_0\]
\end{proof}

From Theorem \ref{th5.3} we present the representation of the spaces $\mathcal{L}_{\mathcal{U}}^{p,\beta}$ and $L_{\mathcal{U}}^{p,\beta}$ in Corollary \ref{cor5.4}.

\begin{Cor}
\label{cor5.4}
Let $\beta\in [0,n)$ and $ p\in [1,\infty)$ be given. The space $\mathcal{U}$ is dense in $\mathcal{L}_{\mathcal{U}}^{p,\beta}$ and $L_{\mathcal{U}}^{p,\beta}$.
\end{Cor}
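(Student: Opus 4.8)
The plan is to obtain the density of $\mathcal{U}$ by composing two approximation steps already established in the preceding results: truncation to compact support from Theorem~\ref{th5.3}, followed by smoothing via convolution with a rapidly decreasing ultradifferentiable kernel from Theorem~\ref{th5.2} and Remark~\ref{rem4.7}. Fix $\mu\in\mathcal{L}_{\mathcal{U}}^{p,\beta}$ and $\varepsilon>0$; the goal is to produce $\gamma\in\mathcal{U}$ with $\Vert\mu-\gamma\Vert_{p,\beta}<\varepsilon$.

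First I would apply Theorem~\ref{th5.3}: since every element of $\mathcal{L}_{\mathcal{U}}^{p,\beta}$ is represented in the Morrey norm by a tempered ultradistribution with compact support, there is a compactly supported $\nu\in\mathcal{U}'$ with $\Vert\mu-\nu\Vert_{p,\beta}<\varepsilon/2$. Next I would mollify: choosing $\psi\in\mathcal{U}$ with $\int_{\mathbb{C}^{n}}\vert\psi(\xi)\vert\,d\xi=1$ and forming the dilated convolution $\nu*\psi_{l}$, Theorem~\ref{th5.2} (equivalently Remark~\ref{rem4.7}) gives $\nu*\psi_{l}\to\nu$ in the Morrey norm as $l\to 0$, so I can fix $l$ small enough that $\Vert\nu-\nu*\psi_{l}\Vert_{p,\beta}<\varepsilon/2$. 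The triangle inequality then yields $\Vert\mu-\nu*\psi_{l}\Vert_{p,\beta}<\varepsilon$, and it remains only to certify that $\gamma:=\nu*\psi_{l}$ actually belongs to $\mathcal{U}$. The global case $L_{\mathcal{U}}^{p,\beta}$ is handled by the identical two-step argument, since both Theorem~\ref{th5.3} and the mollification convergence of Theorem~\ref{th5.2} are available for the global Morrey norm as well.

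The main obstacle I anticipate is precisely the verification that $\nu*\psi_{l}$ lies in $\mathcal{U}$ rather than merely in $C^{\infty}$. One must check that convolution with the entire kernel $\psi_{l}$ preserves both the entire (ultradifferentiable) character and the polynomial-growth control~(\ref{eqn1}) that defines membership in $\mathcal{U}$. This is essentially a Paley--Wiener argument for tempered ultradistributions: using that $\nu$ has compact support, the transform factorises as a product inheriting the required decay from $\widehat{\psi_{l}}$, and the holomorphic extension must be shown to respect the strip $\vert\mathrm{Im}(\xi)\vert<p$ appearing in the norm~(\ref{eqn1}). Controlling this extension across the strip, uniformly in the weight $(1+\vert\xi\vert^{p})^{\alpha}$, is the delicate technical point on which the whole argument turns.

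Once that membership is secured, the corollary follows immediately: the approximant $\gamma=\nu*\psi_{l}\in\mathcal{U}$ satisfies $\Vert\mu-\gamma\Vert_{p,\beta}<\varepsilon$, and since $\mu\in\mathcal{L}_{\mathcal{U}}^{p,\beta}$ (respectively $L_{\mathcal{U}}^{p,\beta}$) and $\varepsilon>0$ were arbitrary, $\mathcal{U}$ is dense in both spaces. I would present the local version in full and then remark that the global version is obtained by repeating the estimates verbatim with the global Morrey norm in place of the local one.
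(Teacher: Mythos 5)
Your two-step scheme --- truncation to compact support via Theorem~\ref{th5.3}, then mollification by a dilated kernel $\psi_{l}$ with $\psi\in\mathcal{U}$ via Theorem~\ref{th5.2} and Remark~\ref{rem4.7}, then the triangle inequality --- is exactly the derivation the paper intends: the paper supplies no written proof of the corollary at all, only the sentence that it follows ``from Theorem~\ref{th5.3}'', so in approach you coincide with the source and in fact make explicit what it leaves implicit. (Note that density of $\mathcal{U}$ is also asserted inside the proof of Theorem~\ref{th3.1} by a different decomposition of $\hat{\mu}$; the corollary as placed clearly means the truncate-and-mollify route you take, and since the approximant of Theorem~\ref{th5.3} is the sharp cutoff $\mu\chi_{B(0,l)}$, an honest locally integrable function, forming the convolution $\nu*\psi_{l}$ is unproblematic.)

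The point you flag but do not close is, however, a genuine gap --- and it is equally a gap in the paper, whose own mollification results (Theorem~\ref{th5.2}, Remark~\ref{rem4.7}) certify only $\nu*\psi_{l}\in\mathbb{L}^{p,\beta}\cap C^{\infty}$, never membership in $\mathcal{U}$. Your Paley--Wiener sketch is the right tool and should be executed: since $\nu$ has compact support, $\widehat{\nu*\psi_{l}}=\hat{\nu}\,\widehat{\psi_{l}}$, and the Paley--Wiener theorem makes $\hat{\nu}$ entire with a bound of the form $C(1+\vert\xi\vert)^{N}e^{a\vert\mathrm{Im}\,\xi\vert}$, which on each fixed horizontal strip $\vert\mathrm{Im}\,\xi\vert<p$ reduces to a polynomial bound; multiplying by the rapid strip-decay of $\widehat{\psi_{l}}$ demanded in~(\ref{eqn1}) shows the product retains that decay on every such strip, hence $\nu*\psi_{l}\in\mathcal{U}$. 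With that paragraph written out your argument is complete; without it, your approximants are only known to be smooth Morrey functions, and density of $\mathcal{U}$ (as opposed to density of $C^{\infty}$) is not yet established --- precisely the step the paper silently skips.
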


As we present the result for variation of order placed on the constant exponents, we get
\begin{Th}
\label{th5.5}
Given $p\leq p'\in [1,\infty)$ and $0\leq \beta,u\leq n$. Then the following continuous embedding holds:
\begin{equation}
\label{eqnth5.5}
\mathcal{L}_{\mathcal{U}}^{p',u}\hookrightarrow\mathcal{L}_{\mathcal{U}}^{p,\beta}\quad(\text{respectively},\quad L_{\mathcal{U}}^{p',u}\hookrightarrow L_{\mathcal{U}}^{p,\beta})
\end{equation}
\end{Th}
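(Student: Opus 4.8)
The plan is to reduce the weighted embedding to the classical Morrey embeddings already recorded in Theorem \ref{thmp} and Corollary \ref{corq}, treating the factor $(1+|\xi|^p)^\alpha$ as an auxiliary weight that is carried identically on both sides. By the very definition of the tempered Morrey norm the quantity $(1+|\xi|^p)^\alpha$ sits inside the integrand, while the finiteness built into the $\mathcal{U}'$-growth condition guarantees $\sup_{\xi}(1+|\xi|^p)^\alpha|\mu(\xi)|<\infty$; this is exactly the control exploited in the proofs of Theorem \ref{thmscaling} and Lemma \ref{lem3.8}, and I would invoke it again here so that the weight may be treated as a uniformly bounded multiplier.

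First I would fix $\mu\in\mathcal{L}_{\mathcal{U}}^{p',u}$, a center $y$, a radius $r>0$, and a point $\xi$, and apply H\"older's inequality (Theorem \ref{th5.6}) on the ball $\bar{B}(y,r)$ with the conjugate pair $(p'/p,(p'/p)')$ to the weighted integrand $(1+|\xi|^p)^\alpha|\mu|^p$. Since $p\le p'$, this passes from the $p$-th power integral to the $p'$-th power integral at the cost of the volume factor $|\bar{B}(y,r)|^{1-p/p'}\lesssim r^{\,n(1-p/p')}$, so that
\[
\frac{1}{r^\beta}\int_{\bar{B}(y,r)}(1+|\xi|^p)^\alpha|\mu(\xi)|^p\,d\xi
\;\lesssim\;
r^{\,n(1-p/p')-\beta}\Big(\int_{\bar{B}(y,r)}(1+|\xi|^p)^\alpha|\mu(\xi)|^{p'}\,d\xi\Big)^{p/p'}.
\]
Next I would normalize the right-hand integral by $r^{u}$ to expose the $\mathcal{L}_{\mathcal{U}}^{p',u}$ norm, producing an extra power $r^{\,up/p'}$ and leaving the residual exponent $E=n(1-p/p')-\beta+up/p'=(n-\beta)+(u-n)\tfrac{p}{p'}$, so that the displayed left-hand side is bounded by $r^{E}\Vert\mu\Vert_{\mathcal{L}_{\mathcal{U}}^{p',u}}^{p}$.

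The dimensional balance is the heart of the matter. The relation $(\beta-n)p'\le(u-n)p$ driving Corollary \ref{corq} is precisely the statement that $E$ carries the sign making the supremum over the relevant radii finite, while the equality $(\beta-n)p'=(u-n)p$ from Theorem \ref{thmp} forces $E=0$ and yields the scale-invariant estimate needed for the homogeneous space $L_{\mathcal{U}}^{p',u}\hookrightarrow L_{\mathcal{U}}^{p,\beta}$. Taking the $1/p$-th power and then the supremum over $y$, $r$, and $\xi$ then gives $\Vert\mu\Vert_{\mathcal{L}_{\mathcal{U}}^{p,\beta}}\lesssim\Vert\mu\Vert_{\mathcal{L}_{\mathcal{U}}^{p',u}}$, which is the asserted continuous embedding; the parenthetical global statement follows from the identical chain of estimates.

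I expect the main obstacle to be the careful bookkeeping of the weight $(1+|\xi|^p)^\alpha$ together with the outer supremum in $\xi$: one must verify that the weight can be commuted past the ball integration uniformly in $\xi$ without disturbing the power-of-$r$ count, and that the supremum over $\xi$ is taken consistently on both sides. Because the weight appears with the same exponent $\alpha$ on each side and is uniformly bounded on the support dictated by the growth condition, this is the one place where the tempered-ultradistribution structure is genuinely used rather than merely the classical Morrey machinery; everything else is the standard H\"older-plus-scaling argument underlying Theorem \ref{thmp}.
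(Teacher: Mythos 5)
Your proposal is correct and follows essentially the same route as the paper's own proof, which likewise reduces (\ref{eqnth5.5}) to an application of H\"older's inequality under the balance condition $(\beta-n)p'=p(u-n)$ (the condition behind Theorem \ref{thmp} and Corollary \ref{corq}), carries the weight $(1+\vert\xi\vert^{p})^{\alpha}$ with the same exponent on both sides, and concludes by taking the suprema over $y$, $r$ and $\xi$. Your explicit computation of the volume factor $r^{n(1-p/p')}$ and the residual exponent $E=(n-\beta)+(u-n)\tfrac{p}{p'}$ merely fills in the bookkeeping that the paper leaves implicit when it asserts the key inequality (\ref{eqnlocal3}) without derivation.
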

\begin{proof}
The embedding in (\ref{eqnth5.5} can be achieved through the application of H$\ddot{\text{o}}$lder's inequality under the condition $q(\beta-n)=p(u-n)$. We can also prove it in an alternative method. To do this, we show that $\Vert\cdot\Vert_{\mathcal{L}_{\mathcal{U}}^{p,\beta}}\leq \Vert\cdot\Vert_{\mathcal{L}_{\mathcal{U}}^{p',u}}$.  Let $\mu\in\mathcal{L}_{\mathcal{U}}^{p',u}$ for $0\leq \beta,u\leq n$.\\
Then
\begin{equation}
\label{eqnlocal}
\Vert\mu\Vert_{\mathcal{L}_{\mathcal{U}}^{p',u}}=\sup_{\xi\in\mathbb{C};y\in\Omega;r>0}\Big(\frac{1}{r^u}\int_{\bar{B}(y,r)}(1+\vert\xi\vert^{p})^{\alpha}\vert\mu(\xi)\vert^{p}d\xi\Big)^{\frac{1}{p'}}
\end{equation}
and
\begin{equation}
\label{eqnlocal2}
\Vert\mu\Vert_{\mathcal{L}_{\mathcal{U}}^{p,\beta}}=\sup_{\xi\in\mathbb{C};y\in\Omega;r>0}\Big(\frac{1}{r^\beta}\int_{\bar{B}(y,r)}(1+\vert\xi\vert^{p})^{\alpha}\vert\mu(\xi)\vert^{p}d\xi\Big)^{\frac{1}{p}}
\end{equation}
for $\vert\alpha\vert\leq k$.\\
Since $p\leq p'$, then via application of H$\ddot{\text{o}}$lder's inequality, we have
\begin{equation}
\label{eqnlocal3}
\Big(\frac{1}{r^\beta}\int_{\bar{B}(y,r)}(1+\vert\xi\vert^{p})^{\alpha}\vert\mu(\xi)\vert^{p}d\xi\Big)^{\frac{1}{p}}\leq \Big(\frac{1}{r^u}\int_{\bar{B}(y,r)}(1+\vert\xi\vert^{p})^{\alpha}\vert\mu(\xi)\vert^{p}d\xi\Big)^{\frac{1}{p'}}
\end{equation}
The consideration of (\ref{eqnlocal3}) in (\ref{eqnlocal2}) and (\ref{eqnlocal}) produced the required result.
\end{proof}

Finally we obtain the following embedding result.

\begin{Th}
\label{th5.6}
For $0\leq \beta<n$ and $1< p<\infty$. Then
\[\mathcal{U}\hookrightarrow L_{\mathcal{U}}^{p,\beta}\hookrightarrow\mathcal{L}_{\mathcal{U}}^{p,\beta}\hookrightarrow \mathcal{U}'.\]
\end{Th}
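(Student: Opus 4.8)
The plan is to verify the three arrows of the chain one at a time, treating the two outer embeddings by direct norm estimates and the middle one by appeal to an inclusion already recorded in the text.

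First I would establish $\mathcal{U}\hookrightarrow L_{\mathcal{U}}^{p,\beta}$. For $f\in\mathcal{U}$ the defining condition (\ref{eqn1}) supplies a finite constant $C:=\sup_{\xi}(1+|\xi|^{p})^{\alpha}|f(\xi)|$ together with rapid decrease of the weighted function. Inserting this pointwise bound into the weighted Morrey integral of Definition \ref{def1*} and estimating over a ball $\bar B(y,r)$, the factor $r^{-\beta}$ is absorbed for small $r$ because $\beta<n$ forces a contribution of order $r^{n-\beta}\to 0$, while for large $r$ or distant centres $y$ the rapid decrease of $f$ keeps the integral uniformly bounded. Taking the supremum over $y$ and $r$ then yields $\Vert f\Vert_{L^{p,\beta}_{\mathcal{U}}}\lesssim C$, which is the claimed continuous embedding.

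For the middle arrow $L_{\mathcal{U}}^{p,\beta}\hookrightarrow\mathcal{L}_{\mathcal{U}}^{p,\beta}$ I would invoke the inclusion stated immediately after Definition \ref{def1**}: in the present range $0\le\beta<n$ the non-homogeneous (local) tempered Morrey space contains the homogeneous (global) one, since the supremum defining the global norm already runs over all radii and centres and hence dominates the local norm pointwise.

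The last and most delicate arrow is $\mathcal{L}_{\mathcal{U}}^{p,\beta}\hookrightarrow\mathcal{U}'$. For $\mu\in\mathcal{L}_{\mathcal{U}}^{p,\beta}$ I would define the functional $\phi\mapsto\int_{\mathbb{C}^{n}}\mu(\xi)\phi(\xi)\,d\xi$ on test functions $\phi\in\mathcal{U}$ and show it is well defined and continuous in the topology of $\mathcal{U}$. Covering $\mathbb{C}^{n}$ by unit balls centred at a lattice $\{y_{k}\}$, I would apply H\"older's inequality on each ball, bound the $\mu$-factor by $\Vert\mu\Vert_{\mathcal{L}^{p,\beta}_{\mathcal{U}}}$ and the $\phi$-factor by the weighted seminorm $\Vert\phi\Vert_{p}$ of (\ref{eqn1}), and then sum over $k$. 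The convergence of $\sum_{k}(1+|y_{k}|^{p})^{-\alpha}$, which simultaneously makes the pairing finite and delivers the continuity estimate $|\langle\mu,\phi\rangle|\lesssim\Vert\mu\Vert_{\mathcal{L}^{p,\beta}_{\mathcal{U}}}\Vert\phi\Vert_{p}$, is precisely where the weight exponent $|\alpha|$ must be taken large enough relative to $n$ and $\beta$; this summability is the main obstacle and is the ultradistributional analogue of the tail estimate underlying $\mathcal{L}^{p,\beta}\hookrightarrow\mathcal{S}'$ in (\ref{eqnS}). Once the three arrows are in hand, continuity of the composite chain is immediate.
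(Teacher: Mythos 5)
Your three arrows are all valid, but your route is genuinely different from the paper's at the two ends of the chain. For the crucial final embedding $\mathcal{L}_{\mathcal{U}}^{p,\beta}\hookrightarrow\mathcal{U}'$ the paper never constructs a functional on $\mathcal{U}$ at all: it argues by a direct norm manipulation, asserting
\[
\Big(\tfrac{1}{p-1}\Big)\sup_{\xi\in\mathbb{C}}\Big[(1+\vert\xi\vert^{p})^{\alpha}\vert\mu(\xi)\vert^{p-1}\Big]^{\frac{1}{p}}\leq \sup_{\xi\in\mathbb{C}^{n};\,y\in\Omega;\,r>0}r^{-\beta}\int_{B(y,r)}(1+\vert\xi\vert^{p})^{\alpha}\vert\mu(\xi)\vert^{p}\,dx=\Vert\mu\Vert_{\mathcal{L}_{\mathcal{U}}^{p,\beta}}^{p},
\]
i.e.\ it bounds a weighted supremum of $\mu$ by the tempered Morrey norm and stops there; no pairing $\langle\mu,\phi\rangle$ and no continuity estimate against the seminorms of $\mathcal{U}$ ever appear. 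Your lattice-covering argument --- H\"older on unit balls $B(y_{k},1)$, the $\mu$-factor controlled by the Morrey norm at radius $r=1$, the $\phi$-factor by a rapid-decrease seminorm, then summation over $k$ --- is the standard proof of $\mathcal{L}^{p,\beta}\hookrightarrow\mathcal{S}'$ from (\ref{eqnS}) transplanted to the weighted setting, and it actually delivers what the theorem asserts: a well-defined functional with $\vert\langle\mu,\phi\rangle\vert\lesssim\Vert\mu\Vert_{\mathcal{L}^{p,\beta}_{\mathcal{U}}}\Vert\phi\Vert$. One simplification: since $\phi\in\mathcal{U}$ satisfies (\ref{eqn1}) for \emph{every} order, the summability you flag as the main obstacle is free --- you need not tune $\vert\alpha\vert$ against $n$ and $\beta$, but can simply borrow decay of any order $m$ exceeding the real dimension from $\phi$ and leave the $\mu$-factor with the plain radius-one Morrey bound. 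At the other end, you give a genuine proof of $\mathcal{U}\hookrightarrow L_{\mathcal{U}}^{p,\beta}$ (small radii absorbed by $r^{n-\beta}$ using $\beta<n$, large radii by rapid decrease), whereas the paper's proof only remarks that $\mathcal{U}\hookrightarrow\mathcal{U}'$ ``from definition'' and leaves that first arrow without argument; the middle arrow you treat identically to the paper. In short, your proposal is correct (modulo the caveat, shared with the paper, that elements of $\mathcal{U}'$ are handled as pointwise-defined functions rather than Silva boundary values), and on the delicate third embedding it is more rigorous than the printed proof, at the cost of the explicit covering-and-summation bookkeeping the paper avoids.
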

\begin{proof}
The inclusion $L^{p,\beta}_{\mathcal{U}}\hookrightarrow\mathcal{L}_{\mathcal{U}}^{p,\beta}$ is very obvious since the global version is continuously embedded in the local version. Also from definition $\mathcal{U}\hookrightarrow\mathcal{U}'$ since $\mathcal{U}'$ is the dual of the space of all rapidly decreasing ultradifferentiable functions. It remains to show that $\mathcal{L}_{\mathcal{U}}^{p,\beta}\hookrightarrow\mathcal{U}'$. To do this, let $\mu\in \mathcal{L}_{\mathcal{U}}^{p,\beta}$ such that $\displaystyle \sup_{\xi\in\mathbb{C}}\Big\{(1+\vert\xi\vert^{p})^{\alpha}\vert\mu(\xi)\vert^{p}\Big\}<\infty$ for $\vert\alpha\vert\leq k$, $k\in \mathbb{Z}^{+}$. Then we have for any ball $B(y,r),\quad r>0$
\begin{align*}
\Big(\frac{1}{p-1}\Big)\sup_{\xi\in\mathbb{C}}\Big[(1+\vert\xi\vert^{p})^{\alpha}\vert\mu(\xi)\vert^{p-1}\Big]^{\frac{1}{p}} &=\sup_{\xi\in\mathcal{C}^{n}; y\in\Omega}\int_{B(y,r)}(1+\vert\xi\vert^{p})^{\alpha}\vert\mu(\xi)\vert^{p}dx\\
     &\leq \sup_{\substack{\xi\in\mathcal{C}^{n}\\ y\in\Omega;r>0}}r^{-\beta}\int_{B(y,r)}(1+\vert\xi\vert^{p})^{\alpha}\vert\mu(\xi)\vert^{p}dx\\
     &=\Vert\mu\Vert_{\mathcal{L}_{\mathcal{U}}^{p,\beta}}^{p}
\end{align*}
as required.
\end{proof}

\section{Appendix}
\subsection{Some Known Results}
\begin{Th} (Lebesgue's Dominated Convergence Theorem)
\label{thapp1}
Let $(f_{n})$ be a sequence of complex-valued measurable functions on a measure space $\displaystyle \Big(X,\sum,\mu\Big)$. Suppose that the sequence converges to $f$ and is dominated by some integrable function $g$ such that
\[\vert f_{n}(x)\vert\leq \vert g\vert\quad\forall\quad n\in\mathbb{N}\quad\text{and}\quad x\in X\]
Then $f$ is integrable (in the Lebesgue space) and
\[\lim_{n\to\infty}\int_{x}\vert f_{n}-f\vert d\mu=0\]
which also implies that $\displaystyle \lim_{n\to\infty}\int_{X}f_{n}d\mu=\int_{X} fd\mu.$
\end{Th}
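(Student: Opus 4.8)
The plan is to run the classical argument built on Fatou's lemma. First I would extract the two consequences of the hypotheses that drive the proof: since $f_n \to f$ pointwise (a.e.) and $\vert f_n\vert \le \vert g\vert$ for every $n$, letting $n\to\infty$ gives $\vert f\vert \le \vert g\vert$ a.e., so $f$ is measurable and, because $\int_X \vert g\vert\,d\mu < \infty$, integrable. This disposes of the integrability claim and, crucially, guarantees that all the integrals appearing below are finite.

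Next I would target the quantitative statement $\lim_n \int_X \vert f_n - f\vert\,d\mu = 0$, which is the heart of the theorem. The key device is to apply Fatou's lemma to the nonnegative sequence $h_n := 2\vert g\vert - \vert f_n - f\vert$; nonnegativity holds since $\vert f_n - f\vert \le \vert f_n\vert + \vert f\vert \le 2\vert g\vert$ a.e. As $f_n \to f$ a.e., we have $\vert f_n - f\vert \to 0$ a.e., hence $\liminf_n h_n = 2\vert g\vert$ a.e. Fatou's lemma then yields
\[
\int_X 2\vert g\vert\,d\mu = \int_X \liminf_n h_n\,d\mu \le \liminf_n \int_X h_n\,d\mu = \int_X 2\vert g\vert\,d\mu - \limsup_n \int_X \vert f_n - f\vert\,d\mu.
\]
Because $\int_X 2\vert g\vert\,d\mu$ is finite I may subtract it from both sides to obtain $\limsup_n \int_X \vert f_n - f\vert\,d\mu \le 0$, and since the integrands are nonnegative this forces $\lim_n \int_X \vert f_n - f\vert\,d\mu = 0$, the first assertion.

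Finally, the convergence of the integrals themselves is immediate from the triangle inequality for integrals:
\[
\Big\vert \int_X f_n\,d\mu - \int_X f\,d\mu \Big\vert = \Big\vert \int_X (f_n - f)\,d\mu \Big\vert \le \int_X \vert f_n - f\vert\,d\mu \longrightarrow 0,
\]
so $\lim_n \int_X f_n\,d\mu = \int_X f\,d\mu$. The only genuinely delicate point is the cancellation step: it is legitimate precisely because the dominating function is integrable, so that $\int_X 2\vert g\vert\,d\mu < \infty$ and the subtraction produces no $\infty - \infty$ indeterminacy; without the integrability of $g$ the whole argument collapses. Everything else is bookkeeping, with Fatou's lemma carrying the analytic weight.
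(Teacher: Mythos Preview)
Your argument is correct and is exactly the standard proof of the dominated convergence theorem via Fatou's lemma applied to the nonnegative sequence $2\vert g\vert-\vert f_n-f\vert$; the cancellation step is justified precisely as you note, by the finiteness of $\int_X 2\vert g\vert\,d\mu$. The paper itself offers no proof of this statement: it appears in the Appendix under ``Some Known Results'' and is simply quoted without argument, so there is nothing to compare your approach against.
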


\begin{Th} (H$\ddot{\text{o}}$lder Inequality)
\label{thapp2}
Let $\displaystyle \Big(X,\sum,\mu\Big)$ be a measure space and let $\frac{1}{p}+\frac{1}{p'}=1$. Then for all measurable real or complex-valued functions $f$ and $g$ on $X$,
\[\Vert fg\Vert_{1}\leq \Vert f\Vert_{p}\Vert f\Vert_{p'}\]
\end{Th}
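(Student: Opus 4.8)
The plan is to reduce the integral statement to the pointwise scalar Young inequality together with a normalization argument. (I note the right-hand side should read $\Vert f\Vert_{p}\Vert g\Vert_{p'}$; I prove that form.) First I would dispatch the degenerate cases. If $\Vert f\Vert_{p}=0$, then $f=0$ $\mu$-almost everywhere, so $fg=0$ a.e. and both sides vanish; the case $\Vert g\Vert_{p'}=0$ is symmetric. If either factor on the right equals $+\infty$, the inequality is vacuous. Hence I may assume $0<\Vert f\Vert_{p}<\infty$ and $0<\Vert g\Vert_{p'}<\infty$.

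The analytic core is the scalar inequality: for $a,b\geq 0$ and conjugate exponents $\tfrac1p+\tfrac1{p'}=1$,
\[ab\leq \frac{a^{p}}{p}+\frac{b^{p'}}{p'}.\]
I would obtain this from the concavity of the logarithm. For $a,b>0$, writing $\log(ab)=\tfrac1p\log(a^{p})+\tfrac1{p'}\log(b^{p'})$ and applying Jensen's inequality (concavity of $\log$) with weights $\tfrac1p$ and $\tfrac1{p'}$ at the points $a^{p}$ and $b^{p'}$ gives
\[\log(ab)\leq \log\Big(\frac{a^{p}}{p}+\frac{b^{p'}}{p'}\Big),\]
and exponentiating yields the claim; the boundary cases $a=0$ or $b=0$ are immediate.

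With Young's inequality in hand, I would normalize by setting $F=\lvert f\rvert/\Vert f\Vert_{p}$ and $G=\lvert g\rvert/\Vert g\Vert_{p'}$, so that $\int_{X}F^{p}\,d\mu=\int_{X}G^{p'}\,d\mu=1$. Applying the scalar inequality pointwise with $a=F(x)$ and $b=G(x)$ and integrating over $X$ gives
\[\int_{X}F(x)G(x)\,d\mu(x)\leq \frac{1}{p}\int_{X}F^{p}\,d\mu+\frac{1}{p'}\int_{X}G^{p'}\,d\mu=\frac{1}{p}+\frac{1}{p'}=1.\]
Unwinding the normalization, $\int_{X}\lvert fg\rvert\,d\mu\leq \Vert f\Vert_{p}\Vert g\Vert_{p'}$, which is exactly $\Vert fg\Vert_{1}\leq \Vert f\Vert_{p}\Vert g\Vert_{p'}$. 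The only step requiring genuine care is the scalar Young inequality, i.e. the justification of the convexity estimate; the rest is bookkeeping, and measurability of $\lvert fg\rvert$ ensures every integral above is well defined.
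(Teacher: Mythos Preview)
Your proof is correct and is the standard textbook argument: reduce to Young's scalar inequality via concavity of the logarithm, normalize $f$ and $g$, integrate, and unwind. You also rightly flag the typo in the stated inequality (the second factor should be $\Vert g\Vert_{p'}$).

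There is nothing to compare against here: the paper lists H\"older's inequality in its Appendix under ``Some Known Results'' and states it without proof, as a classical fact invoked elsewhere (e.g.\ in Theorem~\ref{thmp} and Theorem~\ref{th5.5}). Your argument supplies exactly the proof one would expect in any measure-theory reference.
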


\begin{Th} (Young Convolution Inequalitx)
\label{thapp3}
Let $\mu\in L^{p}(\mathbb{R}^{n})$ and $\gamma\in L^{p'}(\mathbb{R}^{n})$ and let $\frac{1}{p}+\frac{1}{p'}=\frac{1}{r}+1$ with $1\leq p,p',r\leq \infty$. Then
\[\Vert \mu*\gamma\Vert_{r}\leq \Vert \mu\Vert_{p}\Vert \gamma\Vert_{p'}\]
Equivalently, if $p,p',r\geq 1$ and $\frac{1}{p}+\frac{1}{p'}+\frac{1}{r}=2$ then
\[\Big\vert\int_{\mathbb{R}^n}\int_{\mathbb{R}^n} \mu(x)\gamma(x-y)h(x)dxdy\Big\vert\leq \Big(\int_{\mathbb{R}^{n}}\vert \mu\vert^{p}\Big)^{\frac{1}{p}}\Big(\int_{\mathbb{R}^{n}}\vert \gamma\vert^{p'}\Big)^{\frac{1}{p'}}\Big(\int_{\mathbb{R}^{n}}\vert h\vert^{r}\Big)^{\frac{1}{r}}\]
\end{Th}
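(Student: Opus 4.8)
The statement is classical Young's inequality, and the plan is to prove the two displayed forms together, deriving the norm inequality from the bilinear one by duality. The first observation is that the two exponent conditions are compatible: writing $\rho$ for the target exponent of the first display (so $\tfrac1p+\tfrac1{p'}=\tfrac1\rho+1$) and letting $\rho'$ be its conjugate, one has $\tfrac1p+\tfrac1{p'}+\tfrac1{\rho'}=\bigl(\tfrac1\rho+1\bigr)+\bigl(1-\tfrac1\rho\bigr)=2$, which is exactly the condition governing the bilinear form. Hence, using the duality $\Vert g\Vert_{\rho}=\sup_{\Vert h\Vert_{\rho'}\le1}\bigl|\int g\,h\bigr|$, it suffices to prove the bilinear estimate $\bigl|\int\!\int\mu(x)\gamma(x-y)h(y)\,dx\,dy\bigr|\le\Vert\mu\Vert_p\Vert\gamma\Vert_{p'}\Vert h\Vert_r$ whenever $\tfrac1p+\tfrac1{p'}+\tfrac1r=2$, and then specialise to $r=\rho'$. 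I would assume first that $1<p,p',r<\infty$, postponing the endpoint indices.

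The engine is the generalised three-factor H\"older inequality, obtained by iterating Theorem \ref{thapp2}. I would write the integrand as a product of three pieces,
\[
|\mu(x)|\,|\gamma(x-y)|\,|h(y)|=\bigl(|\mu(x)|^{p}|\gamma(x-y)|^{p'}\bigr)^{\theta_1}\bigl(|\gamma(x-y)|^{p'}|h(y)|^{r}\bigr)^{\theta_2}\bigl(|\mu(x)|^{p}|h(y)|^{r}\bigr)^{\theta_3},
\]
and determine $\theta_1,\theta_2,\theta_3$ by matching the exponents of the three functions: $\theta_1+\theta_3=\tfrac1p$, $\theta_1+\theta_2=\tfrac1{p'}$, $\theta_2+\theta_3=\tfrac1r$. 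Summing these three relations and invoking the hypothesis gives $\theta_1+\theta_2+\theta_3=\tfrac12\bigl(\tfrac1p+\tfrac1{p'}+\tfrac1r\bigr)=1$, which is precisely the admissibility requirement for H\"older with exponents $1/\theta_i$. Applying it bounds the double integral by the product of $\bigl(\int\!\int|\mu|^{p}|\gamma(x-y)|^{p'}\bigr)^{\theta_1}$, $\bigl(\int\!\int|\gamma(x-y)|^{p'}|h|^{r}\bigr)^{\theta_2}$ and $\bigl(\int\!\int|\mu|^{p}|h|^{r}\bigr)^{\theta_3}$.

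Each of these three double integrals factors by Tonelli's theorem and translation invariance of Lebesgue measure into $\Vert\mu\Vert_p^{p}\Vert\gamma\Vert_{p'}^{p'}$, $\Vert\gamma\Vert_{p'}^{p'}\Vert h\Vert_r^{r}$ and $\Vert\mu\Vert_p^{p}\Vert h\Vert_r^{r}$ respectively; raising to the powers $\theta_i$ and collecting, the exponent of $\Vert\mu\Vert_p$ is $p(\theta_1+\theta_3)=1$, that of $\Vert\gamma\Vert_{p'}$ is $p'(\theta_1+\theta_2)=1$, and that of $\Vert h\Vert_r$ is $r(\theta_2+\theta_3)=1$, which yields the bilinear bound. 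Taking the supremum over $\Vert h\Vert_{\rho'}\le1$ then gives $\Vert\mu*\gamma\Vert_{\rho}\le\Vert\mu\Vert_p\Vert\gamma\Vert_{p'}$. I expect the only real obstacle to be the accounting that isolates the admissibility identity $\theta_1+\theta_2+\theta_3=1$ as the exact incarnation of the hypothesis $\tfrac1p+\tfrac1{p'}=\tfrac1\rho+1$; everything else is Tonelli and arithmetic. The remaining care is in the endpoint cases, where some $\theta_i$ vanishes and the corresponding factor passes to an $L^{\infty}$ bound — in particular when $\tfrac1p+\tfrac1{p'}=1$ the convolution is estimated directly by $\vert(\mu*\gamma)(y)\vert\le\Vert\mu\Vert_p\Vert\gamma\Vert_{p'}$ via Theorem \ref{thapp2}.
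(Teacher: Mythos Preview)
The paper does not supply its own proof of this statement: Theorem~\ref{thapp3} sits in the Appendix under ``Some Known Results'' and is merely stated, without argument. There is therefore nothing to compare against on the paper's side.

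Your argument is the standard and correct one. The three-exponent splitting with $\theta_1=1-\tfrac1r$, $\theta_2=1-\tfrac1p$, $\theta_3=1-\tfrac1{p'}$ (which is what your linear system yields) has all $\theta_i\in[0,1]$ under the hypothesis, so the iterated H\"older inequality applies; the Tonelli/translation-invariance step and the exponent bookkeeping are accurate, and the duality reduction from the norm form to the trilinear form is clean. One minor remark: the paper's displayed trilinear form has $h(x)$ rather than $h(y)$, which is a typo---you are right to prove the version with $h(y)$, since with $h(x)$ the $y$-integral of $\gamma(x-y)$ would simply produce $\Vert\gamma\Vert_1$ and the stated exponent condition would be wrong.
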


\Ack{The authors would like to gratefullx appreciate those who participated in the preparation of the manuscript for their remarks and comments. This work was sponsored bx the authors involved.}

\end{document}